\numberwithin{equation}{section}
 \newcommand{\kommentar}[1]{}
\newtheorem{thm}{Theorem}[section]
\newtheorem{coro}[thm]{Corollary}
\newtheorem{lem}[thm]{Lemma}
\newcommand{\dd}{\;\mathrm{d}}
\theoremstyle{remark}
\newtheorem{rem}[thm]{Remark}
\title[Non-vanishing and One Level Density for Dirichlet $L$-functions]{Non-vanishing and One Level Density for Dirichlet $L$-functions Along Short Averages}
\author{Debmalya Basak}
\address{
Debmalya Basak : Max Planck Institute for Mathematics,
Vivatsgasse 7, 53111 Bonn, Germany}
\email{basakd@mpim-bonn.mpg.de}
\begin{document}
\nocite{*}
\setcounter{tocdepth}{1}
\subjclass[2020]{Primary: 11M06. Secondary: 11L05, 11L07, 11M26. \\ \indent \textit{Keywords and phrases}: Dirichlet $L$-functions, non-vanishing at central point, one-level density, Kloosterman sums, averages over short intervals.}
\begin{abstract}
Assuming the Generalized Riemann Hypothesis, it is known that at least half of the central values $L(\frac{1}{2},\chi)$ are non-vanishing as $\chi$ ranges over primitive characters modulo $q$. Unconditionally, this is known on average over both $\chi$ modulo $q$ and $Q/2 \leq q \leq 2Q$. We prove that for any $\delta>0$, there exist $\eta_1,\eta_2>0$ depending on $\delta$ such that the non-vanishing proportion for $L(\frac{1}{2},\chi)$ as $\chi$ ranges modulo $q$ with $q$ varying in short intervals of size $Q^{1-\eta_1}$ around $Q$ and in arithmetic progressions with moduli up to $Q^{\eta_2}$  is larger than $\frac{1}{2}-\delta$. Furthermore, by studying the one-level density of low-lying zeros of $L(s, \chi)$, we show that under the Generalized Riemann Hypothesis, non-vanishing proportions exceeding $\frac{1}{2}$ can be obtained while still averaging over short ranges of $q$. 
\end{abstract}
\maketitle
\setcounter{tocdepth}{1}
\section{Introduction}
\subsection{Motivation} The study of non-vanishing of $L$-functions at the central point \(s = \frac{1}{2}\) is a fundamental problem in number theory with profound arithmetic implications. For instance, the Birch and Swinnerton-Dyer conjecture links the order of the central zero of an elliptic curve $L$-function with its arithmetic rank. Iwaniec and Sarnak \cite{IS2000} provided another crucial insight by showing that at least 50\% of \(L\)-functions in certain families of cusp forms do not vanish at the central point, and that any improvement on this proportion would rule out the existence of Landau--Siegel zeros. The mollifier method has been instrumental in achieving the majority of results concerning the non-vanishing of $L$-functions. For notable applications of the mollifier method, the reader is referred to the works of Bohr--Landau \cite{BL1914}, Bui \cite{B2012}, Conrey \cite{C1989}, Iwaniec--Sarnak \cite{IS2000}, Khan--Milićević--Ngo \cite{KMN2022}, Khan--Ngo \cite{KN2016}, Michel--VanderKam \cite{MV2000}, Selberg \cite{S1946} and Soundararajan \cite{KS1995, KS2000}. Additionally, for exploring connections between the non-vanishing of $L$-functions and Landau--Siegel zeros, the reader is referred to the works of Bui, Pratt, and Zaharescu \cite{BPZ2021, BPZ2024}, \v{C}ech--Matomäki \cite{CM2024}, Iwaniec--Sarnak \cite{IS2000} and the references there-in.
\par
In this paper, we investigate the classical problem of non-vanishing for primitive Dirichlet \(L\)-functions. It is widely conjectured that no such \(L\)-function \(L(s, \chi)\) vanishes at the central point. One may refer to Chowla \cite{C1965} for a variant of this conjecture pertaining to quadratic Dirichlet characters. Balasubramanian and Murty \cite{BM1992} showed unconditionally that a positive proportion of the family of $L$-functions associated to primitive characters modulo \(q\) do not vanish at the central point. Iwaniec and Sarnak \cite{IS1999} established that at least one third of the \(L\)-functions in this family do not vanish at the central point. Michel and VanderKam \cite{MV2000} achieved the same proportion of non-vanishing by introducing a new twisted mollifier. Bui \cite{B2012} improved this to 34.11\%, and this was subsequently raised to $\tfrac{7}{19}$ by Qin and Wu \cite{qin2025nonvanishingdirichletlfunctionscentral}, which is currently the best known result for non-prime moduli $q$. For prime moduli, the works of Khan and Ngo \cite{KN2016}, and Khan, Milićević and Ngo \cite{KMN2022} show that the non-vanishing proportion can be improved to $\frac{5}{13}$. Considering the family of quadratic Dirichlet characters,  Soundararajan \cite{KS2000} showed that at least $\frac{7}{8}$ of these \(L\)-functions do not vanish at \(s = \frac{1}{2}\). More recently, Baluyot and Pratt \cite{BP2022} considered the case of quadratic Dirichlet characters with prime moduli and established that more than nine percent of the central values of these $L$-functions are non-zero. 
\par
Assuming the Generalized Riemann Hypothesis (GRH), it is known (see Balasubramanian--Murty \cite{BM1992}, Sica \cite{S1998}) that at least half of the primitive characters \(\chi \bmod q\) satisfy \(L(\tfrac{1}{2}, \chi ) \neq 0\). Unconditionally, it is reasonable to anticipate a higher proportion of non-vanishing than the existing results by incorporating an additional averaging over the moduli \(q\). Indeed, Iwaniec and Sarnak \cite{IS1999} asserted that by averaging over the moduli, one can show unconditionally that at least half of the central values are non-zero. In this context, Pratt \cite{P2019} obtained a non-vanishing proportion exceeding $\tfrac{1}{2}$ among the family of primitive characters modulo $q$ with $Q/2 \leq q \leq 2Q$. It was clarified in private communications with Pratt, Matomäki, and \v{C}ech that the proof in \cite{P2019} only produces a non-vanishing proportion of $\tfrac{1}{2}$. See \cite[Abstract]{P2024} for reference.

The study of non-vanishing of $L$-functions at the central point is closely related to the one-level density of their low-lying zeros, that is, zeros near the central point. The Katz--Sarnak philosophy predicts that for any smooth test function $\phi$ and any natural family $\mathcal{F}$ of automorphic objects,
\begin{align}\label{Eq: OLD}
    \frac{1}{\#\mathcal{F}} 
    \sum_{\pi \in \mathcal{F}} 
    \sum_{\gamma_{\pi}} 
    \phi\!\left( 
        \frac{\log \mathfrak{c}_{\pi}}{2\pi} \, \gamma_{\pi}
    \right)
    \ \longrightarrow\ 
    \int_{\mathbb{R}} \phi(x)\, K_{\mathcal{F}}(x)\, \mathrm{d}x
    \qquad \textrm{as} \quad \#\mathcal{F} \to \infty,
\end{align}
where $\gamma_{\pi}$ runs over ordinates of the zeros of the $L$-function associated to $\pi$, $\mathfrak{c}_{\pi}$ denotes its analytic conductor, and $K_{\mathcal{F}}(x)$ is a function determined by the symmetry type of the family. For further background on this topic, we refer the reader to the works of Katz and Sarnak \cite{Katz--Sarnak, Katz--Sarnak2} and Iwaniec--Luo--Sarnak \cite{Iwaniec--Luo--Sarnak}.

There has been extensive work over the years providing evidence for \eqref{Eq: OLD}. In all such results, one needs to impose restrictions on the support of the Fourier transform of $\phi$, due to limitations arising from the relevant trace formulas. A major challenge is to enlarge this support beyond the “diagonal’’ range, that is, the range accessible through a direct application of the trace formula. Progress in this direction typically relies on deep hypotheses concerning the distribution of primes, often extending beyond GRH. For example, Iwaniec, Luo, and Sarnak \cite{Iwaniec--Luo--Sarnak} proved that for holomorphic cusp forms of even weight, one can unconditionally obtain one-level density results for $\operatorname{supp}\widehat{\phi} \subset (-1,1)$, which can be extended to $(-2,2)$ under GRH. They also showed that the support can be further enlarged to $\left(-\tfrac{22}{9},\,\tfrac{22}{9}\right)$ under an additional deep hypothesis concerning bounds for exponential sums over primes in arithmetic progressions.
\par
A notable breakthrough in this direction was recently obtained by Drappeau, Pratt, and Radziwi{\l}{\l} \cite{DPR2023}, who provided the first example of a family of $L$-functions for which one can unconditionally enlarge the support of the Fourier transform beyond the ``diagonal'' range. Considering the family of primitive Dirichlet characters $\chi \bmod q$, they proved that one may take
\[
\operatorname{supp}\widehat{\phi} \subset \left(-2-\frac{50}{1093},\, 2+\frac{50}{1093}\right)
\]
by incorporating an additional averaging over the moduli $q$ in the range $Q/2 \le q \le 3Q$. This extra averaging over $q$ is an essential ingredient in their method.

\par
In both the contexts of non-vanishing and one-level densities, the additional averaging over the moduli $q$ significantly enlarges the size of the family of $L$-functions under consideration. Rather than a family of roughly $q$ $L$-functions of conductor $q$, one obtains a family of size approximately $Q^2$, with conductors of size around $Q$. Motivated by Chowla's conjecture which addresses the non-vanishing problem for each individual \(q\), it is natural to try and obtain results on average over smaller sets of moduli $q$. Our first objective is to show that even when the averaging is restricted to a shorter range (by which we mean a power savings in the number of $q$'s used around $Q$), unconditionally one can still obtain non-vanishing proportions close to $\tfrac{1}{2}$. Secondly, we establish one-level density results for primitive Dirichlet $L$-functions with support extending beyond the ``so-called" diagonal range $(-2,2)$, while still averaging over short ranges of $q$. As an application, assuming GRH, we demonstrate that non-vanishing proportions exceeding $\tfrac{1}{2}$ can be achieved even with short-range averaging over the moduli. Finally, we also extend our results to families where the moduli $q$ are restricted to various arithmetic progressions.

\subsection{Main Results} To state our main results, we introduce some notations. Let $q$ be a positive integer. Let $\chi$ be a Dirichlet character modulo $q$ and let
\[L(s, \chi)=\sum_{n=1}^\infty \dfrac{\chi(n)}{n^s}, \quad \operatorname{Re}(s)>1, \]
be the associated Dirichlet $L$-function. Suppose $Q>0$ be sufficiently large, $\eta_1,\eta_2 \geq 0$ and let $T=Q^{\eta_1}$. We will work with the function $\mathcal{H}_T$ periodic mod 1 and which on $[-\frac{1}{2},\frac{1}{2}]$ is given by
\begin{align}\label{Defining h}
\mathcal{H}_T(t)= \begin{cases}T(1-T|t|), & |t| \leq \frac{1}{T}, \\ 0, & \frac{1}{T} \leq |t| \leq \frac{1}{2} .\end{cases}
\end{align} 
Later on, we will break $\mathcal{H}_T(t)$ as a sum of two parts having special properties as far as their sizes and Fourier coefficients are concerned.  Our first result is the following.
\begin{thm}\label{Main Theorem}
Let \(\eta_1, \eta_2 \geq 0\) be fixed such that 
\begin{align}\label{Eta 1-2 Constraint}
9\eta_1 + \eta_2 < \frac{1}{16}. 
\end{align}
Then there exists a constant \(c(\eta_1, \eta_2) > 0\) depending only on \(\eta_1\) and \(\eta_2\) with the following property. Let \(\Psi\) be a fixed nonnegative smooth function compactly supported in \([\frac{1}{2},\frac{3}{2}]\) with \(\Psi(1) > 0\). Consider \(a,D \textrm{ and } Q\in \mathbb{N}\) such that $1 \leq a \leq D \leq Q^{\eta_2}$ with \(\gcd(a, D) = 1\). Let $\varepsilon>0$. Then for \(Q\) sufficiently large in terms of $\eta_1, \eta_2$ and $\varepsilon$,
\begin{align}\label{Main Theorem Inequality}
\sum_{q \equiv a \bmod D} \Psi \bigg(\frac{q}{Q}\bigg) \mathcal{H}_{Q^{\eta_1}} &\bigg( \frac{q}{Q}\bigg) \frac{q}{\varphi(q)} \sum_{\substack{\chi\bmod q\\ \chi\:  \textup{primitive}\\ L(\frac{1}{2}, \chi)\neq 0}}1 \notag \\
&\geq (c(\eta_1, \eta_2)-\varepsilon) \sum_{q \equiv a \bmod D} \Psi \bigg( \frac{q}{Q}\bigg) \mathcal{H}_{Q^{\eta_1}} \bigg( \frac{q}{Q}\bigg) \frac{q}{\varphi(q)}\sum_{\substack{\chi\bmod q\\ \chi\: \textup{primitive}}}1,
\end{align}
where \(\mathcal{H}_{Q^{\eta_1}}(t)\) is defined by \eqref{Defining h}. Moreover,
\begin{align}\label{Limit Result}
\lim_{\substack{\eta_1 \to 0 \\ \eta_2 \to 0}} c(\eta_1, \eta_2) = \frac{1}{2}.
\end{align}
\end{thm}
In the context of low-lying zeros of primitive Dirichlet $L$-functions and their one-level density, we prove the following result. 
\begin{thm}\label{Main Theorem A}
Let \(\eta_1, \eta_2 \geq 0\) be fixed such that 
\begin{align}\label{Eta 1-2 Constraint A}
86\eta_1 + 6\eta_2 < \frac{50}{1093}. 
\end{align}
Then there exists a constant \(\widetilde{c}(\eta_1, \eta_2) > 0\) depending only on \(\eta_1\) and \(\eta_2\) with the following property. Let \(\Psi\) be a fixed nonnegative smooth function compactly supported in \([\frac{1}{2}, \frac{3}{2}]\) with \(\Psi(1) > 0\). Suppose $\phi$ be a nonnegative smooth function such that $\textrm{supp } \widehat{\phi} \subset (-2-\widetilde{c}(\eta_1,\eta_2),2+\widetilde{c}(\eta_1, \eta_2))$. Consider \(a,D, \textrm{ and } Q \in \mathbb{N}\) such that $1 \leq a \leq D \leq Q^{\eta_2}$ with \(\gcd(a, D) = 1\). Then as $Q \to \infty$,
\begin{align}\label{Main Theorem Inequality A}
\sum_{q \equiv a \bmod D} \Psi \bigg(\frac{q}{Q}\bigg) \mathcal{H}_{Q^{\eta_1}}\bigg(\frac{q}{Q}\bigg) &\sum_{\substack{\chi\bmod q\\ \chi\:  \textup{primitive}}} \sum_{\gamma_{\chi}} \phi\left ( \frac{\log Q}{2 \pi} \gamma_{\chi} \right) \notag \\
&= \widehat{\phi}(0) \sum_{q \equiv a \bmod D} \Psi \bigg( \frac{q}{Q}\bigg) \mathcal{H}_{Q^{\eta_1}} \bigg( \frac{q}{Q}\bigg) \sum_{\substack{\chi\bmod q\\ \chi\: \textup{primitive}}}1+o(Q^2D^{-1}),
\end{align}
where \(\mathcal{H}_{Q^{\eta_1}}(t)\) is defined by \eqref{Defining h} and $\frac{1}{2}+i\gamma_{\chi}$ denotes the non-trivial zeros of $L(s,\chi)$. Since we do not assume the Generalized Riemann Hypothesis, the $\gamma_{\chi}$'s are allowed to be complex. Moreover,
\begin{align}\label{Limit Result A}
\lim_{\substack{\eta_1 \to 0 \\ \eta_2 \to 0}} \widetilde{c}(\eta_1, \eta_2) = \frac{50}{1093}.
\end{align}
\end{thm}
\begin{rem}
In Theorem~\ref{Main Theorem A}, the function $\widehat{\phi}$ is compactly supported, which implies that $\phi$, originally defined on $\mathbb{R}$, admits an analytic continuation to the entire complex plane. Moreover, the conclusion of Theorem~\ref{Main Theorem A} remains valid for any smooth function $\phi$ satisfying $\operatorname{supp}\widehat{\phi} \subset (-2-\widetilde{c}(\eta_1,\eta_2),\,2+\widetilde{c}(\eta_1,\eta_2))$, provided there exists a nonnegative smooth function $\phi_1$ with $\operatorname{supp}\widehat{\phi}_1 \subset (-2-\widetilde{c}(\eta_1,\eta_2),\,2+\widetilde{c}(\eta_1,\eta_2))$ such that $\phi_1(x) \geq \phi(x)$ for all $x \in \mathbb{R}$.
\end{rem}
The one-level density estimate from Theorem~\ref{Main Theorem A} allows us to improve Theorem~\ref{Main Theorem} under GRH, and thereby surpass the $\tfrac{1}{2}$ threshold for non-vanishing while still considering short averages over $q$.

\begin{thm}\label{Main Theorem C}
Let \(\eta_1, \eta_2 \geq 0\) be fixed such that 
\begin{align}\label{Eta 1-2 Constraint C}
86\eta_1 + 6\eta_2 < \frac{50}{1093}. 
\end{align}
Then there exists a constant \(c^{*}(\eta_1, \eta_2) > 0\) depending only on \(\eta_1\) and \(\eta_2\) with the following property. Let \(\Psi\) be a fixed nonnegative smooth function compactly supported in \([\frac{1}{2}, \frac{3}{2}]\) with \(\Psi(1) > 0\). Consider \(a,D, \textrm{ and } Q \in \mathbb{N}\) such that $1 \leq a \leq D \leq Q^{\eta_2}$ with \(\gcd(a, D) = 1\).  Then for \(Q\) sufficiently large in terms of $\eta_1, \eta_2$ and $\varepsilon$,
\begin{align}\label{Main Theorem Inequality C}
\sum_{q \equiv a \bmod D} \Psi \bigg(\frac{q}{Q}\bigg) \mathcal{H}_{Q^{\eta_1}} &\bigg( \frac{q}{Q}\bigg) \sum_{\substack{\chi\bmod q\\ \chi\:  \textup{primitive}\\ L(\frac{1}{2}, \chi)\neq 0}}1 \notag \\
&\geq (\frac{1}{2}+c^{*}(\eta_1, \eta_2)-\varepsilon) \sum_{q \equiv a \bmod D} \Psi \bigg( \frac{q}{Q}\bigg) \mathcal{H}_{Q^{\eta_1}} \bigg( \frac{q}{Q}\bigg) \sum_{\substack{\chi\bmod q\\ \chi\: \textup{primitive}}}1,
\end{align}
where \(\mathcal{H}_{Q^{\eta_1}}(t)\) is defined by \eqref{Defining h}. Moreover,
\begin{align}\label{Limit Result C}
\lim_{\substack{\eta_1 \to 0 \\ \eta_2 \to 0}} c^{*}(\eta_1, \eta_2) = \frac{25}{2236}.
\end{align}
\end{thm}
\begin{rem}
The sum over $q$ in \eqref{Main Theorem Inequality}, \eqref{Main Theorem Inequality A} and \eqref{Main Theorem Inequality C} ranges in short intervals and also in arithmetic progressions. More precisely, it is supported in the range $\lvert q-Q \rvert \leq Q^{1-\eta_1}$ with $q \equiv a \bmod D$. Therefore, with the choice $D= \lfloor Q^{\eta_2} \rfloor$, the size of the family of primitive Dirichlet $L$ functions under consideration is reduced to around $Q^{2-\eta_1-\eta_2}$. 
\end{rem}
\begin{rem}
Our proofs provide explicit rational functions for $c(\eta_1,\eta_2)$, $\widetilde{c}(\eta_1,\eta_2)$ and $c^{*}(\eta_1,\eta_2)$ in terms of $\eta_1$ and $\eta_2$. See \eqref{C_eta definition}, \eqref{ceta definition A} and \eqref{c_3 definition} below.
\end{rem}

\subsection{Outline of the Proofs}
The central idea in the proofs of Theorem \ref{Main Theorem} and \ref{Main Theorem A} is to introduce an exponential oscillator along with the smooth function $\Psi$. The extra exponential oscillator helps us in detecting short averages of the moduli $q$. The proof of Theorem \ref{Main Theorem} makes use of the Iwaniec–Sarnak and Michel–VanderKam mollifiers, refined to incorporate the additional exponential oscillation. On the other hand, the proof of Theorem \ref{Main Theorem A} is based on Linnik's dispersion method, further adapted to handle the short interval averaging over the moduli. In both arguments, a variant of Drappeau’s result on Deshouillers–Iwaniec type bounds for Kloosterman sums is applied to control certain exponential sums where the moduli $q$ vary in short intervals and in arithmetic progressions.
\subsection{Structure of the Paper}
In Section \ref{sec: Initial Steps}, we briefly discuss the mollifier method and begin with some preliminaries towards the proof of Theorem \ref{Main Theorem}. Section \ref{sec: Drappeau Result} is devoted to bounds on sums of Kloosterman sums in residue classes. The error terms arising from the mollified second moment are addressed using different analytic techniques in Section \ref{sec: Lemma 2 Part II}. In Section \ref{Sec: Kloosterman Sum Applications}, we apply the bounds on Kloosterman sums obtained in Section \ref{sec: Drappeau Result} to treat certain exponential sums and prove Theorem \ref{Main Theorem}. In Section \ref{Theorem 2 Initial Setup}, we discuss the initial setup for the proof of Theorem \ref{Main Theorem A}. Section \ref{sec: Dispersion} is dedicated to the dispersion method, and finally in Section \ref{sec: Proof of Theorem A: Final Details}, we provide the proofs of Theorem \ref{Main Theorem A} and \ref{Main Theorem C}.
\subsection{Notations}
We employ some standard notation that will be used throughout the paper.
\begin{itemize} 
\item The expressions $f(X)=\mathcal{O}(g(X))$, $f(X) \ll g(X)$, and $g(X) \gg f(X)$ are equivalent to the statement that $|f(X)| \leq C|g(X)|$ for all sufficiently large $X$, where $C>0$ is an absolute constant. A subscript of the form $\ll_{\alpha}$ means the implied constant may depend on the parameter $\alpha$. Dependence on several parameters is indicated similarly, as in $\ll_{\alpha, \lambda}$.
\item The function $\varphi$ denotes the Euler totient function. The notation $\mathrm{e}(x)$ stands for $\exp(2\pi i x)$. The function $\tau$ denotes the divisor function. The function $\Lambda$ denotes the von Mangoldt function.
\item For a variable $a$ and a parameter $A$, the notation $a \sim A$ implies that $A \leq a \leq 2A$. The notation $a \asymp A$ implies that there exist absolute constants $c_1,c_2>0$ such that $c_1A \leq a \leq c_2A$.
\item We write $(m,n^{\infty})$ to denote the product $\prod_{p^{\nu} \mid \mid m, p \mid n} p^{\nu}.$ For $d \in \mathbb{N}$, we define $\operatorname{ker}(d) = \prod_{p \mid d} p$. We also write $d \mid n^{\infty}$ if $\operatorname{ker}(d) \mid n$. We use $(m,n)$ and $[m,n]$ to denote the gcd and lcm of $m$ and $n$ respectively.
\item The notation $\sum_{\chi \bmod q}^{+}$ indicates that the sum is restricted to even primitive characters $\chi \bmod q$.
\item We define the Fourier transform $f$ of a function $f$ as
$$
\widehat{f}(\xi)=\int_{\mathbb{R}} f(t) \mathrm{e}(-\xi t) \dd t.
$$
If $f$ is smooth and compactly supported, the above is well-defined and we have
$$
f(t)=\int_{\mathbb{R}} \widehat{f}(\xi) \mathrm{e}(\xi t) \dd \xi .
$$
\item We denote by $\varepsilon$ an arbitrarily small positive quantity that may vary from one line to the next, or even within the same line. Thus we may write $X^{2\varepsilon} \leq X^{\varepsilon}$ with no reservations.
\end{itemize}

\section{Proof of Theorem \ref{Main Theorem}: Initial Setup} \label{sec: Initial Steps}
\subsection{A Survey of the Mollifier Method} 
The proof of Theorem \ref{Main Theorem} begins with the well-known mollifier method. To keep our discussion self-contained, we briefly review the mollification technique. From the work of Iwaniec--Sarnak \cite{IS1999}, it follows that
\begin{align*}
\sum_{\substack{\chi\bmod q\\ \chi\: \textup{primitive}}} L(\tfrac{1}{2},\chi ) &\sim \sum_{\substack{\chi\bmod q\\ \chi\: \textup{primitive}}}1, \\
\textrm{and} \quad \sum_{\substack{\chi\bmod q\\ \chi\: \textup{primitive}}}  \lvert L(\tfrac{1}{2},\chi ) \rvert^2 &\sim \frac{\varphi(q)}{q} \log q\sum_{\substack{\chi\bmod q\\ \chi\: \textup{primitive}}}1.
\end{align*}
The additional logarithmic factor in the second moment indicates significant variation in the magnitude of $L(\frac{1}{2},\chi)$. Therefore for each character $\chi$, we associate a function $\mathcal{M}(\chi)$ of the form
$$
\mathcal{M}(\chi)=\sum_{m \leq y} \frac{x_m \chi(m)}{\sqrt{m}}
$$
where $X=\left(x_m\right)$ is a sequence of real numbers supported on $1 \leq m \leq y$ with $x_1=1$ and $x_m \ll 1$. The function $\mathcal{M}(\chi)$, known as a mollifier, is designed to smooth out or ``mollify'' the large values of $L\left(\frac{1}{2}, \chi\right)$ when averaging over $\chi$. Consider the sums
$$
\mathcal{S}_1=\sum_{\substack{\chi\bmod q\\ \chi\: \textup{primitive}}}L(\tfrac{1}{2}, \chi ) \mathcal{M}(\chi) \quad \text { and } \quad \mathcal{S}_2=\sum_{\substack{\chi\bmod q\\ \chi\: \textup{primitive}}}\left|L(\tfrac{1}{2}, \chi )\mathcal{M}(\chi)\right|^2.
$$
The Cauchy--Schwarz inequality yields
$$
\sum_{\substack{\chi\bmod q\\ \chi\: \textup{primitive}, \, L\left(\frac{1}{2}, \chi\right) \neq 0}}  1 \geq \frac{\left|\mathcal{S}_1\right|^2}{\mathcal{S}_2}.
$$
To optimize the ratio $\left|\mathcal{S}_1\right|^2/\mathcal{S}_2$ with respect to the sequence $X=\left(x_m\right)$, Iwaniec and Sarnak chose
$$
\mathcal{M}(\chi)=\sum_{m \leq y} \frac{\mu(m) \chi(m)}{\sqrt{m}}\left(1-\frac{\log m}{\log y}\right)
$$
where $y=q^{\vartheta}$ with $0<\vartheta<\frac{1}{2}$. With this choice, they established a non-vanishing proportion of $\tfrac{1}{3}$ among the family of primitive characters $\chi$ modulo $q$.  
\par
For $i \in \{ 1,2\}$, let $y_i = q^{\vartheta_i}$, and let $P_i$ be smooth polynomials satisfying $P_i(0)=0$ and $P_i(1)=1$. Building on the work of Iwaniec and Sarnak, and motivated by the approximate functional equation for $L(s,\chi)$, Michel and VanderKam \cite{MV2000} considered a two piece mollifier of the form:
\begin{align}\label{MV-Mollifier Definition}
\mathcal{M}(\chi)=\mathcal{M}_{\mathrm{IS}}(\chi)+\mathcal{M}_{\mathrm{MV}}(\chi),
\end{align}
where 
\begin{align}
\mathcal{M}_{\mathrm{IS}}(\chi) &= \sum_{\ell \leq y_1} \frac{\mu(\ell) \chi(\ell)}{\sqrt{\ell}}P_1\left(\frac{\log (y_1/\ell)}{\log y_1} \right ), \label{IS Mollifier} \\
\mathcal{M}_{\mathrm{MV}}(\chi) &= \varepsilon(\overline{\chi}) \sum_{\ell \leq y_2} \frac{\mu(\ell)\overline{\chi(\ell)}}{\sqrt{\ell}}P_2\left(\frac{\log (y_2/\ell)}{\log y_2} \right ) \label{MV Mollifier},\\
\noalign{\noindent\hspace*{0pt}\text{and}} \varepsilon(\chi) &= \frac{1}{\sqrt{q}} \sum_{h \bmod q} \chi(h) \mathrm{e}\left(\frac{h}{q}\right) \label{Epsilon_chi definition}.
\end{align}
It is worth noting that Soundararajan \cite{KS1995} previously utilized a mollifier of this form to study mean values of the Riemann zeta function. The detailed analysis of the off-diagonal terms arising from this two piece mollifier imposes the restriction $\vartheta_1+\vartheta_2 <\tfrac{1}{2}$. With this constraint, the non-vanishing proportion obtained is still $\tfrac{1}{3}$. Bui \cite{B2012} improved this proportion to 34.11\% by considering the two piece mollifier
\begin{align}\label{Bui Mollifier 1}
\mathcal{M}(\chi)=\mathcal{M}_{\mathrm{IS}}(\chi)+\mathcal{M}_{\mathrm{B}}(\chi),
\end{align}
where $\mathcal{M}_{\mathrm{IS}}(\chi)$ is defined by \eqref{IS Mollifier} and
\begin{align}\label{Bui Mollifier 2}
\mathcal{M}_{\mathrm{B}}(\chi)=\frac{1}{\log q} \sum_{m n \leq y_2} \frac{(\log * \mu)(m) \mu(n) \overline{\chi}(m) \chi(n)}{\sqrt{m n}} P_2\left(\frac{\log (y_2/mn)}{\log y_2} \right).
\end{align}
Recently, Qin and Wu \cite{qin2025nonvanishingdirichletlfunctionscentral} established an average estimate for a quartic sum of Kloosterman sums, thereby improving the proportion to $\frac{7}{19}$, which is currently the best known result for a non-prime moduli $q$. For $q$ prime, Khan, Milićević and Ngo \cite{KMN2022} showed that using an unbalanced mollifier of the form
\begin{align}
\mathcal{M}(\chi)=c_1\mathcal{M}_{\mathrm{IS}}(\chi)+c_2\mathcal{M}_{\mathrm{MV}}(\chi),
\end{align}
where $c_1,c_2 >0$, a non-vanishing proportion of $\tfrac{5}{13}$ can be achieved. This remains the best known proportion for prime moduli, although it is still far from the proportion $\tfrac{1}{2}$ known under GRH.
\par
Unconditionally, we may hope to establish higher proportions of non-vanishing than the existing results by averaging over the moduli $q$. In this context, in \cite{P2019}, Pratt considered the three piece mollifier
\begin{align}\label{Pratt Mollifier}
\mathcal{M}(\chi)=\mathcal{M}_{\mathrm{IS}}(\chi)+\mathcal{M}_{\mathrm{B}}(\chi)+\mathcal{M}_{\mathrm{MV}}(\chi),
\end{align}
where $\mathcal{M}_{\mathrm{IS}}(\chi), \mathcal{M}_{\mathrm{B}}$ and $\mathcal{M}_{\mathrm{MV}}(\chi)$ are defined by \eqref{IS Mollifier}, \eqref{Bui Mollifier 2} and \eqref{MV Mollifier} respectively. See also \cite{P2024} for an update to \cite{P2019}. We also highlight the recent work of \v{C}ech and Matomäki \cite{cech2025optimalitymollifiers}, which studies the optimality of mollifiers in a very broad setting. In particular, they show that the Michel–Vanderkam mollifier is optimal within a wide class of balanced two-piece mollifiers. In the present paper, we will work with a two piece mollifier as in \eqref{MV-Mollifier Definition}.
\subsection{The Mollified Moments}\label{subsection : Notations} In what follows, we lay some preliminary groundwork for the proof of Theorem \ref{Main Theorem}. Let $\Psi$ be a fixed nonnegative smooth function compactly supported in $[\frac{1}{2},\frac{3}{2}]$ which satisfies $\Psi(1)>0$. Let \(\eta_1, \eta_2 \geq 0\) be fixed satisfying 
\[
0 \leq 9 \eta_1 + \eta_2 < \frac{1}{16}.\]
Let $Q \in \mathbb{N}$ be sufficiently large. Define $T=Q^{\eta_1}$ and consider \(a,D \in \mathbb{N}\) such that $1 \leq a \leq D \leq Q^{\eta_2}$ with \((a, D) = 1\). Define the set \(\mathcal{Q}(a, D) = \{ q \in \mathbb{N} : q \equiv a \bmod D \}\). Let $\mathcal{H}_{T}(t)$ be given by \eqref{Defining h}. By Fourier expansion, we have
\begin{align}\label{Fourier Expansion}
\mathcal{H}_T(t)=\sum_{k \in \mathbb{Z}} b(k) \mathrm{e}(k t)
\end{align}
where $\mathrm{e}(x) = \exp(2\pi i x)$, $b(0)=1$ and for any $k \neq 0$,
\begin{align*}
b(k)=\frac{T^2}{\pi^2 k^2} \sin ^2\left(\frac{\pi k}{T}\right) .
\end{align*}
Note that for $|k| \leq T / 2$, we have $\left|b(k)\right| \gg 1$. Fix $\varepsilon>0$ arbitrarily small and set 
\begin{align}\label{Choice of K}
K=Q^{2\eta_1+\varepsilon}.
\end{align}
We write 
\begin{align}\label{Breaking H}
\Phi(t) = \Psi(t)\mathcal{H}_T(t) &= \sum_{\lvert k \rvert \leq K }b(k) \Psi(t)\mathrm{e}(k t)+\sum_{\lvert k \rvert > K }b(k) \Psi(t) \mathrm{e}(k t)=\Phi_1(t)+\Phi_2(t),
\end{align}
say. We will show later in Section \ref{Sec: Kloosterman Sum Applications} that the contribution from $\Phi_2(t)$ to both sides of \eqref{Main Theorem Inequality} is small. Consequently, the bulk of our analysis will focus on $\Phi_1(t)$.
\par
We say a Dirichlet character $\chi \bmod q$ is even if $\chi(-1)=1$. Let $\mathscr{C}_q$ denote the set of primitive characters $\chi \bmod q$ and let $\mathscr{C}_q^{+}$ be the subset of even characters in $\mathscr{C}_q$. We write $\varphi^{+}(q)=\frac{1}{2} \varphi^*(q)$ where
\begin{align}\label{Primitive Characters Count}
\varphi^*(q)=\sum_{k \mid q} \varphi(k) \mu \left(\frac{q}{k}\right)=\left|\mathscr{C}_q\right|,
\end{align}
(see \cite[Lemma 4.1]{BM2011} for a proof of \eqref{Primitive Characters Count}). It is not difficult to show that $\left|\mathscr{C}_q^{+}\right|=$ $\varphi^{+}(q)+\mathcal{O}(1)$. In addition, we use the notation $\sum_{\chi \bmod q}^{+}$ to indicate that the summation is restricted to $\chi \in \mathscr{C}_q^{+}$. Our analysis will focus on even primitive characters. The odd characters can be addressed using a parallel argument. Let $y_i = Q^{\vartheta_i}$ for $i \in \{ 1,2\}$. We choose our mollifier $\mathcal{M}(\chi)$ as
\begin{align}\label{Mollifier Definition}
\mathcal{M}(\chi)=\mathcal{M}_{\mathrm{IS}}(\chi)+\mathcal{M}_{\mathrm{MV}}(\chi),
\end{align}
where $\mathcal{M}_{\mathrm{IS}}(\chi)$ and $\mathcal{M}_{\mathrm{MV}}(\chi)$ are defined by \eqref{IS Mollifier} and \eqref{MV Mollifier} respectively. Consider the sums 
\begin{align}
\mathcal{S}_{1} &=  \sum_{q \in \mathcal{Q}(a,D)} \Phi \left ( \frac{q}{Q}\right) \dfrac{q}{\varphi(q)}\hspace{0.2cm} \sideset{}{^+}\sum_{\chi \bmod q}L(\tfrac{1}{2}, \chi ) \mathcal{M}(\chi), \label{1st Moment}\\
\textrm{and} \quad \mathcal{S}_{2} &= \sum_{q \in \mathcal{Q}(a,D)} \Phi\left ( \frac{q}{Q}\right) \dfrac{q}{\varphi(q)} \hspace{0.2cm} \sideset{}{^+}\sum_{\chi \bmod q}\left|L(\tfrac{1}{2}, \chi )\mathcal{M}(\chi)\right|^2 \label{2nd Moment}.
\end{align}
By the Cauchy--Schwarz inequality, we obtain
\begin{align}\label{CS Inequality}
\bigg \lvert \sum_{q \in \mathcal{Q}(a,D)} \Phi\left ( \frac{q}{Q}\right) \dfrac{q}{\varphi(q)}\hspace{0.2cm}\sideset{}{^+}\sum_{\substack{\chi \bmod q \\ L(\frac{1}{2}, \chi) \neq 0}}1 \bigg \rvert \geq \frac{\lvert \mathcal{S}_{1} \rvert^2}{\lvert \mathcal{S}_{2} \rvert }.
\end{align}
Hence the proof of Theorem \ref{Main Theorem} primarily relies on deriving asymptotic estimates for $\mathcal{S}_1$ and $\mathcal{S}_2$.
\begin{lem}\label{Lemma 1st Moment}
Let \(\eta_1, \eta_2 \geq 0\) be fixed, and let $\Phi(t)$ be given by \eqref{Breaking H}. Suppose $0<\vartheta_1,\vartheta_2 <\frac{1}{2}$. Then
\[
\mathcal{S}_{1} = \left( P_1(1)+P_2(1)+o(1) \right ) \sum_{q \in \mathcal{Q}(a,D)} \Phi \left ( \frac{q}{Q}\right)\dfrac{q}{\varphi(q)} \varphi^{+}(q),  
\]
where $\mathcal{S}_1$ is defined in \eqref{1st Moment} and we denote $P_i\left(\frac{\log (y_i/x)}{\log y_i} \right )$ as $ P_i[x]$.
\end{lem}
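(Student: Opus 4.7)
The plan is to apply the approximate functional equation for $L(\tfrac{1}{2},\chi)$ for even primitive $\chi \bmod q$, namely
\begin{align*}
L(\tfrac{1}{2},\chi) = \sum_{n\geq 1}\frac{\chi(n)}{\sqrt n}\,V_{+}\!\left(\tfrac{n}{\sqrt{q/\pi}}\right) + \varepsilon(\chi)\sum_{n\geq 1}\frac{\overline{\chi}(n)}{\sqrt n}\,V_{+}\!\left(\tfrac{n}{\sqrt{q/\pi}}\right),
\end{align*}
where $V_{+}$ is a smooth cutoff of rapid decay satisfying $V_{+}(0)=1$, and to substitute it into \eqref{1st Moment} together with $\mathcal{M}(\chi)=\mathcal{M}_{\mathrm{IS}}(\chi)+\mathcal{M}_{\mathrm{MV}}(\chi)$. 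The resulting product breaks into four pieces. Using $\varepsilon(\chi)\varepsilon(\overline\chi)=1$ for even primitive $\chi$, the pairing $\mathcal{M}_{\mathrm{IS}}\cdot(\text{first AFE piece})$ reduces to pure character content $\chi(n\ell)$, and $\mathcal{M}_{\mathrm{MV}}\cdot(\text{second AFE piece})$ reduces to $\overline{\chi}(n\ell)$; the two remaining ``cross'' pairings each retain a single root-number factor $\varepsilon(\chi)$ or $\varepsilon(\overline\chi)$.

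Next, for each fixed $q$, I would push the $\chi$-sum inside and apply the orthogonality
\begin{align*}
\sideset{}{^+}\sum_{\chi\bmod q}\chi(m)\overline{\chi}(n) = \tfrac{1}{2}\sum_{\substack{d\mid q\\ d\mid m-n}}\varphi(d)\mu(q/d) + \tfrac{1}{2}\sum_{\substack{d\mid q\\ d\mid m+n}}\varphi(d)\mu(q/d).
\end{align*}
In each of the two matched pairings, since $n\ell \leq \sqrt q\cdot y_i < q$ under the hypothesis $\vartheta_i < \tfrac12$, the only $d=q$ contribution comes from $n\ell = 1$, giving $\mu(1)P_i[1]V_{+}(1/\sqrt{q/\pi})\varphi^{+}(q) = (P_i(1)+o(1))\varphi^{+}(q)$. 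Summing these two matched diagonals against $\Phi(q/Q)\, q/\varphi(q)$ over $q\in\mathcal{Q}(a,D)$ yields exactly the main term on the right-hand side of the lemma.

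It remains to bound all other contributions. The proper-divisor parts ($d\mid q$ with $d<q$) in the two matched pairings localize $n\ell \equiv \pm 1 \pmod d$, and standard elementary divisor bounds show they contribute only a logarithmic loss per $q$. The two cross pairings, upon opening $\varepsilon(\chi)=\tau(\chi)/\sqrt q$ and interchanging summations, reduce the $d=q$ contribution to sums of the shape
\begin{align*}
\frac{\varphi(q)}{\sqrt q}\sum_{\ell\leq y_i}\frac{\mu(\ell)P_i[\ell]}{\sqrt\ell}\sum_{n}\frac{V_{+}(n/\sqrt{q/\pi})}{\sqrt n}\cos\!\left(\tfrac{2\pi n\overline\ell}{q}\right),
\end{align*}
bounded trivially by $O(\varphi(q)\,y_i^{1/2}\,q^{-1/4})=o(\varphi^{+}(q))$ whenever $\vartheta_i < \tfrac12$; the $d<q$ portion is smaller by the same divisor argument.

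The main obstacle, which is rather mild for the first moment, is checking that the oscillatory short-interval weight $\mathcal{H}_T(q/Q)$ inside $\Phi$ does not corrupt these per-$q$ error estimates when summed over $q\in\mathcal{Q}(a,D)$. Because the savings $q^{\vartheta_i/2-1/4}$ and $(\log q)^{-A}$ are pointwise in $q$, and $|\Phi(q/Q)|$ is bounded by a fixed power of $Q$, they persist after summation against the weight. Consequently, the Fourier decomposition $\Phi=\Phi_1+\Phi_2$ from \eqref{Breaking H} and the Kloosterman-sum machinery developed in Sections \ref{sec: Drappeau Result} and \ref{Sec: Kloosterman Sum Applications} are not required for Lemma \ref{Lemma 1st Moment}; they are reserved for the more delicate analysis of the mollified second moment $\mathcal{S}_2$, where the cross pairings do not decouple as cleanly.
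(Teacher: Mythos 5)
Your approach is correct and is essentially the same as the paper's: the paper's one-line proof simply cites the per-$q$ mollified first moment asymptotics from \cite[Eq. 5.4]{IS1999} and \cite[Section 3]{MV2000}, noting that because those per-$q$ estimates already carry a power-saving error, no averaging over $q$ is needed and the weighted sum over $q\in\mathcal{Q}(a,D)$ can be taken directly. Your sketch simply expands what those citations deliver (approximate functional equation, orthogonality, diagonal extraction, trivial power-saving bound on the cross terms with the Gauss sum) and correctly observes that the weight $\Phi(q/Q)\,q/\varphi(q)$, being non-negative and supported on $q\asymp Q$, does not interfere with the uniform pointwise savings.
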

\begin{proof}
Here we won't require any extra averaging over $q$. Indeed by \cite[Eq. 5.4]{IS1999} and \cite[Section 3]{MV2000}, the desired conclusion follows immediately.
\end{proof}
Regarding $\mathcal{S}_2$, we obtain the following result.
\begin{lem}\label{Lemma 2nd Moment}
Let \(\eta_1, \eta_2 \geq 0\) be fixed such that 
\begin{align}\label{Eta1 and Eta2 Conditions}
9\eta_1 + \eta_2 < \frac{1}{16}. 
\end{align}
Let $\Phi(t)$ be given by \eqref{Breaking H}. Suppose 
\begin{align}\label{Mollifier Length}
0 < \vartheta_1, \vartheta_2  <\frac{1}{2}-80\eta_1-6\eta_2. 
\end{align}
Then
\[
\mathcal{S}_{2} = \left( \lambda_1+\lambda_2+2P_1(1)P_2(1)+o(1) \right )\sum_{q \in \mathcal{Q}(a,D)} \Phi \left ( \frac{q}{Q}\right)\dfrac{q}{\varphi(q)}\varphi^{+}(q),  
\]
where 
\begin{align}
\lambda_1 &= P_1(1)^2+\frac{1}{\vartheta_1}\int_{0}^1 P_1^\prime(x)^2 \dd x \quad
\textrm{and} \quad \lambda_2 = P_2(1)^2+\frac{1}{\vartheta_2}\int_{0}^1 P_2^\prime(x)^2 \dd x \label{Lambda Definition}.
\end{align}
Here $\mathcal{S}_2$ is defined in \eqref{2nd Moment} and we denote $P_i\left(\frac{\log (y_i/x)}{\log y_i} \right )$ as $ P_i[x]$.
\end{lem}
The proof of Lemma \ref{Lemma 2nd Moment} is more intricate and will be presented across the next few sections. In the remainder of this section, we deal with the main terms arising from $\mathcal{S}_2$. Using \eqref{Breaking H}, we write
\begin{align*}
    \mathcal{S}_2 = \mathcal{S}_{2,1}+\mathcal{S}_{2,2},
\end{align*}
where
\begin{align}
\mathcal{S}_{2,1} &=\sum_{q \in \mathcal{Q}(a,D)}\Phi_1 \left ( \frac{q}{Q}\right) \dfrac{q}{\varphi(q)} \hspace{0.2cm} \sideset{}{^+}\sum_{\chi \bmod q}\left|L(\tfrac{1}{2}, \chi )\mathcal{M}(\chi)\right|^2 \label{Tail Separation 1} \\
\textrm{and} \quad \mathcal{S}_{2,2} &= \sum_{q \in \mathcal{Q}(a,D)}  \Phi_2 \left ( \frac{q}{Q}\right) \dfrac{q}{\varphi(q)} \hspace{0.2cm} \sideset{}{^+}\sum_{\chi \bmod q}\left|L(\tfrac{1}{2}, \chi )\mathcal{M}(\chi)\right|^2. \label{Tail Separation 2}
\end{align}
As mentioned before, the contribution from \(\Phi_2(t)\), and consequently from \(\mathcal{S}_{2,2}\), will be small. Therefore, we will focus on \(\mathcal{S}_{2,1}\). To derive the main terms, we will not need to average over \(q\). We first write
\begin{align*}
\left|L(\tfrac{1}{2}, \chi )\mathcal{M}(\chi)\right|^2 =\left|L(\tfrac{1}{2}, \chi )\right|^2\left(\left|\mathcal{M}_{\mathrm{IS}}(\chi)\right|^2+ \left|\mathcal{M}_{\mathrm{MV}}(\chi)\right|^2+2\operatorname{Re}\mathcal{M}_{\mathrm{IS}}(\chi)\mathcal{M}_{\mathrm{MV}}(\overline{\chi})\right).
\end{align*}
Using results from \cite{IS1999} (also see \cite[Section 2.3]{B2012}, we have for $0<\vartheta_1 <\frac{1}{2}$,
\begin{align*}
\sum_{q \in \mathcal{Q}(a,D)}  \Phi_1 \left ( \frac{q}{Q}\right)\dfrac{q}{\varphi(q)}\hspace{0.2cm} \sideset{}{^+}\sum_{\chi \bmod q}\left|L(\tfrac{1}{2}, \chi )\right|^2 \left|\mathcal{M}_{\mathrm{IS}}(\chi)\right|^2 = (\lambda_1+o(1)) \sum_{q \in \mathcal{Q}(a,D)}  \Phi_1 \left ( \frac{q}{Q}\right) \dfrac{q}{\varphi(q)} \varphi^{+}(q),
\end{align*}
where $\lambda_1$ is given by $\eqref{Lambda Definition}$. The same analysis holds for $\left|\mathcal{M}_{\mathrm{MV}}(\chi)\right|^2$ with $\lambda_1$ replaced by $\lambda_2$. Therefore, we are left with to establish the following estimate:
\begin{align}\label{Reduction Step 1}
\sum_{q \in \mathcal{Q}(a,D)} \Phi_1\left ( \frac{q}{Q}\right)\dfrac{q}{\varphi(q)}&\hspace{0.2cm} \sideset{}{^+}\sum_{\chi \bmod q}\left|L(\tfrac{1}{2}, \chi )\right|^2\left(\mathcal{M}_{\mathrm{IS}}(\chi)\mathcal{M}_{\mathrm{MV}}(\overline{\chi})\right) \notag \\
&= (P_1(1)P_2(1)+o(1))\sum_{q \in \mathcal{Q}(a,D)} \Phi_1\left ( \frac{q}{Q}\right) \dfrac{q}{\varphi(q)}\varphi^{+}(q).
\end{align}
The majority of the work will be dedicated to proving \eqref{Reduction Step 1}. We collect two preliminary lemmas.
\begin{lem}\label{Approximate FE}
Let $\chi$ be a primitive even character modulo $q$. Let $G(s)$ be an even polynomial satisfying $G(0)=1$, and which vanishes to second order at $s = \frac{1}{2}$. Then
\[
\left|L(\tfrac{1}{2}, \chi )\right|^2 = 2 \underset{\substack{m,n}}{\sum \sum} \frac{\chi(m)\overline{\chi}(n)}{\sqrt{mn}} \mathcal{Z}\left(\frac{m n}{q}\right),
\]
where 
\begin{align}\label{FE Integral}
\mathcal{Z}(x) = \frac{1}{2\pi i}\int_{(1)} \frac{\Gamma^2\left(\frac{s}{2}+\frac{1}{4}\right)}{\Gamma^2\left(\frac{1}{4}\right)}\frac{G(s)}{s}\pi^{-s} x^{-s} \, \mathrm{d} s.
\end{align}
\end{lem}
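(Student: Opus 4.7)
\textbf{Proof proposal for Lemma \ref{Approximate FE}.} The plan is the standard contour-shift derivation of an approximate functional equation, applied symmetrically to the product $L(1/2+s,\chi)L(1/2+s,\bar\chi)$. I will introduce the integral
\[
I \;=\; \frac{1}{2\pi i}\int_{(1)} L(\tfrac{1}{2}+s,\chi)\,L(\tfrac{1}{2}+s,\bar\chi)\,\frac{G(s)}{s}\,\pi^{-s}\,q^{s}\,\frac{\Gamma^{2}\!\left(\tfrac{s}{2}+\tfrac14\right)}{\Gamma^{2}\!\left(\tfrac14\right)}\,\mathrm{d}s,
\]
and show that $|L(1/2,\chi)|^{2}=2I$. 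On the line $\mathrm{Re}(s)=1$ the product of $L$-functions is given by an absolutely convergent Dirichlet series $\sum_{m,n}\chi(m)\bar\chi(n)(mn)^{-1/2-s}$; interchanging summation and integration, and pulling the factor $(mn/q)^{-s}$ inside the definition of $\mathcal Z$, yields $I=\sum_{m,n}\chi(m)\bar\chi(n)(mn)^{-1/2}\mathcal Z(mn/q)$, which is exactly half of the right-hand side.

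Next I will shift the contour from $\mathrm{Re}(s)=1$ to $\mathrm{Re}(s)=-1$, collecting residues at the poles of the integrand in the strip. The pole of $1/s$ at $s=0$ contributes $L(1/2,\chi)L(1/2,\bar\chi)\cdot G(0)=|L(1/2,\chi)|^{2}$, using $\overline{L(1/2,\chi)}=L(1/2,\bar\chi)$. The factor $\Gamma^{2}(s/2+1/4)$ produces a double pole at $s=-1/2$, but since $G$ vanishes to second order at $1/2$ and is even, $G$ vanishes to second order at $-1/2$ as well, exactly killing this pole; no other poles occur in the strip because $L(\cdot,\chi)$ is entire for non-trivial primitive $\chi$. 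Thus the contour shift gives $I=|L(1/2,\chi)|^{2}+I'$, where $I'$ is the integral on $\mathrm{Re}(s)=-1$.

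On the line $\mathrm{Re}(s)=-1$ I will apply the functional equation for the even primitive character $\chi$ modulo $q$,
\[
L(\tfrac12+s,\chi)\;=\;\varepsilon(\chi)\left(\tfrac{q}{\pi}\right)^{-s}\frac{\Gamma(\tfrac14-\tfrac{s}{2})}{\Gamma(\tfrac14+\tfrac{s}{2})}L(\tfrac12-s,\bar\chi),
\]
and likewise for $\bar\chi$. The product of root numbers $\varepsilon(\chi)\varepsilon(\bar\chi)=1$, so the Gamma ratio squared and the factors $q^{-2s}\pi^{2s}$ combine cleanly with the remaining factors in $I'$. After the substitution $s\mapsto -s$, which maps $\mathrm{Re}(s)=-1$ back to $\mathrm{Re}(s)=1$ with a reversed orientation, and using that $G$ is even so that $G(-s)=G(s)$, I will obtain $I'=-I$. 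Combining with the preceding step gives $2I=|L(1/2,\chi)|^{2}$, which is the claim.

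The only step that requires real care is the verification that no unwanted residue survives at $s=-1/2$: this is precisely the role of the two hypotheses on $G$ (even, vanishing to second order at $1/2$). Everything else is bookkeeping: tracking the two Gamma ratios through the functional equation and keeping the signs right in the change of variables. Convergence of the shifted contours is routine from rapid decay of $\Gamma$ on vertical lines combined with standard convexity bounds for $L$-functions, and absolute convergence of the Dirichlet series at $\mathrm{Re}(s)=1$ justifies the interchange used to produce $\mathcal Z(mn/q)$.
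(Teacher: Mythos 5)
Your proposal is correct and follows the same standard contour-shift derivation that the paper invokes by citing \cite{IS1999} and \cite[Theorem 5.3]{IK2004}. The key points — the residue at $s=0$ producing $|L(\frac12,\chi)|^2$ via $G(0)=1$, the cancellation of the double pole of $\Gamma^2(\frac{s}{2}+\frac14)$ at $s=-\frac12$ by the hypotheses on $G$, the use of $\varepsilon(\chi)\varepsilon(\overline\chi)=1$ for even $\chi$, and the substitution $s\mapsto -s$ giving $I'=-I$ — are all handled properly.
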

\begin{proof}
See \cite{IS1999}. The proof follows along the lines of \cite[Theorem 5.3]{IK2004}
\end{proof}
\begin{lem}\label{Character Orthogonality}
Let $(mn,q)=1$. Then
\[
\sideset{}{^+}\sum_{\chi \bmod q}\chi(m)\overline{\chi}(n) = \frac{1}{2}\underset{\substack{vw=q \\ w \mid m \pm n}}{\sum \sum} \mu(v)\varphi(w).
\]
\end{lem}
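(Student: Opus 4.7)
My plan is to derive this identity from the standard orthogonality for primitive characters together with the even-character projector $\tfrac{1}{2}(1+\chi(-1))$, which equals $1$ on even characters and $0$ on odd ones.

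First, I would record the standard primitive-character orthogonality: assuming $(mn,q)=1$, Möbius inversion applied to the full relation $\sum_{\chi \bmod q}\chi(m)\overline{\chi}(n) = \varphi(q)\mathbbm{1}[m \equiv n \bmod q]$, using the decomposition of characters mod $q$ according to the conductor of their inducing primitive character, yields
\[
\sideset{}{^*}\sum_{\chi \bmod q}\chi(m)\overline{\chi}(n) \;=\; \sum_{\substack{d \mid q \\ d \mid m-n}}\varphi(d)\mu(q/d),
\]
where the $*$ restricts to primitive characters mod $q$.

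Next, I would split the even-primitive sum using the projector:
\[
\sideset{}{^+}\sum_{\chi \bmod q}\chi(m)\overline{\chi}(n) \;=\; \tfrac{1}{2}\sideset{}{^*}\sum_{\chi \bmod q}\chi(m)\overline{\chi}(n) \;+\; \tfrac{1}{2}\sideset{}{^*}\sum_{\chi \bmod q}\chi(-1)\chi(m)\overline{\chi}(n).
\]
Since $\chi(-1)\chi(m) = \chi(-m)$, the second sum on the right is obtained from the first by replacing $m$ with $-m$. Applying the formula from the previous step to each of the two sums then gives
\[
\sideset{}{^+}\sum_{\chi \bmod q}\chi(m)\overline{\chi}(n) \;=\; \tfrac{1}{2}\sum_{\substack{d \mid q \\ d \mid m-n}}\varphi(d)\mu(q/d) \;+\; \tfrac{1}{2}\sum_{\substack{d \mid q \\ d \mid m+n}}\varphi(d)\mu(q/d).
\]

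Finally, the change of variables $w = d$, $v = q/d$ bundles the two sums into the asserted double sum over pairs $(v,w)$ with $vw = q$ and $w \mid m \pm n$, with the $\pm$ interpreted as combining two separate divisibility conditions (each contributing once when it holds). There is no real obstacle; the only mild point of care is ensuring the conventional reading of the $\pm$ notation so that both divisibilities are counted with the correct multiplicity, matching the two halves coming from the even-character projector.
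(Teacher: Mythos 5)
Your proof is correct and complete. The paper itself does not give a proof but instead cites \cite{BM2011} (Lemma 4.1 there), whose argument is exactly the one you reconstruct: Möbius inversion of the full orthogonality relation over conductors to isolate primitive characters, followed by the even-character projector $\tfrac12(1+\chi(-1))$, and then combining the resulting two divisibility conditions $w\mid m-n$ and $w\mid m+n$ into the $w\mid m\pm n$ shorthand (counted with multiplicity when both hold). Your note at the end about reading the $\pm$ as a sum over both signs is exactly the right point of care, and it falls out automatically from the two halves of the projector as you observe.
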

\begin{proof} See \cite[Lemma 4.1]{BM2011}.
\end{proof}
Using the definition of our mollifier $\mathcal{M}(\chi)$ and Lemma \ref{Approximate FE}, we can write
\begin{align} \label{Functional Equation}
&\sideset{}{^+}\sum_{\chi \bmod q}\left|L(\tfrac{1}{2}, \chi )\right|^2 \mathcal{M}_{\mathrm{IS}}(\chi) \mathcal{M}_{\mathrm{MV}}(\overline{\chi}) \notag\\
& =2 \underset{\substack{\ell_1 \leq y_1 , \, \ell_2 \leq y_2 \\
\left(\ell_1 \ell_2,q\right)=1}}{\sum \sum} \frac{\mu(\ell_1) \mu(\ell_2) P_1[\ell_1] P_2[\ell_2]}{\sqrt{\ell_1 \ell_2}} \underset{\substack{(m n,q)=1}}{\sum \sum} \frac{1}{\sqrt{mn}} \mathcal{Z}\left(\frac{m n}{q}\right) \sideset{}{^+}\sum_{\chi \bmod q} \varepsilon(\chi)\chi\left(m \ell_1 \ell_2\right)\overline{\chi}(n).
\end{align}
The main term of \eqref{Reduction Step 1} arises when $m\ell_1 \ell_2=1$ in \eqref{Functional Equation}. The contribution from such terms is  
\[
2P_1(1)P_2(1) \sum_{q \in \mathcal{Q}(a,D)} \Phi_1\left ( \frac{q}{Q}\right) \dfrac{q}{\varphi(q)} \hspace{0.2cm} \sideset{}{^+}\sum_{\chi \bmod q} \varepsilon(\chi)\sum_{n} \frac{\overline{\chi}(n)}{\sqrt{n}} \cdot \mathcal{Z}\left(\frac{n}{q}\right).
\]
At this stage, following the analysis carried out by Pratt \cite[Section 4]{P2019}, we see that the main term is
\begin{align}\label{Reduction Step 2}
(P_1(1)P_2(1)+o(1)) \sum_{q \in \mathcal{Q}(a,D)} \Phi_1\left ( \frac{q}{Q}\right)\dfrac{q}{\varphi(q)}\varphi^{+}(q).
\end{align}
Therefore we are left with the terms in \eqref{Functional Equation} for which $m\ell_1 \ell_2 \neq 1$. These terms contribute to error terms. To treat these error terms, we will follow the framework of Pratt \cite{P2019}, supplemented with additional tools to handle the exponential oscillator that detects short intervals. The details will be carried out in Sections \ref{sec: Lemma 2 Part II} and \ref{Sec: Kloosterman Sum Applications}.
\section{Bounds on Sums of Kloosterman Sums in Residue Classes}\label{sec: Drappeau Result}
To bound the error terms from \eqref{Functional Equation} we need to establish cancellations in sums of Kloosterman sums in residue classes. In this regard, we first recall the celebrated work of Deshouillers and Iwaniec \cite{DI1982}.
\begin{lem}\label{Deshouillers-Iwaniec Result}
Let $\boldsymbol{C}, \boldsymbol{D}, \boldsymbol{N}, \boldsymbol{R}, \boldsymbol{S} \geq 1$ and $\left(b_{n, r, s}\right)$ be a sequence supported inside $(0, \boldsymbol{N}] \times(\boldsymbol{R}, 2 \boldsymbol{R}] \times(\boldsymbol{S}, 2 \boldsymbol{S}] \cap \mathbb{N}^3$. Let $g: \mathbb{R}_{+}^5 \rightarrow$ $\mathbb{C}$ be a smooth function compactly supported in $[\boldsymbol{C}, 2 \boldsymbol{C}] \times [\boldsymbol{D}, 2 \boldsymbol{D}] \times\left(\mathbb{R}_{+}\right)^3$, satisfying the bound
\begin{align}\label{Derivative Bounds}
\frac{\partial^{\nu_1+\nu_2+\nu_3+\nu_4+\nu_5}}{\partial c^{\nu_1} \partial d^{\nu_2} \partial n^{\nu_3} \partial r^{\nu_4} \partial s^{\nu_5}}g(c, d, n, r, s) \ll_{\nu_1, \nu_2, \nu_3, \nu_4, \nu_5} c^{-\nu_1} d^{-\nu_2} n^{-\nu_3} r^{-\nu_4} s^{-\nu_5}
\end{align}
for all fixed $\nu_j \geq 0$ and $1 \leq j \leq 5$. Then 
\begin{align}\label{DI Estimate}
\underset{\substack{(r d, s c)=1}}{\sum_c \sum_d \sum_n \sum_r \sum_s} &b_{n, r, s} g(c, d, n, r, s) \mathrm{e}\left(n \frac{\overline{r d}}{s c}\right) \notag \\
& \ll_{\varepsilon}(\boldsymbol{CDNRS})^{\varepsilon} K(\boldsymbol{C}, \boldsymbol{D}, \boldsymbol{N}, \boldsymbol{R}, \boldsymbol{S})\left\|b_{\boldsymbol{N}, \boldsymbol{R}, \boldsymbol{S}}\right\|_2,
\end{align}
where $\overline{r d}$ denotes the multiplicative inverse of $r d \hspace{0.05cm}\bmod \hspace{0.05cm} s c$, $\left\|b_{\boldsymbol{N}, \boldsymbol{R}, \boldsymbol{S}}\right\|_2=\left(\sum_{n, r, s}\left|b_{n, r, s}\right|^2\right)^{\frac{1}{2}}$ and
\begin{align}\label{K Definition}
K^2(\boldsymbol{C}, \boldsymbol{D}, \boldsymbol{N}, \boldsymbol{R}, \boldsymbol{S})=\boldsymbol{CS}(\boldsymbol{RS} +\boldsymbol{N})(\boldsymbol{C}+\boldsymbol{DR})+\boldsymbol{C}^2 \boldsymbol{DS} \sqrt{(\boldsymbol{RS}+\boldsymbol{N}) \boldsymbol{R}}+\boldsymbol{D}^2 \boldsymbol{NR}.
\end{align}
\end{lem}
\begin{proof}
See \cite[Theorem 12]{DI1982}.
\end{proof}
\par
In Lemma \ref{Deshouillers-Iwaniec Result}, it is vital that the variables \(c\) and \(d\) are linked to a smooth weight function \(g(c, d)\). For the variable \(d\), this allows us to transition to complete Kloosterman sums modulo \(sc\). For the variable \(c\), this is important because, in the context of modular forms, the relevant quantity of interest is the average of Kloosterman sums over moduli attached to a smooth weight. For more results related to these topics, the reader is referred to the works of Blomer--Mili\'cevi\'c \cite{BM2015a, BM2015b}, Bettin--Chandee \cite{Bettin-Chandee}, Drappeau--Pratt--Radziwi\l\l \hspace{0.025cm} 
 \cite{DPR2023} and Litchman \cite{L2023}.
\par
In our argument, the exponential phase in Lemma \ref{Deshouillers-Iwaniec Result} takes a form such that our smooth variable $c$ lies in arithmetic progressions. In this context, Drappeau \cite{D2017} extended the result of Deshouillers--Iwaniec by utilizing modular forms with multiplier systems defined by Dirichlet characters. This approach allows for a more general framework that incorporates congruence conditions on the smooth variables $c$ and $d$. We state below Drappeau's result.
\begin{lem}\label{Drappeau's Theorem}
Let $\boldsymbol{C}, \boldsymbol{D}, \boldsymbol{N}, \boldsymbol{R}, \boldsymbol{S} \geq 1$, and $q, c_0, d_0 \in \mathbb{N}$ be given with $\left(c_0 d_0, q\right)=1$. Let $\left(b_{n, r, s}\right)$ be a sequence supported inside $(0, \boldsymbol{N}] \times(\boldsymbol{R}, 2 \boldsymbol{R}] \times(\boldsymbol{S}, 2 \boldsymbol{S}] \cap \mathbb{N}^3$. Let $g: \mathbb{R}_{+}^5 \rightarrow$ $\mathbb{C}$ be a smooth function compactly supported in $[\boldsymbol{C}, 2 \boldsymbol{C}] \times [\boldsymbol{D}, 2 \boldsymbol{D}] \times\left(\mathbb{R}_{+}\right)^3$, satisfying the bound
\begin{align}\label{Derivative Bounds II}
\frac{\partial^{\nu_1+\nu_2+\nu_3+\nu_4+\nu_5}}{\partial c^{\nu_1} \partial d^{\nu_2} \partial n^{\nu_3} \partial r^{\nu_4} \partial s^{\nu_5}}g(c, d, n, r, s) \ll_{\nu_1, \nu_2, \nu_3, \nu_4, \nu_5}\left\{c^{-\nu_1} d^{-\nu_2} n^{-\nu_3} r^{-\nu_4} s^{-\nu_5}\right\}^{1-\varepsilon_1}
\end{align}
for some small $\varepsilon_1>0$ and all fixed $\nu_j \geq 0$. Then
\begin{align}\label{Drappeau's Result}
\underset{\substack{c \equiv c_0 \text { and } d \equiv d_0\bmod q \\ (qr d, s c)=1}}{\sum_c \sum_d \sum_n \sum_r \sum_s} &b_{n, r, s} g(c, d, n, r, s) \mathrm{e}\left(n \frac{\overline{r d}}{s c}\right) \notag \\
& \ll_{\varepsilon, \varepsilon_1} (q\boldsymbol{CDNRS})^{\varepsilon+\mathcal{O}\left(\varepsilon_1\right)} q^{\frac{3}{2}} K(\boldsymbol{C}, \boldsymbol{D}, \boldsymbol{N}, \boldsymbol{R}, \boldsymbol{S})\left\|b_{\boldsymbol{N}, \boldsymbol{R}, \boldsymbol{S}}\right\|_2,
\end{align}
where $\left\|b_{\boldsymbol{N}, \boldsymbol{R}, \boldsymbol{S}}\right\|_2=\left(\sum_{n, r, s}\left|b_{n, r, s}\right|^2\right)^{\frac{1}{2}}$ and $K^2(\boldsymbol{C}, \boldsymbol{D}, \boldsymbol{N}, \boldsymbol{R}, \boldsymbol{S})$ is given by \eqref{K Definition}.
\end{lem}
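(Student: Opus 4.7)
The statement is Drappeau's extension of the Deshouillers--Iwaniec large sieve (Lemma \ref{Deshouillers-Iwaniec Result}) to sums in which the smooth variables $c,d$ are constrained to residue classes modulo $q$. My plan is to reduce to a spectral setup where a Kuznetsov-type trace formula with a nontrivial multiplier system applies, then use the corresponding spectral large sieve; this is the route followed by Drappeau.

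First, I would detect the two congruence conditions by orthogonality of Dirichlet characters modulo $q$, writing
\[
\mathbb{1}_{c \equiv c_0 \bmod q}\,\mathbb{1}_{d \equiv d_0 \bmod q} = \frac{1}{\varphi(q)^2} \sum_{\psi_1,\psi_2 \bmod q} \overline{\psi_1(c_0)\psi_2(d_0)}\,\psi_1(c)\psi_2(d).
\]
This rewrites the left-hand side of \eqref{Drappeau's Result} as a double character average. For fixed $(\psi_1,\psi_2)$, the inner sum is identical in shape to the one handled in Lemma \ref{Deshouillers-Iwaniec Result}, except that the smooth weight $g(c,d,n,r,s)$ is now multiplied by $\psi_1(c)\psi_2(d)$. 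The strategy is then to treat $\psi_1(c)\psi_2(d) g(c,d,\cdots)$ as a generalized smooth weight for which an analogue of Lemma \ref{Deshouillers-Iwaniec Result} still holds, at the expense of a polynomial loss in $q$.

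To produce such an analogue, I would proceed exactly as in the Deshouillers--Iwaniec proof: apply Poisson summation in $d$ to convert $e(n\overline{rd}/sc)$ together with $\psi_2(d)$ into Kloosterman-type sums modulo $scq$ twisted by a nebentypus built from $\psi_2$; then recognize the resulting averages as Fourier coefficients of Poincaré series on $\Gamma_0(sq)$ with a multiplier system attached to $\psi_1\psi_2$. The Kuznetsov trace formula for $\Gamma_0(sq)$ with this multiplier system reduces the bound to the spectral large sieve inequalities for cusp forms on $\Gamma_0(sq)$ with character; the level-$sq$ version of this large sieve (this is the heart of Drappeau's contribution, building on Deshouillers--Iwaniec) produces the kernel $K(C,D,N,R,S)$ multiplied by a factor $q^{1/2+\varepsilon}$ coming from the dependence on the level. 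Summing the resulting bound over $(\psi_1,\psi_2)$, using Cauchy--Schwarz to trade the $\varphi(q)^2$ character average for a factor $q$, and combining with the $1/\varphi(q)^2$ from orthogonality gives the claimed $q^{3/2}$ loss.

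The main obstacle is the careful bookkeeping in the spectral step: verifying that the Kuznetsov formula on $\Gamma_0(sq)$ with a multiplier system produces a large sieve of the shape $\|b_{N,R,S}\|_2$ times exactly $q^{1/2+\varepsilon} K(C,D,N,R,S)$, and no worse. This requires tracking the contribution of the exceptional spectrum and of all cusps of $\Gamma_0(sq)$ (whose number grows with $q$), and controlling the scattering matrix entries that appear in the Eisenstein contribution. The relaxation of the derivative bound in \eqref{Derivative Bounds II} from $O(1)$ to $O(c^{-\nu_1(1-\varepsilon_1)}\cdots)$ is absorbed routinely: it enlarges the effective support of the Bessel transforms in the Kuznetsov formula by a factor $(\cdots)^{O(\varepsilon_1)}$, which feeds into the final estimate as $(qCDNRS)^{O(\varepsilon_1)}$, accounting for the $O(\varepsilon_1)$ loss in the exponent of \eqref{Drappeau's Result}.
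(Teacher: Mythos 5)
The paper does not actually prove this lemma---its ``proof'' is the citation \cite[Theorem 2.1]{D2017}---so the relevant comparison is with Drappeau's own argument, and your outline matches it in all essentials: detection of the congruence conditions modulo $q$, reinterpretation of the resulting twisted sums as sums of Kloosterman sums for $\Gamma_0(sq)$ with a multiplier system (nebentypus) built from a Dirichlet character modulo $q$, Kuznetsov for that group, and the level-aware spectral large sieve yielding $K(C,D,N,R,S)$ together with the stated powers of $q$, with the relaxed derivative bounds absorbed into the $(qCDNRS)^{O(\varepsilon_1)}$ factor. The only cosmetic deviation is that Drappeau treats $d\equiv d_0\bmod q$ by Poisson summation directly in the arithmetic progression (producing the phase $e(-md_0\overline{s_0c_0}/q)$ and the Kloosterman sums $S(n\overline{r},-m\overline{q};sc)$ that reappear in this paper's proof of Lemma \ref{New Drappeau's Theorem}), reserving the multiplicative-character/multiplier-system device for the $c$-congruence, rather than averaging over two families of characters as you propose.
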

\begin{proof}
See \cite[Theorem 2.1]{D2017}.
\end{proof}
\begin{rem}
The term $\boldsymbol{D}^2 \boldsymbol{NR}$ in Lemma \ref{Deshouillers-Iwaniec Result} and \ref{Drappeau's Theorem} replaces the term $\boldsymbol{D}^2 \boldsymbol{NRS^{-1}}$ from \cite[Theorem 12]{DI1982} and \cite[Theorem 2.1]{D2017} following the corrections suggested by Bombieri--Friedlander--Iwaniec \cite{BFI2019}.
\end{rem}
Theorem \ref{Drappeau's Theorem}, as currently stated, requires further modifications for our purposes due to the following reasons. Firstly, the loss factor $(q\boldsymbol{CDNRS})^{\mathcal{O}(\varepsilon_1)}$ on the right-hand side of \eqref{Drappeau's Result} will influence the choice of the lengths of our mollifiers, which, in turn, determine the final proportion of non-vanishing. Therefore, it is necessary to explicitly compute the dependence of the term $(q\boldsymbol{CDNRS})^{\mathcal{O}(\varepsilon_1)}$ on $\varepsilon_1$. Secondly, the bounds for the derivatives with respect to different variables will differ in our case. While a uniform bound as in \eqref{Derivative Bounds II} can be applied, it is not optimal and affects the final proportion of non-vanishing. To address these issues, we propose the following variant of Lemma \ref{Drappeau's Theorem} to suit our needs.
\begin{lem}\label{New Drappeau's Theorem}
Let $\boldsymbol{C}, \boldsymbol{D}, \boldsymbol{N}, \boldsymbol{R}, \boldsymbol{S} \geq 1$, and $q, c_0, d_0 \in \mathbb{N}$ be given with $\left(c_0 d_0, q\right)=1$. Let $\left(b_{n, r, s}\right)$ be a sequence supported inside $(0, \boldsymbol{N}] \times(\boldsymbol{R}, 2 \boldsymbol{R}] \times(\boldsymbol{S}, 2 \boldsymbol{S}] \cap \mathbb{N}^3$. Let $g: \mathbb{R}_{+}^5 \rightarrow$ $\mathbb{C}$ be a smooth function compactly supported in $[\boldsymbol{C}, 2 \boldsymbol{C}] \times [\boldsymbol{D}, 2 \boldsymbol{D}] \times\left(\mathbb{R}_{+}\right)^3$, satisfying the bound
\begin{align*}
\frac{\partial^{\nu_1+\nu_2+\nu_3+\nu_4+\nu_5}}{\partial c^{\nu_1} \partial d^{\nu_2} \partial n^{\nu_3} \partial r^{\nu_4} \partial s^{\nu_5}}g(c, d, n, r, s) \ll_{\nu_1, \nu_2, \nu_3, \nu_4, \nu_5} c^{-\nu_1(1-\varepsilon_1)} d^{-\nu_2(1-\varepsilon_2)} n^{-\nu_3(1-\varepsilon_3)} r^{-\nu_4(1-\varepsilon_4)} s^{-\nu_5(1-\varepsilon_5)}
\end{align*}
for some fixed $0<\varepsilon_j<\frac{1}{2}$ and all fixed $\nu_j \geq 0$ where $1 \leq j \leq 5$. Then
\begin{align}\label{New Drappeau's Result}
\underset{\substack{c \equiv c_0 \text { and } d \equiv d_0\bmod q \\ (qr d, s c)=1}}{\sum_c \sum_d \sum_n \sum_r \sum_s} &b_{n, r, s} g(c, d, n, r, s) \mathrm{e}\left(n \frac{\overline{r d}}{s c}\right) \notag \\
& \ll_{\varepsilon}(\boldsymbol{CDNRS})^{\varepsilon+12(\sum_{j}\varepsilon_j)} q^{\frac{3}{2}+\varepsilon} K(\boldsymbol{C}, \boldsymbol{D}, \boldsymbol{N}, \boldsymbol{R}, \boldsymbol{S})\left\|b_{\boldsymbol{N}, \boldsymbol{R}, \boldsymbol{S}}\right\|_2,
\end{align}
where $\left\|b_{\boldsymbol{N}, \boldsymbol{R}, \boldsymbol{S}}\right\|_2=\left(\sum_{n, r, s}\left|b_{n, r, s}\right|^2\right)^{\frac{1}{2}}$ and $K^2(\boldsymbol{C}, \boldsymbol{D}, \boldsymbol{N}, \boldsymbol{R}, \boldsymbol{S})$ is given by \eqref{K Definition}.
Moreover, when $\boldsymbol{S}=1$, the factor $q^{\frac{3}{2}+\varepsilon}$ on the right-hand side of \eqref{New Drappeau's Result} can be improved to $q^{1+\varepsilon}.$
\end{lem}
\begin{proof}
We follow \cite[Section 4.3.3]{D2017} with necessary modifications. We assume that the sequence $b_{n, r, s}$ is supported on $\boldsymbol{N}<n \leq 2 \boldsymbol{N}$ losing a factor $(\log \boldsymbol{N})^{\frac{1}{2}}$ in the process. Also, let $s_0 \in (\mathbb{Z}/q\mathbb{Z})^{\times}$ be fixed and assume without loss of generality that
\[
b_{n, r, s}=0 \text { unless } s \equiv s_0\bmod q.
\]
Finally we sum over $s_0$ losing a factor of $q^{\frac{1}{2}}$ in the process. When $\boldsymbol{S}=1$, we do not lose this extra factor of $q^{\frac{1}{2}}$. Later, in Section \ref{Sec: Kloosterman Sum Applications}, we will actually apply Lemma \ref{New Drappeau's Theorem} with the choice $\boldsymbol{S}=1$. \par
By Poisson summation, the left-hand side of \eqref{New Drappeau's Result} is
\begin{align}\label{Poisson Summation}
\sum_{\substack{c, m, n, r, s \\
(q r, s c)=1, \, c \equiv c_0\bmod q}} \frac{b_{n, r, s}}{s c q} \ddot{g}(c, m / s c q, n, r, s) \mathrm{e}\left(\frac{-m d_0 \overline{s_0 c_0}}{q}\right) S(n \overline{r},-m \overline{q} ; s c)
\end{align}
where
\begin{align}
\ddot{g}(c, m, n, r, s)&:=\int_{-\infty}^{\infty} g(c, \xi, n, r, s) \mathrm{e}(\xi m) \, \mathrm{d} \xi \label{g dot definition}\\
\textnormal{and} \quad S(m, n, c)&=\sum_{(x, c)=1} \mathrm{e} \left(\frac{m x+n \overline{x}}{c}\right). \label{Kloosterman Sum Definition}
\end{align}
Let $M>0$ be a parameter to be chosen later. We break our sum over $m$ in \eqref{Poisson Summation} into $\mathcal{A}_0, \mathcal{A}_{\infty}$ and $\mathcal{B}$ where $\mathcal{A}_0$ is the contribution from $m=0$, $\mathcal{A}_{\infty}$ is the contribution from $\lvert m \rvert >M$ and $\mathcal{B}$ is the contribution from $0 < \lvert m \rvert \leq M$. Following the correction in \cite{BFI2019} (see also 
 \cite{DPR2023}), the bound for $\mathcal{A}_0$ is 
\[
\mathcal{A}_0 \ll \frac{1}{q} \sum_{\substack{c, n, r, s \\(q r, s c)=1, \,  c \equiv c_0\bmod q}} \frac{\left|b_{n, r, s}\right|}{s c}|\ddot{g}(c, 0, n, r, s)|(n, s c) \ll q^{-2}(\log \boldsymbol{S})^2 \boldsymbol{D}\{\boldsymbol{NR}\}^{\frac{1}{2}}\left\|b_{\boldsymbol{N}, \boldsymbol{R}, \boldsymbol{S}}\right\|_2.
\]
The treatment for $\mathcal{B}$ is precisely as in \cite{D2017}, except  that in order to satisfy the derivative conditions in \cite[Proposition 4.13]{D2017}, it suffices to enlarge our bound by a factor of at most $\mathcal{O}((\boldsymbol{CDNRS})^{12\sum_j \varepsilon_j})$. Finally for $\mathcal{A}_{\infty}$, by repeated integration by parts of \eqref{g dot definition}, for fixed $k \geq 1$ and $m \neq 0$ we have
\[
\ddot{g}(c, m /(s c q), n, r, s) \ll_k \boldsymbol{D}^{1-k\left(1-\varepsilon_2\right)}\left(\frac{s c q}{|m|}\right)^k.
\]
Choosing $k = \lceil \varepsilon_2^{-1} \rceil$ and $M =(q\boldsymbol{SCD} )^{\varepsilon+4\varepsilon_2} \boldsymbol{SC} q  \boldsymbol{D}^{-1}$, we have the bound
$$
\ddot{g}(c, m /(s c q), n, r, s) \ll_{\varepsilon} 1 / m^2 \quad(|m|>M).
$$
Bounding the Kloosterman sum in \eqref{Poisson Summation} trivially by $s c$, we obtain
$$
\mathcal{A}_{\infty} \ll_{\varepsilon}(q\boldsymbol{S C D})^{\varepsilon+4\varepsilon_2} q^{-2} \boldsymbol{D}\{\boldsymbol{NR/S}\}^{\frac{1}{2}}\left\|b_{\boldsymbol{N}, \boldsymbol{R}, \boldsymbol{S}}\right\|_2
$$
which is acceptable. This finishes the proof.
\end{proof}
Following the work of Drappeau--Pratt--Radziwi\l\l \hspace{0.025cm} \cite[Section 3]{DPR2023}, we can use current knowledge on the spectral gap of the Laplacian on congruence surfaces (see Kim--Sarnak \cite{KS2003}) to derive the following refinement of Lemma~\ref{New Drappeau's Theorem}. This will be crucial in extending the admissible ranges in Theorem~\ref{Main Theorem A}.
\par
Let $\theta \geq 0$ be a bound towards the Petersson--Ramanujan conjecture, in the sense of \cite[Eq. (4.6)]{D2017}. Selberg's $\frac{3}{16}$ theorem corresponds to $\theta \leq \frac{1}{4}$, and the Kim--Sarnak bound \cite{KS2003} asserts that $\theta \leq \frac{7}{64}$.
 \begin{lem}\label{New Drappeau's Theorem with Theta}
Let the notations and hypothesis be as in Lemma \ref{New Drappeau's Theorem}. Then
\begin{align}\label{New Drappeau's Result with Theta}
\underset{\substack{c \equiv c_0 \text { and } d \equiv d_0\bmod q \\ (qr d, s c)=1}}{\sum_c \sum_d \sum_n \sum_r \sum_s} &b_{n, r, s} g(c, d, n, r, s) \mathrm{e}\left(n \frac{\overline{r d}}{s c}\right) \notag \\
& \ll_{\varepsilon}(\boldsymbol{CDNRS})^{\varepsilon+12(\sum_{j}\varepsilon_j)} q^{\frac{3}{2}+\varepsilon} K(\boldsymbol{C}, \boldsymbol{D}, \boldsymbol{N}, \boldsymbol{R}, \boldsymbol{S})\left\|b_{\boldsymbol{N}, \boldsymbol{R}, \boldsymbol{S}}\right\|_2,
\end{align}
where $\left\|b_{\boldsymbol{N}, \boldsymbol{R}, \boldsymbol{S}}\right\|_2=\left(\sum_{n, r, s}\left|b_{n, r, s}\right|^2\right)^{\frac{1}{2}}$ and 
\begin{align}\label{K Definition with Theta}
K^2(\boldsymbol{C}, \boldsymbol{D}, \boldsymbol{N}, \boldsymbol{R}, \boldsymbol{S})&=\boldsymbol{CS}(\boldsymbol{RS} +\boldsymbol{N})(\boldsymbol{C}+\boldsymbol{DR}) \notag \\
&\quad+\boldsymbol{C}^{1+4\theta} \boldsymbol{DS} \left ( (\boldsymbol{RS}+\boldsymbol{N}) \boldsymbol{R}\right)^{1-2\theta} \left ( 1+\frac{q\boldsymbol{C}}{\boldsymbol{RD}} \right)^{1-4\theta}+\boldsymbol{D}^2 \boldsymbol{NR}.
\end{align}
\end{lem}
\begin{proof}
The proof follows by combining the arguments from  Drappeau--Pratt--Radziwi\l\l \hspace{0.025cm} \cite[Section 3]{DPR2023} along with the proof of Lemma \ref{New Drappeau's Theorem}.
\end{proof}
\section{Proof of Theorem \ref{Main Theorem}: Preliminary Reductions}\label{sec: Lemma 2 Part II} This section will be devoted to relevant simplifications towards estimating the error terms that arises from \eqref{Functional Equation} when $m\ell_1 \ell_2 \neq 1$. Here averaging over $q$ will be necessary. All implied constants are allowed to depend on $\eta_1, \eta_2$ and $\Psi$. It suffices to work with $D=\lfloor Q^{\eta_2}\rfloor$. We express our error term as 
\begin{align}
\mathcal{E}&=\sum_{q \in \mathcal{Q}(a,D)}  \Phi_1 \left ( \frac{q}{Q}\right) \dfrac{q}{\varphi(q)}\hspace{0.2cm} \underset{\substack{\ell_1 \leq y_1 , \, \ell_2 \leq y_2 \\
\left(\ell_1 \ell_2,q\right)=1}}{\sum \sum}  \frac{\mu(\ell_1) \mu(\ell_2) P_1[\ell_1] P_2[\ell_2]}{\sqrt{\ell_1 \ell_2}} \notag \\
&\quad \times \underset{\substack{(m n,q)=1}}{\sum \sum} \frac{1}{\sqrt{mn}} \mathcal{Z}\left(\frac{m n}{q}\right) \sideset{}{^+}\sum_{\chi \bmod q} \varepsilon(\chi)\chi\left(m \ell_1 \ell_2\right)\overline{\chi}(n),\label{Getting the Error Terms}
\end{align}
where we assume $m\ell_1 \ell_2 \neq 1$ without explicitly stating it. Comparing with the size of the expected main term in Lemma \ref{Lemma 2nd Moment} which is of order $Q^{2-\eta_2}$, it suffices to show that
\begin{align}
\mathcal{E} \ll_{\varepsilon}Q^{2-\eta_2-\varepsilon}\label{Error Term Goal A}.
\end{align} 
\subsection{Localizing Parameters} Let us write
\begin{align}\label{F Fourier Expansion}
\Phi_1\left(\frac{q}{Q}\right)=\sum_{\lvert k \rvert \leq K} b(k) \Psi\left(\frac{q}{Q}\right) \mathrm{e}\left(\frac{k q}{Q}\right)=\sum_{\lvert k \rvert \leq K}  b(k) \Theta_k\left(\frac{q}{Q}\right),
\end{align}
where we set
\begin{align}\label{Theta k Definition}
\Theta_k\left(\frac{q}{Q}\right) = \Psi\left(\frac{q}{Q}\right) \mathrm{e}\left(\frac{k q}{Q}\right).
\end{align}
We expand the $\varepsilon(\chi)$ factor in \eqref{Getting the Error Terms} using \eqref{Epsilon_chi definition} and apply Lemma \ref{Character Orthogonality} to obtain
\begin{align*}
\sideset{}{^+}\sum_{\chi \bmod q} \varepsilon(\chi)\chi\left(m \ell_1 \ell_2\right)\overline{\chi}(n) = \frac{1}{\sqrt{q}} \underset{\substack{vw=q \\ (v,w)=1}}{\sum \sum} \mu^2(v)\varphi(w)\cos\left( \frac{2\pi n \overline{m\ell_1 \ell_2 v}}{w}\right).
\end{align*}
Therefore we can rewrite \eqref{Getting the Error Terms} as
\begin{align*}
\mathcal{E}&= \sum_{\lvert k \rvert \leq K} b(k) \underset{\substack{v,w\\ vw \in \mathcal{Q}(a,D) \\ (v,w)=1}}{\sum \sum} \mu^2(v) \frac{v}{\varphi(v)}\frac{w^{\frac{1}{2}}}{v^{\frac{1}{2}}} \Theta_k\left(\frac{vw}{Q}\right)\underset{\substack{\ell_1 \leq y_1 , \, \ell_2 \leq y_2 \\
\left(\ell_1 \ell_2, vw\right)=1}}{\sum \sum}  \frac{\mu(\ell_1) \mu(\ell_2) P_1[\ell_1] P_2[\ell_2]}{\sqrt{\ell_1 \ell_2}} \notag \\
& \hspace{0.5cm} \times \underset{\substack{(m n, vw)=1}}{\sum \sum} \frac{1}{\sqrt{mn}} \mathcal{Z}\left(\frac{m n}{vw}\right) \cos \left(\frac{2 \pi n \overline{m \ell_1 \ell_2 v }}{w}\right) = \sum_{\lvert k \rvert \leq K} b(k) \mathcal{E}_{k},
\end{align*}
say. Now, fix $k$ such that $\lvert k \rvert \leq K$. Also, fix $v \equiv v_0 \bmod D$ for some $v_0 \in (\mathbb{Z}/D\mathbb{Z})^{\times}$ and let $w_0 \equiv a \overline{v_0} \bmod D$. In order to establish \eqref{Error Term Goal A}, it suffices to show that
\begin{align*}
\mathcal{E}_{k,v_0}&= \underset{\substack{(v,w)=1 \\ v \equiv v_0 \bmod D\\ w \equiv w_0 \bmod D}}{\sum \sum} \mu^2(v) \frac{v}{\varphi(v)}\frac{w^{\frac{1}{2}}}{v^{\frac{1}{2}}} \Theta_k\left(\frac{vw}{Q}\right)\underset{\substack{\ell_1 \leq y_1 , \, \ell_2 \leq y_2 \\
\left(\ell_1 \ell_2, vw\right)=1}}{\sum \sum} \frac{\mu(\ell_1) \mu(\ell_2) P_1[\ell_1] P_2[\ell_2]}{\sqrt{\ell_1 \ell_2}} \notag \\
& \hspace{0.5cm} \times \underset{\substack{(m n, vw)=1}}{\sum \sum} \frac{1}{\sqrt{mn}} \mathcal{Z}\left(\frac{m n}{vw}\right) \cos \left(\frac{2 \pi n \overline{m \ell_1 \ell_2 v }}{w}\right) \ll_{\varepsilon} Q^{2-2\eta_1-2\eta_2-\varepsilon},
\end{align*}
uniformly over $k$ and $v_{0}$. We apply partitions of unity to $\mathcal{E}_{k,v_0}$ and restrict the sums to dyadic intervals. We use $\mathop{\sum\nolimits^*}$ to indicate that the sum is over $v \equiv v_0 \bmod D$, and similarly for $w$. With this, we consider
\begin{align*}
\mathcal{E}_{k,v_0}(M,N,L_1,L_2,V,W)&=\underset{\substack{ (v,w)=1 }}{\mathop{\sum\nolimits^*}\mathop{\sum\nolimits^*}} \mu^2(v) \frac{v}{\varphi(v)} \frac{w^{\frac{1}{2}}}{v^{\frac{1}{2}}}G\left(\frac{v}{V} \right )G\left(\frac{w}{W} \right ) \Theta_k\left(\frac{v w}{Q}\right) \notag \\
& \hspace{0.6cm} \times \underset{\substack{\ell_1 \leq y_1 , \, \ell_2 \leq y_2 \\
\left(\ell_1 \ell_2,vw\right)=1}}{\sum \sum} \frac{\mu(\ell_1) \mu(\ell_2) P_1[\ell_1] P_2[\ell_2]}{\sqrt{\ell_1 \ell_2}} G\left(\frac{\ell_1}{L_1} \right )G\left(\frac{\ell_2}{L_2} \right )  \notag \\
& \hspace{0.6cm} \times \underset{\substack{(mn, vw)=1}}{\sum \sum} \frac{1}{\sqrt{mn}} \mathcal{Z}\left(\frac{m n}{v w}\right) G\left(\frac{m}{M} \right )G\left(\frac{n}{N} \right ) \cos \left(\frac{2 \pi n \overline{m \ell_1 \ell_2 v}}{w}\right).
\end{align*}
Here $G$ is a smooth nonnegative function supported in $[\frac{1}{2},2]$ satisfying $G^{(j)}(x) \ll_j Q^{j\varepsilon}$, and the numbers $M,N,L_1,L_2, V,W$ range over powers of $2$. We may assume 
\[M,N,L_1,L_2,V,W \gg 1, \quad VW \asymp Q \quad \textrm{and} \quad L_i \ll y_i.
\] 
For any arbitrary large $A>0$, $\mathcal{Z}$ satisfies $\mathcal{Z}(x) \ll_A (1+x)^{-A}$. Therefore if $MN >Q^{1+\varepsilon}$, then for any arbitrary large $B>0$, $\mathcal{Z}\left( \frac{mn}{vw}\right) \ll_A(1+Q^{\varepsilon})^{-A} \ll_B Q^{-B}.$ Hence we can assume $MN \leq Q^{1+\varepsilon}$. After applying partitions of unity, there are at most $Q^{o(1)}$ summands in each parameter. So overall we have $Q^{o(1)}$ dyadic sums. Therefore up to a change in the definition of $G$, we may rewrite $\mathcal{E}_{k,v_0}$ as
\begin{align*}
\mathcal{E}_{k,v_0} &=\frac{W^{\frac{1}{2}}}{(MNL_1L_2V)^{\frac{1}{2}}} \underset{\substack{ (v,w)=1 }}{\mathop{\sum\nolimits^*}\mathop{\sum\nolimits^*}} \alpha(v) G_1\left(\frac{v}{V} \right )G_2\left(\frac{w}{W} \right )\Theta_k\left(\frac{v w}{Q}\right) \notag \\
& \hspace{0.5cm} \times \underset{\substack{\ell_1 \leq y_1 , \, \ell_2 \leq y_2 \\
\left(\ell_1 \ell_2,vw\right)=1}}{\sum \sum} \beta(\ell_1) \gamma(\ell_2) G_3\left(\frac{\ell_1}{L_1} \right )G_4\left(\frac{\ell_2}{L_2} \right ) \notag \\
& \hspace{0.5cm} \times \underset{\substack{(mn, vw)=1}}{\sum \sum}   \mathcal{Z}\left(\frac{m n}{v w}\right) G_5\left(\frac{m}{M} \right )G_6\left(\frac{n}{N} \right )\cos \left(\frac{2 \pi n \overline{m \ell_1 \ell_2 v}}{w}\right),
\end{align*}
where 
\begin{align}\label{Parameter Bounds}
\alpha(v), \beta(\ell_1), \gamma(\ell_2) \ll Q^{o(1)} \quad \textnormal{and} \quad G_i^{(j)}(x) \ll_{\varepsilon} Q^{j\varepsilon}, \quad 1 \leq i \leq 6.
\end{align}
\subsection{Separation of Variables} To proceed, we need to separate the variables involved in \(\Theta_k\) and \(\mathcal{Z}\). We separate the variables in \(\mathcal{Z}\) by expressing \(\mathcal{Z}\) as an integral using \eqref{FE Integral}, and shifting the line of integration to \(\operatorname{Re}(s) = (\log Q)^{-1}\). Due to the rapid decay of the \(\Gamma\) function in vertical strips, we can restrict \(\operatorname{Im}(s)\) to \(\ll Q^{\varepsilon}\). By interchanging the sum and the integral, and suitably redefining the functions \(G_i\) and \(\alpha(v)\), it suffices to consider \(\mathcal{E}_{k,v_0}\) in the form
\begin{align}\label{Separating Z}
\mathcal{E}_{k,v_0} &=\frac{W^{\frac{1}{2}}}{(MNL_1L_2V)^{\frac{1}{2}}} \underset{\substack{ (v,w)=1 }}{\mathop{\sum\nolimits^*}\mathop{\sum\nolimits^*}} \alpha(v) G_1\left(\frac{v}{V} \right )G_2\left(\frac{w}{W} \right )\Theta_k\left(\frac{v w}{Q}\right) \notag \\
& \times \underset{\substack{\ell_1 \leq y_1 , \, \ell_2 \leq y_2 \\
\left(\ell_1 \ell_2,vw\right)=1}}{\sum \sum} \beta(\ell_1) \gamma(\ell_2) G_3\left(\frac{\ell_1}{L_1} \right )G_4\left(\frac{\ell_2}{L_2} \right )\underset{\substack{(mn, vw)=1}}{\sum \sum}  G_5\left(\frac{m}{M} \right )G_6\left(\frac{n}{N} \right )\cos \left(\frac{2 \pi n \overline{m \ell_1 \ell_2 v}}{w}\right),
\end{align}
where the conditions in \eqref{Parameter Bounds} still hold. The separation of variables for $\Theta_k$ is more subtle. Let $M_{\Theta_k}(s)$ be the Mellin transform of $\Theta_k$. We have the relations
\begin{align*}
M_{\Theta_k}(s) &= \int_{0}^{\infty} x^{s-1}\Theta_k(x) \, \mathrm{d} x \quad \textrm{and} \quad \Theta_k(x)= \frac{1}{2\pi i}\int_{c-i\infty}^{c+i\infty} x^{-s}M_{\Theta_k}(s) \, \mathrm{d} s,
\end{align*}
for any $c>0$. Since $\Theta_k$ is only supported on $[\frac{1}{2},\frac{3}{2}]$ and uniformly bounded, the contour of integration is within the range of convergence of $M_{\Theta_k}$. Using by-parts integration, we have
\begin{align*}
M_{\Theta_k}(c+it) &= \int_{0}^{\infty} x^{c+it-1}\Theta_k(x) \, \mathrm{d} x = \int_{\frac{1}{2}-\varepsilon}^{\frac{3}{2}+\varepsilon} x^{c+it-1}\Theta_k(x) \dd x = -\int_{\frac{1}{2}-\varepsilon}^{\frac{3}{2}+\varepsilon} \frac{x^{c+it}}{c+it}\Theta_k^\prime(x) \dd x \notag
\end{align*}
for any $\varepsilon>0$. Repeating this step multiple times, we obtain for any $j \in \mathbb{N}$
\[ \lvert M_{\Theta_k}(c+it) \rvert \ll_j \frac{1}{\lvert t \rvert^j} \sup\{ \Theta_k^{(i)}(x), 1 \leq i \leq j\} \ll_j \frac{\lvert k \rvert^j}{\lvert t \rvert^j}.
\]
On the other hand, note that
\begin{align}\label{Mellin I 2}
\Theta_k\left(\frac{vw}{Q}\right ) = \frac{1}{2\pi i} \left(\frac{vw}{Q}\right )^{-c}\int_{-\infty}^{\infty} \left(\frac{vw}{Q}\right )^{-it}M_{\Theta_k}(c+it) \dd t.
\end{align}
Let $c=(\log Q)^{-1}$. We break the integral in \eqref{Mellin I 2} into $\lvert  t \rvert \leq Q^{\varepsilon}\lvert k \rvert$ and $\lvert  t \rvert > Q^{\varepsilon}\lvert k \rvert$. When $\lvert  t \rvert > Q^{\varepsilon}\lvert k \rvert$,
\begin{align*}
\frac{1}{2\pi i} \left(\frac{vw}{Q}\right )^{-c}&\int_{\lvert  t \rvert > Q^{\varepsilon}k} \left(\frac{vw}{Q}\right )^{-it}M_{\Theta_k}(c+it) \dd t \notag \ll \left(\frac{vw}{Q}\right )^{-c}\int_{\lvert  t \rvert > Q^{\varepsilon}k} \,\frac{\lvert k\rvert^j}{\lvert t \rvert^j} \dd t \ll_\varepsilon Q^{-100},
\end{align*}
by choosing $j$ sufficiently large. So we concentrate on the part where $\lvert  t \rvert \leq Q^{\varepsilon}\lvert k \rvert$. Interchanging summations and the integral, up to some bounded absolute constant, we may rewrite \eqref{Separating Z} as
\begin{align*}
\mathcal{E}_{k,v_0} &=\frac{W^{\frac{1}{2}}}{ (MNL_1L_2V)^{\frac{1}{2}}} \int_{\lvert  t \rvert \leq Q^{\varepsilon}\lvert k \rvert} M_{\Theta_k}(c+it) \underset{\substack{ (v,w)=1 }}{\mathop{\sum\nolimits^*}\mathop{\sum\nolimits^*}} \alpha(v) \left(\frac{v}{V}\right )^{-c-it} G_1\left(\frac{v}{V} \right ) \left(\frac{w}{W}\right )^{-c-it}  G_2\left(\frac{w}{W} \right )  \notag \\
&\times \underset{\substack{\ell_1 \leq y_1 , \, \ell_2 \leq y_2 \\
\left(\ell_1 \ell_2,vw\right)=1}}{\sum \sum} \beta(\ell_1) \gamma(\ell_2) G_3\left(\frac{\ell_1}{L_1} \right )G_4\left(\frac{\ell_2}{L_2} \right ) \underset{\substack{(mn, vw)=1}}{\sum \sum}  G_5\left(\frac{m}{M} \right )G_6\left(\frac{n}{N} \right )\cos \left(\frac{2 \pi n \overline{m \ell_1 \ell_2 v}}{w}\right) \dd t. \notag
\end{align*}
We insert the factor $(v/V)^{-c-it}$ into the definition of $\alpha(v)$ and modify the definition of $G_2(x)$ by incorporating the factor $x^{-c-it}$. We will bound $M_{\Theta_k}(c+it)$ trivially by $\mathcal{O}(1)$ in our range of $t$.  Recalling that $\lvert k \rvert \leq K =Q^{2\eta_1+\varepsilon}$, in order to prove \eqref{Error Term Goal A}, it suffices to show that 
\begin{align}\label{Error Term Goal II}
\widetilde{\mathcal{E}}_{k,v_0} &=\frac{W^{\frac{1}{2}}}{(MNL_1L_2V)^{\frac{1}{2}}} \underset{\substack{ (v,w)=1 }}{\mathop{\sum\nolimits^*}\mathop{\sum\nolimits^*}} \alpha(v) G_1\left(\frac{v}{V} \right )G_2\left(\frac{w}{W} \right )  \underset{\substack{\ell_1 \leq y_1 , \, \ell_2 \leq y_2 \\
\left(\ell_1 \ell_2,vw\right)=1}}{\sum \sum}\beta(\ell_1) \gamma(\ell_2) G_3\left(\frac{\ell_1}{L_1} \right )G_4\left(\frac{\ell_2}{L_2} \right ) \notag \\
& \hspace{0.5cm} \times \underset{\substack{(mn, vw)=1}}{\sum \sum}  G_5\left(\frac{m}{M} \right )G_6\left(\frac{n}{N} \right ) \mathrm{e} \left(\frac{n \overline{m \ell_1 \ell_2 v}}{w}\right) \ll_{\varepsilon} Q^{2-4\eta_1-2\eta_2-\varepsilon}
\end{align}
\begin{align}\label{Parameter Bounds II}
\textrm{where} \quad \alpha(v), \beta(\ell_1), \gamma(\ell_2) \ll Q^{o(1)}, \quad  G_2^{(j)}(x) \ll_{\varepsilon} Q^{j(2\eta_1+\varepsilon)} \quad \textrm{and} \quad G_i^{(j)}(x) \ll_{\varepsilon} Q^{j\varepsilon}, i \neq 2.
\end{align}
\subsection{Character Orthogonality and Range Reductions}\label{sec: Lemma 2 Part III} We will utilize character orthogonality to simplify our problem to a form where Lemma \ref{New Drappeau's Theorem} is applicable. As we simplify \( \widetilde{\mathcal{E}}_{k,v_0} \) further by narrowing our problem to specific cases, we will continue to refer to the newly adjusted error term as \( \widetilde{\mathcal{E}}_{k,v_0} \), with a slight abuse of notation. Our ultimate goal is to prove \eqref{Error Term Goal II}.
\subsubsection{Reducing the range of $v$.} Let $y_1=y_2 = Q^{\frac{1}{2}-\xi}$, where
\begin{align}\label{xi conditions}
\xi = A_1\eta_1+A_2\eta_2+\varepsilon.
\end{align}
Here $A_1$ and $A_2$ are positive constants satisfying
\begin{align}\label{xi conditions II}
A_1 \geq 4, \quad \textrm{and} \quad A_2 \geq 2.
\end{align}
For the moment, these are rather loose restrictions on $A_1$ and $A_2$, but this is convenient for the following calculations. Later on, we shall enforce stronger conditions on $A_1$ and $A_2$. We have the trivial bound 
\[\widetilde{\mathcal{E}}_{k,v_0} \ll_{\varepsilon} Q^{\varepsilon} W^{\frac{3}{2}}(VMNL_1L_2)^{\frac{1}{2}} = \frac{Q^{2+\varepsilon}Q^{\frac{1}{2}-\xi}}{V}.\]
When $V \geq Q^{\frac{1}{2}+\varepsilon}$, the trivial bound is $\ll_{\varepsilon} Q^{2-\xi}$ which is acceptable. Therefore we may assume $V \leq Q^{\frac{1}{2}+\varepsilon}$ from now on. This would imply $W \gg Q^{\frac{1}{2}-\varepsilon}$.
\par
We reduce the range of $V$ even further. Using orthogonality of multiplicative characters, we write
$$
\mathrm{e}\left(\frac{n \overline{m \ell_1 \ell_2 v}}{w}\right)=\frac{1}{\varphi(w)} \sum_{\chi \bmod w} \tau(\overline{\chi}) \chi(n) \overline{\chi}\left(m \ell_1 \ell_2 v\right).
$$
We apply the Gauss sum bound $|\tau(\overline{\chi})| \ll w^{\frac{1}{2}}$ to obtain
$$
\widetilde{\mathcal{E}}_{k,v_0} \ll \frac{W}{\left(M N L_1 L_2 V\right)^{\frac{1}{2}}} \underset{\substack{ v \sim V, \, w \sim W }}{\mathop{\sum\nolimits^*}\mathop{\sum\nolimits^*}} \frac{\alpha(v)}{\varphi(w)} \sum_{\chi \bmod w} \bigg|\underset{\substack{(mn, vw)=1}}{\sum \sum} \chi(n) \overline{\chi}(m)\bigg|\bigg|\underset{\substack{\left(\ell_1 \ell_2, v\right)=1}}{\sum \sum} \overline{\chi}\left(\ell_1 \ell_2\right)\bigg|,
$$
where we have suppressed some notations for brevity. An application of the Cauchy--Schwarz inequality and character orthogonality shows that
$$
\sum_{\chi \bmod w}\bigg|\underset{\substack{m,n}}{\sum \sum}\bigg|\bigg|\underset{\substack{\ell_1,\ell_2}}{\sum \sum}\bigg| \ll Q^{o(1)}\left(M N L_1 L_2\right)^{\frac{1}{2}}(M N+W)^{\frac{1}{2}}\left(L_1 L_2+W\right)^{\frac{1}{2}}.
$$
In particular, we obtain the bound
\begin{align}\label{Orthogonality estimate}
\widetilde{\mathcal{E}}_{k,v_0} \ll_{\varepsilon} \frac{Q^{1-\eta_2+\varepsilon}(M N)^{\frac{1}{2}}\left(y_1 y_2\right)^{\frac{1}{2}}}{V^{\frac{1}{2}}}+\frac{Q^{\frac{3}{2}-\eta_2+\varepsilon}(M N)^{\frac{1}{2}}}{V}+\frac{Q^{\frac{3}{2}-\eta_2+\varepsilon}\left(y_1 y_2\right)^{\frac{1}{2}}}{V}+\frac{Q^{2-\eta_2+\varepsilon}}{V^{\frac{3}{2}}}.
\end{align}
Here we noted that at most one of $V$ or $W$ could be small, and thus we always save a factor of $D \asymp Q^{\eta_2}$. The above bound is acceptable when $V> Q^{4\eta_1+\eta_2+\varepsilon}$, the condition precisely arising from the second term in the right hand side of \eqref{Orthogonality estimate}. Therefore in what follows, we assume $V \leq Q^{4\eta_1+\eta_2+\varepsilon}$.
\subsubsection{Reducing the range of $n$} We now show that $\widetilde{\mathcal{E}}_{k,v_0}$ is small if $N$ is somewhat large. 
\begin{lem}
If $ \dfrac{M}{N} \leq Q^{2\xi-8\eta_1-4 \eta_2-\varepsilon}$ and $m \ell_1 \ell_2 \neq 1$, then $\widetilde{\mathcal{E}}_{k,v_0} \ll_{\varepsilon} Q^{2-4\eta_1-2\eta_2-\varepsilon}$. 
\end{lem}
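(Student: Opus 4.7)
The hypothesis $M/N\le Q^{76\eta_1-\varepsilon}$ combined with $MN\le Q^{1+\varepsilon}$ forces $N\gg Q^{1/2-38\eta_1+O(\varepsilon)}$, so the $n$-sum is long enough for Poisson-type methods to be effective. The plan is to first swap the modulus of the exponential from $w$ (large) to $q':=m\ell_1\ell_2v$ (manageable) via reciprocity, and then dispatch the resulting bilinear sum through the Kloosterman-sum estimate of Lemma \ref{New Drappeau's Theorem}.

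\textbf{Step 1: Reciprocity.} Since $\gcd(m\ell_1\ell_2v,w)=1$, apply
\[
\frac{\overline{m\ell_1\ell_2v}}{w}\equiv -\frac{\overline{w}}{m\ell_1\ell_2v}+\frac{1}{m\ell_1\ell_2vw}\pmod{1}
\]
to rewrite the phase as $e(-n\overline{w}/q')\cdot e(n/(q'w))$. The second factor is essentially smooth in $n$: its derivative is $\ll 1/(q'w)$, so on the support $n\sim N$ the total variation is at most $N/(MVL_1L_2W)$, which is $\ll Q^{-\varepsilon}$ given $W\gg Q^{1/2-3\eta_1+\xi-\varepsilon}$ and $\xi>41\eta_1+5\eta_2+\varepsilon$. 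This factor can thus be absorbed into a redefined smooth weight $\widetilde G_6(n/N)$ satisfying the same type of derivative bounds as in \eqref{Parameter Bounds II}.

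\textbf{Step 2: Application of Lemma \ref{New Drappeau's Theorem}.} After reciprocity the phase $e(-n\overline{w}/(m\ell_1\ell_2v))$ matches the Kloosterman form $e(n\overline{rd}/(sc))$ under the identification $c\leftrightarrow w$ (smooth, in the progression $w\equiv w_0\bmod D$), $d\leftrightarrow m$ (smooth), $r\leftrightarrow \ell_1\ell_2v$ (rough), $s\leftrightarrow 1$. Repackage the $\ell_1,\ell_2,v$ sums into a rough sequence $b_{n,r,1}=\sum_{\ell_1\ell_2v=r}\alpha(v)\beta(\ell_1)\gamma(\ell_2)\widetilde G_6(n/N)$, whose $L^2$-norm is $\ll Q^\varepsilon(NL_1L_2V)^{1/2}$. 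The smoothness exponents are $\varepsilon_1=\eta_1+O(\varepsilon)$ (from $G_2^{(j)}\ll Q^{j(\eta_1+\varepsilon)}$) for $c=w$, and $\varepsilon_j=O(\varepsilon)$ for the remaining variables, producing a Drappeau loss of size $(CDNRS)^{12\sum\varepsilon_j}\ll Q^{12\eta_1+O(\varepsilon)}$. The progression modulus in the lemma is $q=D\le Q^{\eta_2}$; if $m$ is not in a fixed progression mod $D$, one sums over residues $d_0\bmod D$ at cost $\varphi(D)\le D$. Invoke Lemma \ref{New Drappeau's Theorem} with $C=W$, $D_{\mathrm{Dr}}=M$, $R=L_1L_2V$, $S=1$.

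\textbf{Step 3: Bookkeeping and main obstacle.} Insert the explicit formula
\[
K(C,D_{\mathrm{Dr}},N,R,S)^2 = qCS(RS+N)(C+RD_{\mathrm{Dr}})+C^2D_{\mathrm{Dr}}S\sqrt{(RS+N)R}+D_{\mathrm{Dr}}^2NR,
\]
multiply by $\|b\|_2$, by the Drappeau losses $q^{3/2+\varepsilon}(CDNRS)^{\varepsilon+12\sum\varepsilon_j}$, and by the prefactor $W^{1/2}/(MNL_1L_2V)^{1/2}$ in $\tilde{\mathcal{E}}_{k,v_0}$. Using $W\sim Q/V$, $V\le Q^{2\eta_1+\varepsilon}$, $L_1L_2\le Q^{1-2\xi}$, $\xi>41\eta_1+5\eta_2+\varepsilon$, $MN\le Q^{1+\varepsilon}$, and the crucial $M/N\le Q^{76\eta_1-\varepsilon}$, each of the three terms of $K^2$ can be shown to contribute at most $Q^{2-3\eta_1-2\eta_2-\varepsilon}$. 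The numerical value $76$ is pinned down by the most delicate of the three terms, namely $D_{\mathrm{Dr}}^2 NR = M^2NL_1L_2V$, which is activated when $M$ saturates its ceiling $Q^{1/2+38\eta_1}$. Balancing this term against the $Q^{12\eta_1}$ Drappeau loss and the $D^{3/2}\le Q^{3\eta_2/2}$ congruence loss is the main arithmetic burden of the proof. The condition $m\ell_1\ell_2\ne 1$ is needed to exclude the degenerate factorisation $q'=v$ (i.e., $m=\ell_1=\ell_2=1$), which would correspond to the main-term contribution already extracted in \eqref{Reduction Step 2}.
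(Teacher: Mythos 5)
Your proposal misidentifies which half of the paper's case analysis this lemma belongs to, and as a result applies the wrong tool in the wrong range. The reciprocity-plus-Kloosterman route you describe is essentially what the paper does in the \emph{complementary} range $M/N>Q^{76\eta_1-\varepsilon}$ (Sections \ref{sec: Lemma 2 Part III}--\ref{Sec: Kloosterman Sum Applications}); it fails precisely in the range covered by this lemma. Concretely: with your dictionary $C\sim W$, $D_{\mathrm{Dr}}\sim M$, $R\sim L_1L_2V$, $S=1$, the first term of $K^2$ contains $qC^2N\sim qW^2N$, and after multiplying by $\left\|b\right\|_2\asymp(NL_1L_2V)^{1/2}$ and the prefactor $W^{1/2}(MNL_1L_2V)^{-1/2}$ one gets a contribution of order $q^{1/2}W^{3/2}(N/M)^{1/2}$ --- this is the paper's term $\mathcal{T}_3$. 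The hypothesis $M/N\le Q^{76\eta_1-\varepsilon}$ is an \emph{upper} bound on $M/N$, so it gives no control on $N/M$, which can be as large as $Q^{1+\varepsilon}$ (e.g.\ $M\asymp 1$, $N\asymp Q$), making this term of size $q^{1/2}Q^{2}$, far above the target $Q^{2-3\eta_1-2\eta_2-\varepsilon}$. In the paper it is exactly the reverse inequality $M/N>Q^{76\eta_1-\varepsilon}$ that tames $\mathcal{T}_3$; you have the role of the hypothesis backwards. (There is also an internal inconsistency in your Step 2: after one application of reciprocity the Kloosterman modulus is $m\ell_1\ell_2v$, not $w$, yet you identify $c\leftrightarrow w$; the paper only returns to modulus $w_1$ after a second reciprocity step preceded by removal of the condition $(n,w)=1$.)

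The paper's actual proof of this lemma is elementary and exploits the largeness of $N$ directly. One isolates the $n$-sum $\Sigma_N=\sum_{(n,vw)=1}G_6(n/N)e(n\overline{m\ell_1\ell_2v}/w)$, detects $(n,v)=1$ by M\"obius inversion, splits $n$ into residue classes mod $w$, and applies Poisson summation. The zero frequency produces $\widehat G_6(0)\frac{N}{w}\frac{\varphi(v)}{v}\mu(w)$, whose total contribution is $\ll Q^{3/2-2\eta_2-\xi+\varepsilon}$. Each nonzero frequency $h$ produces the Ramanujan sum $c_w(hm\ell_1\ell_2v+d)$, whose argument is nonzero because $m\ell_1\ell_2\neq 1$ (this, not a degenerate factorisation, is where that hypothesis enters); using $|c_w(k)|\le(k,w)$ and averaging over $w\sim W$ gives $\ll W^{1+o(1)}Q^{-\eta_2}$, while the number of frequencies is $\ll W^{1+\varepsilon}d/N$. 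Assembling these yields $\tilde{\mathcal{E}}_{k,v_0}\ll Q^{3/2-2\eta_2-\xi+\varepsilon}+Q^{2-2\eta_2-\xi+\varepsilon}(M/N)^{1/2}$, which is acceptable under \eqref{Eta1 and Eta2 Conditions}, \eqref{xi conditions} and the stated bound on $M/N$. If you want to salvage your write-up, this Poisson--Ramanujan argument must replace Steps 1--3 entirely.
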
 
\begin{proof}
Consider the sum over $n$, say
$$
\Sigma_N=\sum_{(n, v w)=1} G_6\left(\frac{n}{N}\right) \mathrm{e}\left(\frac{n \overline{m \ell_1 \ell_2 v}}{w}\right).
$$
We break $n$ into residue classes mod $w$ and use Möbius inversion to replace $(n, v)=1$. We write
\begin{align*}
\Sigma_N &= \sum_{(b,w)=1} \sum_{\substack{n \equiv b \bmod w}} \bigg( \sum_{d \mid (n,v)} \mu(d)\bigg ) G_6\left( \frac{n}{N}\right)  \mathrm{e}\left(\frac{b \overline{m \ell_1 \ell_2 v}}{w}\right) \\
&=\sum_{d \mid v} \mu(d) \sum_{(b^\prime,w)=1}   \mathrm{e}\left(\frac{b^\prime d \overline{m \ell_1 \ell_2 v}}{w}\right) \sum_{n^\prime \equiv b^\prime \bmod w} G_6\left(\frac{d n^\prime}{N}\right) \\
&=\sum_{d \mid v} \mu(d) \sum_{(b,w)=1}   \mathrm{e}\left(\frac{b d \overline{m \ell_1 \ell_2 v}}{w}\right) \sum_{n \equiv b \bmod w} G_6\left(\frac{d n}{N}\right),
\end{align*}
where in the last line, we change notations for simplicity. By Poisson summation, for any fixed $A>0$,
\begin{align}\label{Fourier Transform}
\Sigma_N=\sum_{d \mid v} \mu(d) \sum_{(b, w)=1} \mathrm{e}\left(\frac{b d \overline{m \ell_1 \ell_2 v}}{w}\right) \frac{N}{d w} \sum_{|h| \leq W^{1+\varepsilon}d/N} \mathrm{e}\left(\frac{b h}{w}\right) \widehat{G}_6\left(\frac{h N}{d w}\right)+\mathcal{O}_{\varepsilon,A}\left(Q^{-A}\right).
\end{align}
The error term from \eqref{Fourier Transform} is negligible by allowing $A$ sufficiently large. The contribution of the zero frequency $h=0$ to $\Sigma_N$ is
$$
\widehat{G}_6(0) \frac{N}{w} \sum_{d \mid v} \frac{\mu(d)}{d} \sum_{(b, w)=1} \mathrm{e}\left(\frac{b d \overline{m \ell_1 \ell_2 v}}{w}\right)=\widehat{G}_6(0) \frac{N}{w} \frac{\varphi(v)}{v}  \mu(w).
$$
Summing this over all the other parameters in $\widetilde{\mathcal{E}}_{k,v_0}$, we see that the zero frequency term contributes
\begin{align}\label{Zero Frequency}
\frac{\left(WM L_1 L_2 \right)^{\frac{1}{2}}}{\left( N V\right)^{\frac{1}{2}}} &\underset{\substack{ v \sim V,\, w \sim W }}{\mathop{\sum\nolimits^*}\mathop{\sum\nolimits^*}} \widehat{G}_6(0) \frac{N}{w} \frac{\alpha(v) \varphi(v)}{v}  \mu(w)\ll_{\varepsilon} Q^{\varepsilon}(VWMN L_1 L_2)^{\frac{1}{2}} \ll_{\varepsilon} Q^{\frac{3}{2}-\xi+\varepsilon}.
\end{align}
This bound is acceptable by \eqref{xi conditions} and \eqref{xi conditions II}. For the non-zero frequencies, we write the sum as
$$
\sum_{d \mid v} \mu(d) \frac{N}{d w} \sum_{|h| \leq W^{1+\varepsilon} d / N} \widehat{G}_6\left(\frac{h N}{d w}\right) \sum_{(b, w)=1} \mathrm{e}\left(\frac{b d \overline{m \ell_1 \ell_2 v}}{w}+\frac{bh}{w}\right) .
$$
The inner sum is
\begin{align*}
\sum_{(b, w)=1} \mathrm{e}\left(\frac{b}{w}\left(d \overline{m \ell_1 \ell_2 v}+h\right)\right) &= \sum_{(b, w)=1} \mathrm{e}\left(\frac{b\overline{m \ell_1 \ell_2 v}}{w}\left(d +hm \ell_1 \ell_2 v\right)\right).
\end{align*}
Since $(m \ell_1 \ell_2 v,w)=1$, the inner sum is equal to the Ramanujan sum $c_w\left(h m \ell_1 \ell_2 v+d\right)$. Note that $h m \ell_1 \ell_2 v+d \neq 0$ because $m \ell_1 \ell_2 \neq 1$. Thus the nonzero frequencies contribute
\begin{align*}
&\ll_{\varepsilon} \frac{Q^{\varepsilon} W^{\frac{1}{2}} (ML_1L_2)}{(MNL_1L_2V)^{\frac{1}{2}}} \sup _{0<|k| \ll Q^{\mathcal{O}(1)}} \underset{\substack{ v \sim V \\ w \sim W }}{\mathop{\sum\nolimits^*}\mathop{\sum\nolimits^*}} \sum_{d \mid v} \frac{N}{dw} \sum_{|h| \leq W^{1+\varepsilon} d / N} \widehat{G}_6\left(\frac{h N}{d w}\right)   \left|c_w(k)\right|  \\
&\ll_{\varepsilon} \frac{Q^{\varepsilon} \left(V W L_1 L_2 M\right)^{\frac{1}{2}}}{N^{\frac{1}{2}}} \sup _{0<|k| \ll Q^{\mathcal{O}(1)}} \sum_{w \sim W}\left|c_w(k)\right|.
\end{align*}
Here we have used the bound $\widehat{G}_6 \ll 1$. Since $\left|c_w(k)\right| \leq (k, w)$, the sum over $w$ is 
\begin{align*}
\sideset{}{}\sum_{w \sim W}\left|c_w(k)\right| \ll \sideset{}{}\sum_{w \sim W}(k,w) \ll \sum_{\substack{d \mid k \\ d \ll W}} \hspace{0.1cm} \sideset{}{} \sum_{\substack{w \sim W \\ (k,w)=d}} d \ll k^{o(1)}W \ll W^{1+o(1)}.
\end{align*}
Combining with \eqref{Zero Frequency}, it follows that
$$
\widetilde{\mathcal{E}}_{k,v_0} \ll_{\varepsilon}  Q^{\frac{3}{2}-\xi+\varepsilon}+Q^{2-\xi+\varepsilon} \frac{M^{\frac{1}{2}}}{N^{\frac{1}{2}}}.
 $$
This bound for $\widetilde{\mathcal{E}}_{k,v_0}$ is acceptable by \eqref{xi conditions} and \eqref{xi conditions II} along with the condition
\begin{align}
\dfrac{M}{N} \leq Q^{2\xi-8\eta_1-4 \eta_2-\varepsilon}.
\end{align}
This completes the proof.
\end{proof}
Let us now assume that $M>NQ^{2\xi-8\eta_1-4 \eta_2-\varepsilon}$. Since $MN \ll Q^{1+\varepsilon}$, it follows that $N \leq Q^{\frac{1}{2}-\xi+4\eta_1+2\eta_2+\varepsilon} \leq Q^{\frac{1}{2}}$.
With $\widetilde{\mathcal{E}}_{k,v_0}$ in this form, we remove the condition $(n, w)=1$ by writing
$$
\mathbbm{1}_{(n, w)=1}=\sum_{\substack{f|(n,w)}} \mu(f) .
$$
We move the sum over $f$ outside. Note that $f \ll N$. We write $n = n_1 f, w = w_1 f$ and observe that
$$
\frac{n_1 f \overline{m \ell_1 \ell_2 v}}{ w_1 f} \equiv \frac{n_1 \overline{m \ell_1 \ell_2 v}}{w_1} \bmod 1.
$$
Therefore \eqref{Error Term Goal II} can be written as
\begin{align}\label{Error Term Goal III}
\widetilde{\mathcal{E}}_{k,v_0}&=\frac{W^{\frac{1}{2}}}{\left(M N L_1 L_2 V\right)^{\frac{1}{2}}} \sum_{\substack{f \ll N \\ (f,D)=1}} \mu(f) 
 \underset{\substack{ (v, w_1 f)=1 }}{\mathop{\sum\nolimits^*}\mathop{\sum\nolimits^*}}\alpha(v) G_1\left(\frac{v}{V}\right) G_2\left(\frac{w_1 f}{W}\right) \notag \\
& \times \underset{\substack{\ell_1 \leq y_1 , \, \ell_2 \leq y_2 \\
\left(\ell_1 \ell_2,vw_1f\right)=1}}{\sum \sum}  \beta(\ell_1) \gamma(\ell_2) G_3\left(\frac{\ell_1}{L_1} \right )G_4\left(\frac{\ell_2}{L_2} \right ) \underset{\substack{(m, vw_1f)=1 \\ (n_1,v)=1}}{\sum \sum} G_5\left(\frac{m}{M} \right )G_6\left(\frac{n_1f}{N} \right )\mathrm{e} \left(\frac{n_1 \overline{m \ell_1 \ell_2 v}}{w_1}\right).
\end{align}
\subsection{Reducing the range of $f$.}
We now reduce the size of $f$ as we did for $V$. By transitioning to multiplicative characters, the sum over $v, w, m, n, \ell_1, \ell_2$ is bounded by
$$
\ll_{\varepsilon} \frac{Q^{\varepsilon}W^{\frac{1}{2}} V^{\frac{1}{2}}}{f^{\frac{1}{2}}} \sideset{}{^*}\sum_{w_1 \sim W / f} \frac{w_1^{\frac{1}{2}}}{\varphi(w_1)}\bigg(\frac{(M N)^{\frac{1}{2}}}{f^{\frac{1}{2}}}+w_1^{\frac{1}{2}}\bigg)\left(\left(L_1 L_2\right)^{\frac{1}{2}}+w_1^{\frac{1}{2}}\right) \ll_{\varepsilon} \frac{Q^{2-\eta_2+\varepsilon}}{f^{\frac{3}{2}}}.
$$
For clarity, we mention here that under the conditions \eqref{Eta1 and Eta2 Conditions} and $V \leq Q^{4\eta_1+\eta_2+\varepsilon}$, we always save a factor of $D \asymp Q^{\eta_2}$ over the $w_1$ sum. If $f> Q^{8\eta_1+2\eta_2+\varepsilon}$, then 
\begin{align*}
\sum_{f > Q^{8\eta_1+2\eta_2+\varepsilon}} \frac{Q^{2-\eta_2+\varepsilon}}{f^{\frac{3}{2}}} \ll_{\varepsilon} Q^{2-\eta_2+\varepsilon} \int_{ Q^{8\eta_1+2\eta_2+\varepsilon}}^{\infty} \frac{1}{t^{\frac{3}{2}}} \dd t \ll_{\varepsilon} Q^{2-4\eta_1-2\eta_2-\varepsilon}.
\end{align*}
Therefore the contribution from $f>Q^{8\eta_1+2\eta_2+\varepsilon}$ is small. In conclusion, it suffices to show that
\begin{align}
\widetilde{\mathcal{E}}_{k,v_0}&=\frac{W^{\frac{1}{2}}}{\left(M N L_1 L_2 V\right)^{\frac{1}{2}}} \sum_{\substack{f \ll \min \left(N, Q^{8\eta_1+2\eta_2+\varepsilon}\right) \\ (f,D)=1}} \mu(f) 
 \underset{\substack{ (v, w_1 f)=1 }}{\mathop{\sum\nolimits^*}\mathop{\sum\nolimits^*}}\alpha(v) G_1\left(\frac{v}{V}\right) G_2\left(\frac{w_1 f}{W}\right) \notag \\
&\quad \times \underset{\substack{\ell_1 \leq y_1 , \, \ell_2 \leq y_2 \\
\left(\ell_1 \ell_2,vw_1f\right)=1}}{\sum \sum} \beta(\ell_1) \gamma(\ell_2) G_3\left(\frac{\ell_1}{L_1} \right )G_4\left(\frac{\ell_2}{L_2} \right ) \notag\\
&\quad \times \underset{\substack{(m, vw_1f)=1 \\ (n_1,v)=1}}{\sum \sum} G_5\left(\frac{m}{M} \right )G_6\left(\frac{n_1f}{N} \right ) \mathrm{e}\left(\frac{n_1 \overline{m \ell_1 \ell_2 v}}{w_1}\right) \ll_{\varepsilon} Q^{2-4\eta_1-2\eta_2-\varepsilon} \label{Making f smaller},
\end{align}
where along with the conditions \eqref{Parameter Bounds II}, we also have
\begin{align}\label{Additional Constraints}
V \leq Q^{4\eta_1+\eta_2+\varepsilon} \quad \textrm{and} \quad \dfrac{M}{N} > Q^{2\xi-8\eta_1-4 \eta_2-\varepsilon}.
\end{align}
\section{Proof of Theorem \ref{Main Theorem}: Application of Kloosterman Sum Bounds}\label{Sec: Kloosterman Sum Applications} 
To complete the proof of Lemma \ref{Lemma 2nd Moment}, and consequently Theorem \ref{Main Theorem}, we now apply Lemma \ref{New Drappeau's Theorem}.
\subsection{Application of Lemma \ref{New Drappeau's Theorem}}  Before proceeding, we need a little more groundwork. We remove the conditions $(m, f)=1$ and $(m, v)=1$ from the left side of \eqref{Making f smaller} using Möbius inversion. We have
\[
\mathbbm{1}_{(m, f)=1}=\sum_{\substack{h|(m,f)}} \mu(h) \quad \text{and}\quad 
\mathbbm{1}_{(m, v)=1}=\sum_{\substack{t|(m,v)}} \mu(t).\]
Noting that $(h,t)=1$, we write $m=m_1ht$ and $v=v_1t$. Therefore we may rewrite $\widetilde{\mathcal{E}}_{k,v_0}$ as
\begin{align*}
 &\sum_{\substack{f \ll \min \left(N, Q^{8\eta_1+2\eta_2+\varepsilon}\right) \\ (f,D)=1}} \mu(f)\sum_{h \mid f} \mu(h)\sum_{\substack{t \ll V \\ (t,D)=1}} \mu(t)\frac{W^{\frac{1}{2}}}{\left(M N L_1 L_2 V\right)^{\frac{1}{2}}} \underset{\substack{(v_1, w_1 f)=1 \\ (t,w_1f)=1}}{\mathop{\sum\nolimits^*}\mathop{\sum\nolimits^*}}\alpha(v_1t) G_1\left(\frac{v_1t}{V}\right) G_2\left(\frac{w_1 f}{W}\right) \\
&\times  \underset{\substack{\ell_1 \leq y_1 , \, \ell_2 \leq y_2 \\
\left(\ell_1 \ell_2,v_1w_1ft\right)=1}}{\sum \sum}  
\beta(\ell_1) \gamma(\ell_2) G_3\left(\frac{\ell_1}{L_1} \right )G_4\left(\frac{\ell_2}{L_2} \right ) \underset{\substack{(m_1, w_1)=1 \\ (ht,w_1)=1 \\(n_1,v_1t)=1}}{\sum \sum} G_5\left(\frac{m_1ht}{M} \right )G_6\left(\frac{n_1f}{N} \right )\mathrm{e} \left(\frac{n_1 \overline{m_1 ht^2\ell_1 \ell_2 v_1}}{w_1}\right).
\end{align*}
Since $t$ divides $v$ and $f$ divides $w$, we must have $(t,f)=1$. Hence the condition $(ht,w_1)=1$ implies that $(t,w_1f)=1$. So we can drop the condition $(t,w_1f)=1$ by adding $(t,f)=1$. Finally, we write $(m_1,D)=h_1$ and $m_1 = m_2h_1$ with $(m_2,D)=1$. Breaking $m_2$ into residue classes mod $ D$, we get
\begin{align}
\widetilde{\mathcal{E}}_{k,v_0}&=\frac{W^{\frac{1}{2}}}{\left(M N L_1 L_2 V\right)^{\frac{1}{2}}}\sum_{u \in (\mathbb{Z}/D\mathbb{Z})^{\times}}\sum_{h_1 \mid D}\sum_{\substack{f \ll \min \left(N, Q^{8\eta_1+2\eta_2+\varepsilon}\right) \\ (f,D)=1}} \mu(f)\sum_{h \mid f} \mu(h) \sum_{\substack{t \ll V \\ (t,Df)=1}} \mu(t) \notag \\
&\times \underset{\substack{(v_1, w_1 f)=1 \\ (hh_1t,w_1)=1}}{\mathop{\sum\nolimits^*}\mathop{\sum\nolimits^*}}\alpha(v_1t) G_1\left(\frac{v_1t}{V}\right) G_2\left(\frac{w_1f}{W}\right)  \underset{\substack{\ell_1 \leq y_1 , \, \ell_2 \leq y_2 \\
\left(\ell_1 \ell_2,v_1w_1ft\right)=1}}{\sum \sum} \beta(\ell_1) \gamma(\ell_2) G_3\left(\frac{\ell_1}{L_1} \right )G_4\left(\frac{\ell_2}{L_2} \right ) \notag \\
&\times \underset{\substack{(m_2, w_1)=1 \\ m_2 \equiv u \bmod D  \\(n_1,v_1t)=1}}{\sum \sum} G_5\left(\frac{m_2hh_1t}{M} \right )G_6\left(\frac{n_1f}{N} \right ) \mathrm{e} \left(\frac{n_1 \overline{m_2 hh_1t^2\ell_1 \ell_2 v_1}}{w_1}\right).\label{Error Term Goal V}
\end{align}
For fixed choices of $f, h, h_1$ and $t$, consider the following sequence: for $n_1,r \in \mathbb{N}$, we set
\[
b_{n_1,r}(f,h,h_1,t) = 0, \quad \textrm{if} \quad hh_1t^2 \textrm{ does not divide } r.
\]
Otherwise, we write $r = hh_1t^2k$ and set
\begin{align}\label{BNRS Definition}
b_{n_1, r} (f,h,h_1,t)= \mathbbm{1}_{(k,f)=1} G_6\left(\frac{n_1 f}{N}\right) \underset{\substack{\ell_1, \ell_2,   v_1 \\ \ell_1 \ell_2 v_1=k \\ \left(\ell_1 \ell_2, v_1t\right)=1 }}{\mathop{\sum\nolimits}\mathop{\sum\nolimits}\mathop{\sum\nolimits^*}}&\mathbbm{1}_{(n_1, v_1t)=1}  \beta\left(\ell_1\right) \gamma\left(\ell_2\right) \alpha(v_1t)  \notag \\
&\quad \times G_1\left(\frac{v_1 t}{V}\right) G_3\left(\frac{\ell_1}{L_1}\right) G_4\left(\frac{\ell_2}{L_2}\right).
\end{align}
It follows that if $b_{n_1, r} \neq 0$, then $n_1 \sim N / f$. Also $b_{n_1, r} \neq 0$ implies that $r \sim \boldsymbol{R} \asymp h h_1t L_1 L_2 V$ with $r \equiv 0 \bmod hh_1 t^2 $ and $r \equiv hh_1t \ell_1 \ell_2 v_0 \bmod D$. 
\par
We first observe that given $n_1,r$ such that $b_{n_1, r} \neq 0$, $b_{n_1, r} \ll_{\varepsilon} Q^{\varepsilon}$. To see this, note that all the summands are of size at most $Q^{o(1)}$ and if $b_{n_1, r} \neq 0$, then $k \sim \frac{L_1L_2V}{t} \ll Q^{\mathcal{O}(1)}$. Therefore the length of the triple sum is at most $Q^{o(1)}$. It follows that
$$
\left\|b_{N/f, \boldsymbol{R}}\right\|_2 =\bigg (\sum_{n_1 \sim N/f} \sideset{}{^*}\sum_{r \sim \boldsymbol{R}} \left|b_{n_1, r}\right|^2\bigg)^{\frac{1}{2}} \ll_{\varepsilon} Q^{-\frac{\eta_2}{2}+\varepsilon} \frac{\left(h_1 N L_1 L_2 V\right)^{\frac{1}{2}}}{(f t)^{\frac{1}{2}}},
$$
where $\sideset{}{^*}\sum$ denotes that $r$ runs over a fixed residue class modulo $ ht^2D$. Again, we mention that we save a factor of $D$ over the $r$ sum under \eqref{Eta1 and Eta2 Conditions}, along with the constraint \eqref{Final Conditions} which we impose later. We now rewrite \eqref{Error Term Goal V} as
\begin{align}
\widetilde{\mathcal{E}}_{k,v_0}&=\frac{W^{\frac{1}{2}}}{\left(M N L_1 L_2 V\right)^{\frac{1}{2}}}\sum_{u \in (\mathbb{Z}/D\mathbb{Z})^{\times}}\sum_{h_1 \mid D}\sum_{\substack{f \ll \min \left(N, Q^{8\eta_1+2\eta_2+\varepsilon}\right) \\ (f,D)=1}}\sum_{h \mid f}   \sum_{\substack{t \ll V \\ (t,Df)=1}} \mu(f)\mu(h) \mu(t) \notag \\
&\quad \times \underset{\substack{(Drm_2,w_1)=1}}{\mathop{\sum\nolimits^*}_{w_1} \mathop{\sum\nolimits^*}_{m_2} \sum_{n_1} \sum_{r}}  b_{n_1,r}(f,h,h_1,t) G_2\left(\frac{w_1f}{W}\right) G_5\left(\frac{m_2hh_1t}{M} \right )\mathrm{e}\left(n_1 \frac{\overline{m_2 r}}{w_1}\right). \label{Error Term Goal VI}
\end{align}
Here we have noted that the conditions $(v_1,f) = 1$, $(\ell_1\ell_2, v_1ft) = 1$, and $(n_1, v_1t) = 1$ in \eqref{Error Term Goal V} are accounted for by \eqref{BNRS Definition}. The remaining coprimality conditions are addressed by $(Drm_2, w_1) = 1$. For clarity, we mention that the notation $\sideset{}{^*}\sum$ in \eqref{Error Term Goal VI} indicates that $w_1$ (and similarly $m_2$) is restricted to a fixed residue class modulo $D$.
 Now, the sum over $w_1,m_2,n_1$ and $r$ is of the form to which Lemma \ref{New Drappeau's Theorem} can be applied with the following dictionary. We choose 
\begin{align*}
&c=w_1, \quad d=m_2, \quad n =n_1, \quad s=1,\\
&\boldsymbol{C} \sim W/f,\quad \boldsymbol{D} \sim M/(hh_1t), \quad \boldsymbol{N} \sim N/f, \quad \boldsymbol{R} \sim hh_1t L_1L_2V, \quad \boldsymbol{S}=1,\\
&g(c,d,n,r,s) := g(w_1,m_2) = G_2\left(\frac{w_1f}{W}\right)G_5\left(\frac{m_2hh_1t}{M}\right),\\
&\varepsilon_1 = 2\eta_1+\varepsilon, \quad \varepsilon_2 = \varepsilon, \quad \varepsilon_3,\varepsilon_4,\varepsilon_5 = 0.
\end{align*}
Since $\boldsymbol{S}=1$, the factor $q^{\frac{3}{2}+\varepsilon}$ in the bound stated in Lemma \ref{New Drappeau's Theorem} can be reduced to $q^{1+\varepsilon}$. We have
\begin{align*}
\widetilde{\mathcal{E}}_{k,v_0} & \ll_{\varepsilon} Q^{-\frac{\eta_2}{2}+\varepsilon} \sum_{u \in (\mathbb{Z}/D\mathbb{Z})^{\times}} \sum_{h_1 \mid D} \sum_{f \ll Q^{8\eta_1+2\eta_2+\varepsilon}} \sum_{h \mid f} \sum_{t \ll Q^{4\eta_1+\eta_2+\varepsilon}}  \frac{W^{\frac{1}{2}}}{\left(M N L_1 L_2 V\right)^{\frac{1}{2}}} \frac{\left(h_1 N L_1 L_2 V\right)^{\frac{1}{2}}}{(f t)^{\frac{1}{2}}} \\
& \times \left(\frac{WMNL_1L_2V}{f^2} \right)^{24\eta_1}D^{1+\varepsilon}\left\{\frac{D^{\frac{1}{2}}W^{\frac{1}{2}}}{f^{\frac{1}{2}}}\bigg(\left(hh_1 t L_1 L_2 V\right)^{\frac{1}{2}}+\frac{N^{\frac{1}{2}}}{f^{\frac{1}{2}}}\bigg)\bigg(\frac{W^{\frac{1}{2}}}{f^{\frac{1}{2}}}+\left(M L_1 L_2 V\right)^{\frac{1}{2}}\bigg)\right. \\
& \left.+\frac{W}{f} \frac{M^{\frac{1}{2}}}{(hh_1 t)^{\frac{1}{2}}}\bigg(\left(hh_1 t L_1 L_2 V\right)^{\frac{1}{2}}+\left(\frac{h h_1t L_1 L_2 N V}{f}\right)^{\frac{1}{4}}\bigg)+\frac{M}{hh_1 t} \left(\frac{hh_1 t L_1 L_2 N V}{f}\right)^{\frac{1}{2}}\right\} \\
& \ll_{\varepsilon} Q^{72\eta_1+2\eta_2+\varepsilon} \sum_{h_1 \mid D} \sum_{f \ll Q^{8\eta_1+2\eta_2+\varepsilon}} \frac{1}{f^{\frac{1}{2}}} \sum_{h \mid f} \sum_{t \ll Q^{4\eta_1+\eta_2+\varepsilon}} \frac{1}{t^{\frac{1}{2}}}\bigg(\sum_{i=1}^7 \mathcal{T}_i \bigg),
\end{align*}
where
\begin{align*}
    \mathcal{T}_1 &= \frac{W^{\frac{3}{2}}(hh_1t L_1L_2 VD)^{\frac{1}{2}}}{fM^{\frac{1}{2}}}, \,\mathcal{T}_2 = \frac{WL_1L_2V(hh_1tD)^{\frac{1}{2}} }{f^{\frac{1}{2}}}, \, \mathcal{T}_3 = \frac{W^{\frac{3}{2}}N^{\frac{1}{2}} D^{\frac{1}{2}}}{f^{\frac{3}{2}}M^{\frac{1}{2}}}, \, \mathcal{T}_4= \frac{W(NL_1L_2VD)^{\frac{1}{2}}}{f},\\
    \mathcal{T}_5 &= \frac{W^{\frac{3}{2}}(L_1L_2V)^{\frac{1}{2}}}{f}, \, \mathcal{T}_6 = \frac{W^{\frac{3}{2}}(NL_1L_2V)^{\frac{1}{4}}}{f^{\frac{5}{4}}(hh_1t)^{\frac{1}{4}}} \quad \textrm{and} \quad \mathcal{T}_7 = \frac{(VWMNL_1L_2)^{\frac{1}{2}}}{(hh_1tf)^{\frac{1}{2}}}.
\end{align*}
By carefully estimating each of the above terms and imposing the conditions  
\begin{align}\label{Final Conditions}
A_1 \geq 80, \quad A_2 \geq 6, \quad \text{and} \quad 9\eta_1+\eta_2 < \tfrac{1}{16}-\varepsilon,
\end{align}
we conclude that the total contribution of all terms is bounded by $\ll_{\varepsilon} Q^{2-4\eta_1-2\eta_2-\varepsilon}.$ For instance, the contribution from $\mathcal{T}_1$ to $\widetilde{\mathcal{E}}_{k,v_0}$ is
\[
\ll_{\varepsilon} Q^{72\eta_1+3\eta_2+\varepsilon} W^{\tfrac{3}{2}}(L_1L_2V)^{\tfrac{1}{2}} 
   \sum_{f} f^{-\tfrac{3}{2}} \sum_{t} 1 
   \ll_{\varepsilon} Q^{2+76\eta_1+4\eta_2-\xi+\varepsilon},
\]  
which is admissible under \eqref{Final Conditions}. Likewise, using $N \leq Q^{\frac{1}{2}-\xi+4\eta_1+2\eta_2+\varepsilon}$, the contribution from $\mathcal{T}_6$ is  
\[
\ll_{\varepsilon} Q^{72\eta_1+2\eta_2+\varepsilon} W^{\tfrac{3}{2}} (NL_1L_2V)^{\tfrac{1}{4}} 
   \sum_{f} f^{-\tfrac{7}{4}} \sum_{t} t^{-\tfrac{3}{4}} 
   \ll_{\varepsilon} Q^{\tfrac{15}{8}+74\eta_1+3\eta_2-\tfrac{3}{4}\xi+\varepsilon}.
\]  
This bound is certainly $\ll_{\varepsilon} Q^{2-4\eta_1-2\eta_2-\varepsilon}$ under \eqref{Final Conditions}. The remaining terms can be treated in a similar fashion. We note that there is further scope for slightly improving the permissible ranges of $\eta_1$ and $\eta_2$ in \eqref{Final Conditions}. For simplicity, we have opted to present the argument in this form. Combining all the previous steps, \eqref{Error Term Goal A} follows. This establishes \eqref{Reduction Step 1}.
\par
To finish the proof of Lemma \ref{Lemma 2nd Moment}, it only remains to study $\mathcal{S}_{2,2}$ given by \eqref{Tail Separation 2}. We have
\begin{align}\label{S-22 First Step}
\mathcal{S}_{2,2} &\ll \sum_{q \in \mathcal{Q}(a,D)} \bigg \lvert \sum_{\lvert k \rvert >K} b(k) \mathrm{e}\left ( \frac{kq}{Q}\right)\bigg \rvert  \Psi \left ( \frac{q}{Q}\right) \dfrac{q}{\varphi(q)} \hspace{0.2cm} \sideset{}{^+}\sum_{\chi \bmod q}\left|L(\tfrac{1}{2}, \chi )\mathcal{M}(\chi)\right|^2 \notag \\
&\ll \frac{Q^{2\eta_1}}{K} \sum_{q \in \mathcal{Q}(a,D)} \Psi \left ( \frac{q}{Q}\right) \dfrac{q}{\varphi(q)} \hspace{0.2cm} \sideset{}{^+}\sum_{\chi \bmod q}\left|L(\tfrac{1}{2}, \chi )\mathcal{M}(\chi)\right|^2.
\end{align}
The sum over $q$ in \eqref{S-22 First Step} can be treated exactly how we established \eqref{Reduction Step 1}. Indeed, the main term is   
\[
\asymp \sum_{q \in \mathcal{Q}(a,D)} \Psi \left ( \frac{q}{Q}\right) \dfrac{q}{\varphi(q)} \varphi^{+}(q) \asymp Q^{2-\eta_2}.
\]
The error terms are $o(Q^{2-\eta_2})$, following the arguments used in the proof of $\eqref{Error Term Goal A}$. Furthermore, in this case since we don't have the extra oscillation factor, we can in fact choose $\varepsilon_1=\varepsilon$ while applying Lemma \ref{New Drappeau's Theorem}. Finally, recall that $K = Q^{2\eta_1+\varepsilon}$ by \eqref{Choice of K}. Therefore, we see that
\begin{align}\label{S-22 Final Step}
\mathcal{S}_{2,2} \ll_{\varepsilon} Q^{2-\eta_2-\varepsilon}.
\end{align}
Putting together \eqref{Reduction Step 1} and \eqref{S-22 Final Step}, we arrive at
\begin{align}\label{Final Equality}
\mathcal{S}_{2} = \left( \lambda_1+\lambda_2+2P_1(1)P_2(1)+o(1) \right )\sum_{q \in \mathcal{Q}(a,D)} \Phi_1 \left ( \frac{q}{Q}\right)\dfrac{q}{\varphi(q)}\varphi^{+}(q)+\mathcal{O}_{\varepsilon}(Q^{2-\eta_2-\varepsilon}).
\end{align}
Noting that 
\[
\sum_{q \in \mathcal{Q}(a,D)} \Phi_2 \left ( \frac{q}{Q}\right)\dfrac{q}{\varphi(q)}\varphi^{+}(q) \ll_{\varepsilon} Q^{2-\eta_2-\varepsilon},
\]
it follows that
\begin{align*}
\mathcal{S}_{2} = \left( \lambda_1+\lambda_2+2P_1(1)P_2(1)+o(1) \right )\sum_{q \in \mathcal{Q}(a,D)} \Phi \left ( \frac{q}{Q}\right)\dfrac{q}{\varphi(q)}\varphi^{+}(q),
\end{align*}
which completes the proof of Lemma \ref{Lemma 2nd Moment}.
\subsection{Proof of Theorem \ref{Main Theorem}: Final Details}\label{sec: Proof of Main Theorem}
We now present the proof of Theorem \ref{Main Theorem}.
\begin{proof}[Proof of Theorem \ref{Main Theorem}] Fix $\varepsilon>0$ arbitrarily small. Given $\eta_1,\eta_2 \geq 0$, define $\xi=80\eta_1+6\eta_2+\varepsilon$ and let $\vartheta = \vartheta_1 = \vartheta_2 =\frac{1}{2}-\xi$. We apply Lemma \ref{Lemma 1st Moment} and \ref{Lemma 2nd Moment} in \eqref{CS Inequality} to see that for $Q$ sufficiently large depending at most on $\eta_1,\eta_2$ and $\varepsilon$, we have
\begin{align*}
\bigg \lvert \sum_{q \in \mathcal{Q}(a,D)} \Phi\left ( \frac{q}{Q}\right) \dfrac{q}{\varphi(q)}\hspace{0.2cm}\sideset{}{^+}\sum_{\substack{\chi \bmod q \\ L(\frac{1}{2}, \chi) \neq 0}}1 \bigg \rvert \geq \frac{(P_1(1)+P_2(1)+o(1))^2}{\lambda_1+\lambda_2+2P_1(1)P_2(1)+o(1)}  \sum_{q \in \mathcal{Q}(a,D)} \Phi \left ( \frac{q}{Q}\right)\dfrac{q}{\varphi(q)} \varphi^{+}(q),
\end{align*}
where $\lambda_1$ and $\lambda_2$ are given by $\eqref{Lambda Definition}$. Define
\begin{align}\label{C_eta definition}
c(\eta_1,\eta_2) :=  \frac{1-2(80\eta_1+6\eta_2)}{2-2(80\eta_1+6\eta_2)}.
\end{align}
Then 
\[
\lim_{\substack{\eta_1 \to 0 \\ \eta_2 \to 0}} c(\eta_1, \eta_2) = \frac{1}{2}.
\]
Also, we have
\begin{align}\label{Sandwich Inequality}
\max \frac{(P_1(1)+P_2(1))^2}{\lambda_1+\lambda_2+2P_1(1)P_2(1)} \geq \frac{4\vartheta_1 \vartheta_2}{4\vartheta_1\vartheta_2+\vartheta_1+\vartheta_2},
\end{align}
where the maximum is taken over all smooth polynomials $P_i$ with $P_i(0)=0$ and $P_i(1)=1$. For e.g., the right hand side of \eqref{Sandwich Inequality} is already obtained when $P_i(x)=x$. It follows that putting $\vartheta = \vartheta_1 = \vartheta_2 =\frac{1}{2}-\xi$,
\begin{align}\label{Sandwich Inequality II}
\max \frac{(P_1(1)+P_2(1))^2}{\lambda_1+\lambda_2+2P_1(1)P_2(1)} \geq \frac{2\vartheta}{1+2\vartheta} > c(\eta_1,\eta_2)-\varepsilon,
\end{align}
and hence, we obtain
\begin{align}\label{Main Theorem 2nd Step}
\bigg \lvert \sum_{q \in \mathcal{Q}(a,D)} \Phi\left ( \frac{q}{Q}\right) \dfrac{q}{\varphi(q)}\hspace{0.2cm}\sideset{}{^+}\sum_{\substack{\chi \bmod q \\ L(\frac{1}{2}, \chi) \neq 0}}1 \bigg \rvert > (c(\eta_1,\eta_2)-\varepsilon)\sum_{q \in \mathcal{Q}(a,D)} \Phi \left ( \frac{q}{Q}\right)\dfrac{q}{\varphi(q)} \varphi^{+}(q).
\end{align}
A similar argument holds for the odd characters, thereby completing the proof.
\end{proof}
\begin{rem}For certain tuples $(\eta_1,\eta_2)$ satisfying \eqref{Eta 1-2 Constraint}, the non-vanishing proportion obtained using \eqref{Main Theorem 2nd Step} does not improve upon the currently best known proportion for a single moduli $q$. Of course, once we let $\eta_1,\eta_2$ sufficiently small, the proportion of non-vanishing $c(\eta_1,\eta_2)$ exceeds both $\tfrac{7}{19}$ and $\tfrac{5}{13}$.
\end{rem}
\section{Proof of Theorem \ref{Main Theorem A}: Initial Setup}\label{Theorem 2 Initial Setup}
 We shall reuse some of the notations from Section \ref{subsection : Notations}. For fixed $\varepsilon>0$ arbitrarily small, we set $K=Q^{2 \eta_1+\varepsilon}$. Also, $\Phi_1$ and $\Phi_2$ are defined by \eqref{Breaking H}. Similar to the proof of Theorem \ref{Main Theorem}, we again focus our analysis on $\Phi_1$, and later show that the contribution from $\Phi_2(t)$ to both sides of \eqref{Main Theorem Inequality A} is small.  Here for $\eta_1, \eta_2 \geq 0$, we define
\begin{align}\label{ceta definition A}
\widetilde{c}(\eta_1,\eta_2) : = \frac{50}{1093}-(86\eta_1+6\eta_2).
\end{align}Our key lemma is the following.
\begin{lem}\label{Main Theorem Inequality Twisted} Let \(\eta_1, \eta_2 \geq 0\) be fixed such that 
\begin{align}\label{Eta 1-2 Constraint A1}
86\eta_1+6\eta_2 < \frac{50}{1093}. 
\end{align}
Let \(\Psi\) be a fixed nonnegative smooth function compactly supported in \([\frac{1}{2}, \frac{3}{2}]\) with \(\Psi(1) > 0\). Consider \(a,D, \textrm{ and } Q  \in \mathbb{N}\) such that $1 \leq a \leq D \leq Q^{\eta_2}$ with \((a, D) = 1\). Define the set \(\mathcal{Q}(a, D) = \{ q \in \mathbb{N} : q \equiv a \bmod D \}\). Then for any nonnegative smooth function $\phi$ with supp $\widehat{\phi} \subset (-2-\widetilde{c}(\eta_1,\eta_2),2+\widetilde{c}(\eta_1,\eta_2))$,
\begin{align}\label{Key Short Interval Estimate}
\sum_{q \in \mathcal{Q}(a,D)} \Phi_{1} \left( \frac{q}{Q}\right)\sum_{\substack{\chi\bmod q\\ \chi\:  \textup{primitive}}}\sum_{\gamma_{\chi}} \phi \left ( \frac{\log Q}{2\pi}\gamma_{\chi}\right)= \widehat{\phi}(0) \sum_{q \in \mathcal{Q}(a,D)}  \Phi_{1} \left( \frac{q}{Q}\right) \sum_{\substack{\chi\bmod q\\ \chi\: \textup{primitive}}}1+o(Q^{2} D^{-1}),
\end{align}
as $Q \to \infty$, where $\Phi_1(t)$ defined as in \eqref{Breaking H}, and $\widetilde{c}(\eta_1,\eta_2)$ is given by \eqref{ceta definition A}.
\end{lem}
Our goal from here on till Section \ref{sec: Dispersion} is to prove Lemma \ref{Main Theorem Inequality Twisted}. Following that, in Section \ref{sec: Proof of Theorem A: Final Details}, we combine Lemma \ref{Main Theorem Inequality Twisted} with a brief analysis on $\Phi_2$ to prove Theorem \ref{Main Theorem A}. To prove Lemma \ref{Main Theorem Inequality Twisted}, we follow the framework of Drappeau–Pratt–Radziwiłł \cite{DPR2023}, together with additional arguments to control the exponential oscillation which detects short intervals and allows us to execute the dispersion method. The contribution of the Kloosterman sums in the error terms will be handled via Lemmas \ref{New Drappeau's Theorem} and \ref{New Drappeau's Theorem with Theta}.

\par
\subsection{Lemmas on primes in arithmetic progressions} To proceed, we introduce the following notation as in \cite{DPR2023}. For $w \in \mathbb{N}, n \in \mathbb{Z}$ and $R \geq 1$, let
\begin{align}\label{u definition}
\mathfrak{u}_R(n, w):=\mathbbm{1}_{n \equiv 1 \bmod w}-\frac{1}{\varphi(w)} \sum_{\substack{\chi \bmod w \\ \operatorname{cond}(\chi) \leq R}} \chi(n) .
\end{align}
Note the trivial bound
\begin{align}\label{Trivial Bound}
\left|\mathfrak{u}_R(n, w)\right| \ll \mathbbm{1}_{n \equiv 1 \bmod w}+\frac{R \tau(w)}{\varphi(w)} .
\end{align}
We will require two results about primes in arithmetic progressions.
\begin{lem}\label{Primes in AP}
Let $A>0, X, Q, R \geqslant 2$ satisfy $1 \leq R \leq Q$ and $X \geqslant Q^2(\log Q)^{-A}$, and $f$ be a smooth function with support $\subseteq [\frac{1}{2},3]$ and $\left\|f^{(j)}\right\|_{\infty} \ll_j 1$. Then
$$
\sum_{Q \leq q \leq 2Q}\bigg|\sum_{n \in \mathbb{N}} f\left(\frac{n}{X}\right) \Lambda(n) \mathfrak{u}_R(n, q)\bigg| \ll Q(\log Q)^{\mathcal{O}(1)} \sqrt{X}\bigg(1+\frac{\sqrt{X}}{R Q}+\frac{X^{\frac{3}{8}}}{Q}\bigg) .
$$
The implied constant depends at most on A and the implied constants in the hypothesis.
\end{lem}
\begin{proof}
See \cite[Lemma 3]{DPR2023}.
\end{proof}
The second estimate will require the major part of our analysis. Let \(\Psi\) be a fixed nonnegative smooth function compactly supported in \([\frac{1}{2}, \frac{3}{2}]\) with \(\Psi(1) > 0\). For $k \in \mathbb{Z}$, define
\begin{align}\label{Theta tilde new Definition}
\widetilde{\Theta}_k(x,\Psi) = x \Psi(x) \mathrm{e}(kx), \quad x \in \mathbb{R}.
\end{align}
By abuse of notation, we shall write $\widetilde{\Theta}_k(x)$ to denote $\widetilde{\Theta}_k(x,\Psi)$ when $\Psi$ is clearly implied.
\begin{lem}\label{Primes in AP 2}
Let \(\eta_1, \eta_2 \geq 0\) be fixed such that 
\begin{align*}
86\eta_1+6\eta_2 < \frac{50}{1093}.
\end{align*}
Let $\kappa \in (0, \widetilde{c}(\eta_1,\eta_2))$ where $\widetilde{c}(\eta_1,\eta_2)$ is as in \eqref{ceta definition A}. Let $\varepsilon>0$ be small enough in terms of $\eta_1, \eta_2$ and $\kappa$. Suppose \(\Psi\) is a fixed nonnegative smooth function compactly supported in \([\frac{1}{2}, \frac{3}{2}]\) with \(\Psi(1) > 0\). Let $f$ be a smooth function with support $\subseteq [\frac{1}{2},3]$. Let $A>0, X, Q, W, R \geqslant 1, b,D \in \mathbb{N}$ satisfying
\begin{align*}
Q^2(\log Q)^{-A} &\ll X \ll Q^{2+\kappa}, \quad X^{\frac{11}{20}} Q^{-1+2\eta_1+\eta_2} \leq R \leq Q^{\frac{2-10\eta_1-4\eta_2}{3}} X^{-\frac{2}{9}}, \notag \\
b &\leq Q^{\eta_1+\eta_2+\varepsilon}, \quad D=\lfloor Q^{\eta_2}\rfloor \quad \textrm{and} \quad Q^{1-2\eta_1-2\eta_2-\varepsilon} \ll W \ll Q.
\end{align*}
Furthermore, assume that $\left\|f^{(j)}\right\|_{\infty},\left\|\Psi^{(j)}\right\|_{\infty} \ll_j 1$. Then for any $\lvert k \rvert \leq K$ and $w_0 \in (\mathbb{Z}/D\mathbb{Z})^{\times}$,
\begin{align}\label{Lemma 6.3 Statement}
\sum_{w \equiv w_0 \bmod D} \widetilde{\Theta}_k\left(\frac{w}{W}\right) \sum_{n \in \mathbb{N}} \Lambda(n) f\left(\frac{n}{X}\right) \mathfrak{u}_R(n, b w) \ll Q^{1-2\eta_1-\eta_2-\varepsilon} \sqrt{X},
\end{align}
where $\widetilde{\Theta}_k(x)$ and $\mathfrak{u}_R(n, w)$ are given by \eqref{Theta tilde new Definition} and \eqref{u definition} respectively. The implied constant in \eqref{Lemma 6.3 Statement} depends at most on $\eta_1, \eta_2,\kappa, \varepsilon$ and the implied constants in the hypotheses.
\end{lem} 
We will prove Lemma \ref{Primes in AP 2} in Section \ref{sec: Dispersion}. In the remaining part of this section, we prove Lemma \ref{Main Theorem Inequality Twisted} assuming Lemma \ref{Primes in AP 2}.
\subsection{Explicit formula and Character Orthogonality} We let $\kappa \in (0, \widetilde{c}(\eta_1,\eta_2))$ where $\widetilde{c}(\eta_1,\eta_2)$ is given by \eqref{ceta definition A}, such that $\textrm{supp } \widehat{\phi} \subset (-2-\kappa, 2+\kappa). $
In what follows, all implied constants are allowed to depend on $\eta_1, \eta_2, \kappa, \Psi$ and $\phi$. To prove Lemma \ref{Main Theorem Inequality Twisted}, it suffices to work with $D = \lfloor Q^{\eta_2} \rfloor$. Following \cite[Section 2.2]{DPR2023}, we rewrite the left-hand side of \eqref{Key Short Interval Estimate} by applying the explicit formula, e.g. \cite[Theorem 2.2]{S1998}. For $q>1$ and $\chi \bmod q$ primitive,
\begin{align*}
\sum_{\substack{\rho \in \mathbb{C} \\
\operatorname{Re}(\rho) \in(0,1) \\
L(\rho, \chi)=0}} &\phi\bigg(\frac{\left(\rho-\frac{1}{2}\right) \log Q}{2 \pi i}\bigg) =\mathcal{O}\left(\frac{1}{\log Q}\right)+\widehat{\phi}(0) \frac{\log q}{\log Q}-\frac{1}{\log Q} \sum_{n \geqslant 1}(\chi(n)+\overline{\chi}(n)) \frac{\Lambda(n)}{\sqrt{n}} \widehat{\phi}\left(\frac{\log n}{\log Q}\right).
\end{align*}
Let $\widetilde{\Phi}_1(x)=\Phi_1(x) x$. Summing
the above relation over $\chi$ and $q$, it is enough to show that
\begin{align*}
S_\phi(Q)&=\sum_{q \in \mathcal{Q}(a,D)} \frac{1}{q} \widetilde{\Phi}_1\left(\frac{q}{Q}\right) \sum_{\substack{\chi \bmod q \\ \text { primitive }}} \frac{1}{\log Q} \sum_{n \geqslant 1}(\chi(n)+\overline{\chi}(n)) \frac{\Lambda(n)}{\sqrt{n}} \widehat{\phi}\left(\frac{\log n}{\log Q}\right)=o(Q^{1-\eta_2}).
\end{align*}
Using character orthogonality (see third display in the proof of \cite[Lemma 4.1]{BM2011}), we write $S_\phi(Q)$ as
\begin{align}\label{Bui-Milinovich Lemma}
S_\phi(Q)=\frac{2}{\log Q} \underset{\substack{v,w \\ vw \in \mathcal{Q}(a,D)}}{\sum \sum }
\widetilde{\Phi}_1 \left(\frac{v w}{Q}\right) \frac{\mu(v)}{v} \frac{\varphi(w)}{w} \sum_{n \equiv 1 \bmod w} \frac{\Lambda(n)}{\sqrt{n}} \widehat{\phi}\left(\frac{\log n}{\log Q}\right) .
\end{align}
Let $V$ be any smooth function supported in $[\frac{1}{2},3]$ generating the partition of unity, that is, 
$
\sum_{j \in \mathbb{Z}} V\left(\frac{x}{2^j}\right)=1,
$
for all $x>0$. Inserting this in \eqref{Bui-Milinovich Lemma}, we obtain
\begin{align*}
S_\phi(Q)=\frac{2}{\log Q} &\sum_{\substack{j \in \mathbb{Z} \\ 1 / 2 \leq X:=2^j \leq 2 Q^{2+\kappa}}}  \underset{\substack{v,w \\ vw \in \mathcal{Q}(a,D)}}{\sum \sum } \widetilde{\Phi}_1\left(\frac{v w}{Q}\right) \frac{\mu(v)}{v} \frac{\varphi(w)}{w}  \notag \\
&\times \sum_{n \equiv 1 \bmod w} \frac{\Lambda(n)}{\sqrt{n}} V\left(\frac{n}{X}\right) \widehat{\phi}\left(\frac{\log n}{\log Q}\right) .
\end{align*}
We set $f_j(x)=x^{-\frac{1}{2}} V(x) \widehat{\phi}\left(\frac{\log (2^j x )}{\log Q}\right),$ for $\frac{1}{2} \leq 2^j \leq 2 Q^{2+\kappa}.$ Differentiating with respect to $x$, for all $\ell \geq 0$, there exists $C_{\phi, \ell} \geqslant 0$ such that $\|f_j^{(\ell)}\|_{\infty} \leq C_{\phi, \ell}$ for all $j$. It follows that
\begin{align}\label{Eq: S(Q) Simplification}
S_\phi(Q) \ll \sup _{1 \ll X \ll Q^{2+\kappa}} X^{-\frac{1}{2}} \sup _f|T(Q, X)|,
\end{align}
where $f$ varies among smooth functions supported in $[\frac{1}{2},3]$ satisfying $\left\|f^{(\ell)}\right\|_{\infty} \leq C_{\phi, \ell}$ and
\begin{align}\label{Eq: T(Q,X) Definition}
T(Q,X):=  \underset{\substack{v,w \\ vw \in \mathcal{Q}(a,D)}}{\sum \sum } \widetilde{\Phi}_1\left(\frac{v w}{Q}\right) \frac{\mu(v)}{v} \frac{\varphi(w)}{w} \sum_{n \equiv 1 \bmod w} \Lambda(n)f\left(\frac{n}{X}\right).
\end{align}
\par
\subsection{Trivial Range} Similar to \cite[Section 2.3]{DPR2023}, we handle the small values of $X$ by the trivial bound
$$
\sum_{n \equiv 1 \bmod w} \Lambda(n) f\left(\frac{n}{X}\right) \ll \log Q \sum_{\substack{X / 2<n<3 X \\ n \neq 1, n \equiv 1 \bmod w}} 1 \ll \frac{X \log Q}{w}.
$$
Substituting the above bound in \eqref{Eq: T(Q,X) Definition} and using the trivial bound  $ \Phi \left(vw/Q\right) \ll Q^{\eta_1}$, we have
\begin{align}\label{Eq: T(Q,X) with Divisor}
T(Q, X) &\ll\frac{X \log Q}{Q} 
\underset{\substack{v,w \\ vw \in \mathcal{Q}(a,D)}}{\sum \sum }  \widetilde{\Phi}_1\left(\frac{v w}{Q}\right)  \frac{\varphi(w)}{w} \notag \\
&\ll \frac{X \log Q}{Q} \sum_{\substack{ q \sim Q \\ q \in \mathcal{Q}(a,D)}} \Phi\left (\frac{q}{Q} \right)\tau(q) +\frac{X \log Q}{Q} \sum_{\substack{ q \sim Q \\ q \in \mathcal{Q}(a,D)}} \bigg \lvert \Phi_2 \left (\frac{q}{Q} \right) \bigg \rvert \tau(q) \notag \\
&\ll \frac{X \cdot Q^{\eta_1} \log Q}{Q} \sum_{\substack{ \lvert q- Q \rvert \leq Q^{1-\eta} \\ q \in \mathcal{Q}(a,D)}} \tau(q) +\frac{X \log Q}{Q} \sum_{\substack{ q \sim Q \\ q \in \mathcal{Q}(a,D)}}\bigg( \sum_{\lvert k \rvert >K}\lvert b(k) \rvert \bigg) \tau(q).
\end{align}
The first sum in \eqref{Eq: T(Q,X) with Divisor} is over the divisor function in short intervals and in arithmetic progressions. Using \cite[Lemma 3.2]{D2017}, we obtain the bound
\begin{align}\label{Divisor Sum: Short Intervals and AP}
\sum_{\substack{ \lvert q-Q \rvert \leq Q^{1-\eta_1} \\ q \in \mathcal{Q}(a,D)}} \tau(q) \ll Q^{1-\eta_1-\eta_2} (\log Q).  
\end{align}
By \eqref{Choice of K}, \eqref{Eq: T(Q,X) with Divisor} and \eqref{Divisor Sum: Short Intervals and AP}, we arrive at
\[ T(Q, X) \ll X Q^{-\eta_2}( \log Q)^2+\frac{X Q^{2\eta_1}\log Q}{QK} \sum_{\substack{ q \sim Q \\ q \in \mathcal{Q}(a,D)}} \tau(q) \ll X Q^{-\eta_2}( \log Q)^2.
\]
This bound is acceptable if $X$ is small, say $X \ll Q^{2}(\log Q)^{-6}$. Therefore, in what follows, we aim to show that for
$
Q^{2}(\log Q)^{-6} \ll X \ll Q^{2+\kappa},
$
we have
$
T(Q, X) \ll_{\varepsilon} Q^{1-\eta_2-\varepsilon} \sqrt{X}.
$
\subsection{Subtracting the main term.} We precisely follow the arguments from \cite[Section 2.4]{DPR2023}. Breaking $\Phi_1$ into $\Phi$ and $\Phi_2$ and using the trivial bound $ \Phi \left(vw/Q\right) \ll Q^{\eta_1},$ we have
\begin{align*}
\underset{\substack{v,w \\ vw \in \mathcal{Q}(a,D)}}{\sum \sum }  \widetilde{\Phi}_1\left(\frac{v w}{Q}\right) \frac{\mu(v)}{v} \frac{\varphi(w)}{w} \sum_{\substack{n \equiv 1 \bmod w \\
(n, v)>1}} \Lambda(n) f\left(\frac{n}{X}\right) \ll_{\varepsilon} Q^{1-\eta_2+\varepsilon}.
\end{align*}
Therefore we may insert the coprimality condition $(n, v)=1$ to \eqref{Eq: T(Q,X) Definition}, arriving at
$$
T(Q, X)= \underset{\substack{v,w \\ vw \in \mathcal{Q}(a,D)}}{\sum \sum } \widetilde{\Phi}_1\left(\frac{v w}{Q}\right) \frac{\mu(v)}{v} \frac{\varphi(w)}{w} \sum_{\substack{n \equiv 1 \bmod w \\(n, v)=1}} \Lambda(n) f\left(\frac{n}{X}\right)+\mathcal{O}_{\varepsilon}\left(Q^{1-\eta_2+\varepsilon}\right).
$$
Let $1 \leq R<Q / 2$ so that $R<v w$ for any $v, w$ appearing in the sum. We replace the condition $n \equiv 1 \bmod w$ by $\mathfrak{u}_R(n, w)$. The difference is
\begin{align*}
 \underset{\substack{v,w \\ vw \in \mathcal{Q}(a,D)}}{\sum \sum }& \widetilde{\Phi}_1\left(\frac{v w}{Q}\right) \frac{\mu(v)}{v} \frac{1}{w} \sum_{\substack{(n, v)=1}} \Lambda(n) f\left(\frac{n}{X}\right) \sum_{\substack{\chi \bmod w \\ r=\operatorname{cond}(\chi) \leq R}} \chi(n) \\
&=\sum_{q \in \mathcal{Q}(a,D)} \frac{1}{q} \widetilde{\Phi}_1\left(\frac{q}{Q}\right) \sum_{\substack{\chi \bmod q \\ r=\operatorname{cond}(\chi) \leq R,\, r \mid q}} \sum_{\substack{(n, q)=1}} \Lambda(n) f\left(\frac{n}{X}\right) \chi(n) \sum_{v \mid q / r} \mu(v)=0,
\end{align*}
since $r<q$ by our choice of $R$. It follows that
$$
T(Q, X)= \underset{\substack{v,w \\ vw \in \mathcal{Q}(a,D)}}{\sum \sum } \widetilde{\Phi}_1\left(\frac{v w}{Q}\right) \frac{\mu(v)}{v} \frac{\varphi(w)}{w} \sum_{\substack{(n, v)=1}} \Lambda(n) f\left(\frac{n}{X}\right)\mathfrak{u}_R(n, w)+\mathcal{O}_{\varepsilon}\left(Q^{1-\eta_2+\varepsilon}\right).
$$
We may now remove the coprimality condition on $n$ using the trivial bound \eqref{Trivial Bound}. Since $R \ll Q$, it's easy to check that the arising error terms are acceptable. We get
$$
T(Q, X)=T(Q, X, R)+\mathcal{O}_{\varepsilon}\left(Q^{1-\eta_2+\varepsilon}\right),
$$
where
$$
\begin{aligned}
T(Q, X, R) & := \underset{\substack{v,w \\ vw \in \mathcal{Q}(a,D)}}{\sum \sum } \widetilde{\Phi}_1\left(\frac{v w}{Q}\right) \frac{\mu(v)}{v} \frac{\varphi(w)}{w} \Delta(w) \quad \textrm{and} \quad \Delta(w) :=\sum_n \Lambda(n) f\left(\frac{n}{X}\right) \mathfrak{u}_R(n, w) .
\end{aligned}
$$
\subsection{Critical Range} We now impose the additional conditions
\begin{align}\label{Hypothesis for R}
Q^{\frac{\kappa}{2} +\eta_1+\eta_2+\varepsilon} \leq R \leq Q^{\frac{1}{2}}, \quad \textrm{and} \quad \kappa<\frac{8}{3}\left(1-\eta_1-\eta_2 \right)-2 .
\end{align}
Let $B \in [1, Q^{\frac{1}{2}}]$ be a parameter. In $T(Q, X, R)$, we write $\frac{\varphi(w)}{w}=\sum_{b \mid w} \frac{\mu(b)}{b}$ to obtain
$$
\begin{aligned}
T(Q, X, R) & \leq \sum_{ \substack{b, v \\ (b,D)= (v,D)=1}} \frac{1}{b v}\bigg| \sum_{w \equiv a \overline{bv} \bmod D}  \widetilde{\Phi}_1\left(\frac{b v w}{Q}\right) \Delta(b w)\bigg| \\
& \ll(\log B)^2 \sup _{b, v \leq B}\bigg|\sum_{w \equiv a \overline{bv} \bmod D}  \widetilde{\Phi}_1 \left(\frac{b v w}{Q}\right) \Delta(b w)\bigg|+E_1+E_2,
\end{aligned}
$$
where $E_1$ (resp. $E_2$ ) corresponds to the sum over $b, v$ restricted to $b>B$ (resp. $\left.v>B\right)$. We have
\begin{align*}
E_1 & \ll Q^{\eta_1}\sum_{\substack{b,v, w \\
bvw \leq 3 Q, \, b>B}} \frac{1}{bv}|\Delta(b w)| \ll_{\varepsilon} B^{-1} Q^{\eta_1+\varepsilon} \sum_{q \leq 3 Q}|\Delta(q)| \ll_{\varepsilon} Q^{1+\eta_1+\varepsilon} \sqrt{X} B^{-1},
\end{align*}
using Lemma \ref{Primes in AP} and our hypothesis \eqref{Hypothesis for R}. On the other hand, we obtain
\begin{align*}
E_2 &\ll_{\varepsilon} Q^{\eta_1+\varepsilon} \sum_{\substack{b, w \\
b w \leq 3Q/B}} \frac{|\Delta(b w)|}{b}  \ll_{\varepsilon} Q^{\eta_1+\varepsilon} \sum_{q \leq 3Q/B}|\Delta(q)|  \ll_{\varepsilon} Q^{1+\varepsilon} \sqrt{X}\left(\frac{Q^{\eta_1}}{B}+\frac{Q^{\eta_1}\sqrt{X}}{RQ}+\frac{Q^{\eta_1} X^{\frac{3}{8}}}{Q}  \right ),
\end{align*}
again using Lemma \ref{Primes in AP}. By our conditions \eqref{Hypothesis for R}, if we let $B=Q^{\eta_1+\eta_2+\varepsilon}$, then both $E_1$ and $E_2$ are $\ll_{\varepsilon} Q^{1-\eta_2-\varepsilon}\sqrt{X}$. Therefore it will suffice to show that
$$
\sum_{w \equiv w_0 \bmod D}  \widetilde{\Phi}_1 \left(\frac{b v w}{Q}\right) \Delta(b w) \ll_{\varepsilon} Q^{1-\eta_2-\varepsilon} \sqrt{X}
$$
uniformly for $b, v \leq Q^{\eta_1+\eta_2+\varepsilon}$ and $w_0 \in (\mathbb{Z}/D\mathbb{Z})^{*}$. We further write 
\begin{align*}
\sum_{w \equiv w_0 \bmod D}  \widetilde{\Phi}_1 \left(\frac{b v w}{Q}\right) \Delta(b w)
&=\sum_{\lvert k \rvert \leq K} b(k) \sum_{w \equiv w_0 \bmod D}  \widetilde{\Theta}_k\left(\frac{bvw}{Q}\right) \Delta(b w),
\end{align*}
where $\widetilde{\Theta}_k(x)$ is given by \eqref{Theta tilde new Definition}. To prove Lemma \ref{Main Theorem Inequality Twisted}, it is enough to show that for each $\lvert k \rvert \leq K$, and
$
Q^{2}(\log Q)^{-6} \ll X \ll Q^{2+\kappa},
$
\[
\sum_{w \equiv w_0 \bmod D}  \widetilde{\Theta}_k\left(\frac{bvw}{Q}\right) \Delta(b w) \ll_{\varepsilon}  Q^{1-2\eta_1-\eta_2-\varepsilon} \sqrt{X}, 
\]
where the implied constant depends at most on $\eta_1,\eta_2, \kappa, \Psi, \phi$ and $\varepsilon$.
\par
At this stage, we invoke Lemma \ref{Primes in AP 2}. It's easy to verify that under the conditions \eqref{Hypothesis for R}, we can choose $R$ which also satisfies Lemma \ref{Primes in AP 2}. Choosing $W=Q(bv)^{-1}$ in Lemma \ref{Primes in AP 2}, we arrive at our desired conclusion. This completes the proof of Lemma \ref{Main Theorem Inequality Twisted}.
\section{Proof of Lemma \ref{Primes in AP 2} : Kloosterman Sum Bounds and the Dispersion Method}\label{sec: Dispersion}
Our main goal in this section is to prove Lemma \ref{Primes in AP 2}. We first collect two results from \cite{DPR2023} regarding the combinatorial decomposition of the von Mangoldt function.

\begin{lem}\label{Combinatorial Decomposition 1}
Let $\left\{t_j\right\}_{1 \leq j \leq J} \in \mathbb{R}$ be nonnegative real numbers such that $\sum_j t_j=1$. Let $\lambda, \sigma, \delta \geq 0$ be real numbers such that
\begin{align*}
\delta<\frac{1}{12}, \quad \sigma \leq \frac{1}{6}-\frac{1}{2} \delta \quad  \textrm{and} \quad 2 \lambda+\sigma<\frac{1}{3}.
\end{align*}
Then at least one of the following must occur:
\begin{itemize}
    \item Type $d_1 :$ There exists $t_j$ with $t_j \geq \frac{1}{3}+\lambda$.
    \item Type $d_2 :$ There exist $i, j, k$ such that $\frac{1}{3}-\delta<t_i, t_j, t_k<\frac{1}{3}+\lambda$, and
    $$
\sum_{t_j^* \notin\left\{t_i, t_j, t_k\right\}} t_j^*<\sigma .$$
    \item Type $\mathrm{II} :$ There exists $S \subset\{1, \ldots, J\}$ such that
$$
\sigma \leq \sum_{j \in S} t_j \leq \frac{1}{3}-\delta .
$$ 
\end{itemize}
\end{lem}
\begin{proof}
See \cite[Lemma 9]{DPR2023}.
\end{proof}
\begin{coro}\label{Cor: Decomposition}
Let $f$ be a smooth function supported in $[\frac{1}{2},3]$, $u: \mathbb{N} \rightarrow \mathbb{C}$ be any map, and $X \geqslant 1$. Then there exists a sequence $\left(C_j\right)_{j \geqslant 0}$ of positive numbers, depending only on $f$, such that
$$
\bigg|\sum_{n \in \mathbb{N}} \Lambda(n) f\left(\frac{n}{X}\right) u(n)\bigg| \ll(\log X)^8\left(T_1+T_2+T_{\mathrm{II}}\right),
$$
where
\begin{align}
T_1&=\sup _{\substack{N \gg X^{1 / 3+\lambda} \\ M N \asymp X}} \sup _{\substack{g \in \mathcal{G} \\ \beta \in \mathcal{S}}}\bigg|\sum_{\substack{n \in \mathbb{N} \\ m \sim M}} g\left(\frac{n}{N}\right) \beta_m u(m n)\bigg|, \label{T1 Def}\\
T_2&=\sup _{\substack{X^{1 / 3-\delta} \ll N_2 \leq N_1 \ll X^{1 / 3+\lambda} \\ M N_1 N_2 \asymp X}} \sup _{\substack{g_1, g_2 \in \mathcal{G} \\ \beta \in \mathcal{S}}}\bigg|\sum_{\substack{n_1, n_2 \in \mathbb{N} \\ m \sim M}} g_1\left(\frac{n_1}{N_1}\right) g_2\left(\frac{n_2}{N_2}\right) \beta_m u\left(m n_1 n_2\right)\bigg|, \label{T2 Def}\\
T_{\mathrm{II}}&=\sup _{\substack{X^\sigma \ll N \ll X^{1 / 3-\delta} \\
M N \asymp X}} \sup _{\alpha, \beta \in \mathcal{S}}\bigg| \underset{\substack{n \sim N \\ m \sim M}}{\sum \sum} \sum_m \alpha_m \beta_n u(m n)\bigg| \label{T3 Def},
\end{align}
the implied constants are absolute, $\mathcal{G}$ is the set of smooth functions $g$ supported in $[\frac{1}{2},3]$ satisfying $\left\|g^{(j)}\right\|_{\infty} \leq C_j$, and $\mathcal{S}$ is the set of sequences $\left(\beta_n\right)$ satisfying $\left|\beta_n\right| \leq \tau(n)^8$.
\end{coro}
\begin{proof}
    See \cite[Corollary 10]{DPR2023}.
\end{proof}
We write $X=Q^{2+\omega}$ so that 
\[
-o(1) \leq \omega \leq \kappa+o(1), \quad \textrm{as} \quad Q \to \infty.\]
We now carry out the analysis for $T_1, T_2$ and $T_{\mathrm{II}}$, where we choose
$$
u(n):=\underset{w}{\sum\nolimits^*} \widetilde{\Theta}_k\left(\frac{w}{W}\right) \mathfrak{u}_R(n, b w).
$$
Here the notation $\sum_w\nolimits^*$ indicates that the sum is over $w \equiv w_0 \bmod D$. We also write $R=X^\rho.$
\subsection{Type $d_1$ sums.} We suppose $M$ and $N$ are given as in \eqref{T1 Def}. We want to bound
\begin{align*}
T_1(M, N)=\underset{w}{\sum\nolimits^*}  \widetilde{\Theta}_k\left(\frac{w}{W}\right) \sum_{\substack{m \sim M \\(m, b w)=1}} \beta_m\bigg (&\sum_{\substack{n \in \mathbb{N} \\ m n \equiv 1 \bmod b w}} g\left(\frac{n}{N}\right) \\
&\quad -\frac{1}{\varphi(b w)} \sum_{\substack{\chi \bmod b w \\ \operatorname{cond}(\chi) \leq R}} \chi(m) \sum_{\substack{(n, b w)=1}} \chi(n) g\left(\frac{n}{N}\right)\bigg ) .
\end{align*}
By Poisson summation and bounds on Gauss sums \cite[Lemma 3.2]{IK2004}, we have for any fixed $A>0$,
\begin{align*}
\sum_{n=\overline{m} \bmod b w} g\left(\frac{n}{N}\right)&=\frac{N}{b w} \widehat{g}(0)+\frac{N}{b w} \sum_{0<|h| \leq H} \widehat{g}\left(\frac{N h}{b w}\right) \mathrm{e}\left(\frac{\overline{m} h}{b w}\right)+\mathcal{O}_{\varepsilon,A}\left(Q^{-A}\right), \\
\textrm{and} \quad \frac{1}{\varphi(b w)} \sum_{(c, b w)=1} \chi(c) g\left(\frac{c}{N}\right)&=\frac{N}{b w} \widehat{g}(0) \mathbbm{1}\left(\chi=\chi_0\right)+\mathcal{O}_{\varepsilon}\bigg(\frac{Q^{\varepsilon} R^{\frac{1}{2}}}{bW}\bigg),
\end{align*}
where $H = bW^{1+\varepsilon}N^{-1}$. Therefore we obtain
$$
T_1(M, N)=\frac{N}{b} \underset{w}{\sum\nolimits^*} \frac{1}{w} \widetilde{\Theta}_k\left(\frac{w}{W}\right) \sum_{\substack{(m, b w)=1 \\ m \sim M}} \beta_m \sum_{0<|h| \leq H} \widehat{g}\left(\frac{N h}{b w}\right) \mathrm{e}\left(\frac{\overline{m} h}{b w}\right)+\mathcal{O}_{\varepsilon}(M R^{\frac{3}{2}} Q^{-\eta_2+\varepsilon}) .
$$
By reciprocity, we get
$$
\frac{\overline{m} h}{b w} \equiv-\frac{\overline{b w} h}{m}+\frac{h}{m b w} \bmod 1,
$$
which implies that
\begin{align*}
T_1(M, N)&=\frac{N}{b}  \underset{w}{\sum\nolimits^*} \frac{1}{w} \widetilde{\Theta}_k\left(\frac{w}{W}\right) \sum_{\substack{(m, b w)=1 \\ m \sim M}} \beta_m \sum_{0<|h| \leq H} \widehat{g}\left(\frac{N h}{b w}\right) \mathrm{e}\left(-\frac{\overline{b w} h}{m}\right) \notag \\
&\quad +\mathcal{O}_{\varepsilon}(M R^{\frac{3}{2}} Q^{-\eta_2+\varepsilon}+Q^{1-\eta_2+\varepsilon} N^{-1}).
\end{align*}
We detect the condition $w \equiv w_0 \bmod D$ using additive characters to arrive at
\begin{align}\label{Detecting Additive Characters}
\frac{N}{bD W} \sum_{\ell=0}^{D-1} e \left ( - \frac{\ell w_0}{D} \right) \sum_{\substack{(m, b)=1 \\ m \sim M}} \beta_m \sum_{0<|h| \leq H} \sum_{(w, m)=1} \widehat{g}\left(\frac{N h}{b w}\right) \frac{W}{w} \widetilde{\Theta}_k\left(\frac{w}{W}\right) \mathrm{e}\left(\frac{\overline{b w} h}{m}+ \frac{\ell w}{D}\right) .
\end{align}
The sum 
\begin{align}\label{Kloosterman Sum}
\sum_{\substack{(w,m)=1, \, w \asymp W}} \mathrm{e}\left(\frac{\overline{b w} h}{m}+ \frac{\ell w}{D}\right)
\end{align}
is a Kloosterman sum with modulus $[m,D]$. Using standard techniques for completing Kloosterman sums, the sum in \eqref{Kloosterman Sum} is bounded by 
\[
\ll \frac{W}{[m,D]} \cdot \tau([m,D]) D^{\frac{3}{2}}(h,m)^{\frac{1}{2}} [m,D]^{\frac{1}{2}}+ \tau([m,D]) D^{\frac{3}{2}}(h,m)^{\frac{1}{2}} [m,D]^{\frac{1}{2}} (\log [m,D]).
\]
Here we used $(\overline{b}h[m,D]/m,[m,D]) \leq D^3 (h,m)$. By partial summation, the sum over $w$ in \eqref{Detecting Additive Characters} is
$$
\ll_{\varepsilon} Q^{2\eta_1+\varepsilon} D^2 M^{\frac{1}{2}} (h,m)^{\frac{1}{2}} \left( W M^{-1}+1\right)  .
$$
Summing over $h, m$ and $\ell$, we obtain the bound
$$
T_1(M, N) \ll_{\varepsilon} M^{\frac{1}{2}} Q^{1+2\eta_1+2\eta_2+\varepsilon}+M^{\frac{3}{2}} Q^{2\eta_1+2\eta_2+\varepsilon}+M R^{\frac{3}{2}} Q^{-\eta_2+\varepsilon}+Q^{1-\eta_2+\varepsilon} N^{-1} .
$$
This bound is acceptably small provided
$$
N \gg\left(\frac{X}{Q}\right)^{\frac{2}{3}}Q^{\frac{4\eta_1}{3}+2\eta_2+\varepsilon}=X^{\frac{1}{3}+\frac{1}{3} \frac{\omega+4\eta_1+6\eta_2}{(2+\omega)}+\frac{\varepsilon}{2+\omega}} \quad \text { and } \quad N \gg \frac{X^{\frac{1}{2}} R^{\frac{3}{2}} Q^{\eta_1+\varepsilon}}{Q}=X^{\frac{1}{2} \frac{\omega+2\eta_1}{2+\omega}+\frac{3}{2} \rho+\frac{\varepsilon}{2+\omega}} \text {. }
$$
These inequalities are satisfied for all sufficiently small $\varepsilon>0$, under the assumptions
\begin{align}\label{Eq: Conditions on Lambda and Eta}
\lambda>\frac{\omega+8\eta_1+6\eta_2}{3(2+\omega)} \quad  \text { and } \quad \rho<\frac{4+\omega+4\eta_1+12\eta_2}{9(2+\omega)} .
\end{align}
Hence we conclude the following.
\begin{lem}\label{Dispersion I}
Under the notation and hypotheses of Corollary \ref{Cor: Decomposition}, and assuming \eqref{Eq: Conditions on Lambda and Eta}, we have
$$
T_1 \ll_{\varepsilon} Q^{1-2\eta_1-\eta_2-\varepsilon} \sqrt{X} .
$$
The implied constant may also depend at most on $\lambda, \rho$ and $\omega$.
\end{lem}
\subsection{Type $d_2$ sums.} For brevity, we rename $(N_1, N_2, M)$ into $(M, N, L)$ so that $M N L \asymp X$. We have
$$
\begin{aligned}
T_2(M, N, L)=\sum_{\ell \sim L} \beta_{\ell} \underset{(w, \ell)=1}{\sum\nolimits^*} \widetilde{\Theta}_k\left(\frac{w}{W}\right)   
 & \bigg(\underset{\substack{m, n \\
\ell m n \equiv 1 \bmod b w}}{\sum \sum} g_1\left(\frac{m}{M}\right) g_2\left(\frac{n}{N}\right)\\
& -\frac{1}{\varphi(b w)} \sum_{\substack{\chi \bmod b w \\
\operatorname{cond}(\chi) \leq R}} \chi(\ell) \underset{\substack{m,n \\ (m n, b w)=1}}{\sum \sum} g_1\left(\frac{m}{M}\right) g_2\left(\frac{n}{N}\right) \chi(m n)\bigg) .
\end{aligned}
$$
By Poisson summation on the $m$-sums, we get for any fixed $A>0$,
$$
\begin{aligned}
\sum_{m \equiv \overline{\ell n} \bmod b w} g_1\left(\frac{m}{M}\right) & =\frac{M}{b w} \sum_{|h| \leq H} \widehat{g}_1\left(\frac{M h}{b w}\right) \mathrm{e} \bigg(\frac{\overline{\ell n} h}{b w}\bigg)+\mathcal{O}_{\varepsilon,A}\left(Q^{-A}\right) \\
\textrm{and} \quad \frac{1}{\varphi(b w)}\sum_{(m, b w)=1} \chi(m) g_1\left(\frac{m}{M}\right) & =\frac{M}{b w} \widehat{g}_1(0) \mathbbm{1}\left(\chi=\chi_0\right)+\mathcal{O}_{\varepsilon}\bigg( \frac{Q^{\varepsilon} R^{\frac{1}{2}}}{bW}\bigg),
\end{aligned}
$$
where $H=bW^{1+\varepsilon} M^{-1}$. The contribution of the error terms is
$
\ll_{\varepsilon} L N R^{\frac{3}{2}} Q^{-\eta_2+\varepsilon}.
$
The zero frequency from the Poisson summation cancels out. Next, we separate the variables $h$ and $w$ by writing
$$
\widehat{g}_1\left(\frac{M h}{b w}\right)=\frac{w}{M} \int_{\mathbb{R}} g_1\left(\frac{w y}{M}\right) e\left(-\frac{h y}{b}\right) \dd y.
$$
The integral is restricted to $y \asymp M / W$. We interchange integral and summation, and break the sum over $n$ into residue classes $n_0 \bmod D$ to see that $T_2(M, N, L)$ is 
\begin{align*}
& \ll \frac{MD}{b W} \sup_{\substack{y \asymp M / W \\ n_0 \in (\mathbb{Z}/D\mathbb{Z})^{*} }}\bigg|\sum_{\ell} \beta_{\ell} \sum_{0<|h| \leq H} \mathrm{e}\left(-\frac{h y}{b}\right) \underset{\substack{w \\ (bw, \ell)=1}}{\sum\nolimits^*}  \underset{\substack{n \\ (n,bw)=1 }}{\sum\nolimits^*} \widetilde{\Theta}_k\left(\frac{w}{W}\right) g_1\left(\frac{w y}{M}\right) g_2\left(\frac{n}{N}\right) \mathrm{e}\left(-h\frac{\overline{\ell n}}{bw}\right)\bigg| \notag \\
&\quad \hspace{0.5cm}+\mathcal{O}_{\varepsilon}(L N R^{\frac{3}{2}} Q^{-\eta_2+\varepsilon}) .
\end{align*}
Here the notation $\sum_n\nolimits^*$ indicates that the sum is over $n \equiv n_0 \bmod D$. Now we use Lemma \ref{New Drappeau's Theorem} with the following dictionary:
\begin{align*}
&c=w, \quad d=n, \quad n =h, \quad, r = \ell, \quad s=b',\\
&\boldsymbol{C} \sim W,\quad \boldsymbol{D} \sim N, \quad \boldsymbol{N} \sim H, \quad \boldsymbol{R} \sim L, \quad \boldsymbol{S} \sim b,\\
&g(c,d,n,r,s) := g(w,n) = \widetilde{\Theta}_k\left(\frac{w}{W}\right) g_1\left(\frac{w y}{M}\right) g_2\left(\frac{n}{N}\right),\\
&b_{n,r,s} = \mathbbm{1}_{b^{\prime}=b} \cdot {\mathrm{e}}(-h y / b) \beta_{\ell} , \\
&\varepsilon_1 = 2\eta_1+\varepsilon, \quad \varepsilon_2 = \varepsilon, \quad \varepsilon_3,\varepsilon_4,\varepsilon_5 = 0.
\end{align*}
Before estimating the final bound, we note that $H \ll L$ and $N \ll_{\varepsilon} W^{1+\varepsilon}$, if we let
\[
\lambda < \frac{1}{6}-\frac{\omega+4\eta_1+4\eta_2}{2(2+\omega)}.
\]
Using Lemma \ref{New Drappeau's Theorem} , we find the bounds
\begin{align*}
K(\boldsymbol{C}, \boldsymbol{D}, \boldsymbol{N}, \boldsymbol{R}, \boldsymbol{S}) &\ll \left (  D (bW(bL+H)(W+LN)+W^2Nb \sqrt{(bL+H)L} +N^2LH\right)^{\frac{1}{2}} \notag \\
& \ll bD^{\frac{1}{2}}  \left (WL^{\frac{1}{2}} +(WN)^{\frac{1}{2}}L+W(LN)^{\frac{1}{2}}+b^{-1}N(LH)^{\frac{1}{2}} \right)
\end{align*}
and $\left\|b_{\boldsymbol{N}, \boldsymbol{R}, \boldsymbol{S}}\right\|_2 \ll_{\varepsilon} L^{\varepsilon}(H L)^{\frac{1}{2}} $. It follows that
\begin{align*}
T_2(M, N, L) &\ll_{\varepsilon} L N R^{\frac{3}{2}} Q^{-\eta_2+\varepsilon}+ \bigg \{ Q^{\varepsilon} \frac{MD}{b W} \cdot \left ( WNHLb \right)^{24 \eta_1} \cdot D^{\frac{3}{2}} \cdot (LH)^{\frac{1}{2}} \cdot bD^{\frac{1}{2}} \notag\\
&\quad \times \left (WL^{\frac{1}{2}} +(WN)^{\frac{1}{2}}L+W(LN)^{\frac{1}{2}}+b^{-1}N(LH)^{\frac{1}{2}} \right) \bigg \} \notag \\
&\ll_{\varepsilon} L N R^{\frac{3}{2}} Q^{-\eta_2+\varepsilon}+Q^{\varepsilon}\left ( WNHLb \right)^{24 \eta_1} D^3 b^{\frac{1}{2}} X^{\frac{1}{2}}  (L+(WL)^{\frac{1}{2}}).
\end{align*}
Assuming $\delta<\frac{1}{12}$, we have $WNHLb \ll Q^4$, which implies that the contribution from the above error terms is acceptable provided
$$
\begin{gathered}
M \gg X^{\frac{\omega+4\eta_1}{2(2+\omega)}+\frac{3}{2} \rho+\varepsilon} \quad \textrm{and} \quad M N \gg Q^{197\eta_1+9\eta_2} X^{\frac{1}{2}+\frac{\omega}{2(2+\omega)}+\varepsilon} = X^{\frac{1}{2}+\frac{\omega+2(197\eta_1+9\eta_2)}{2(2+\omega)}+\varepsilon}.
\end{gathered}
$$
These inequalities are satisfied for all sufficiently small $\varepsilon>0$, under the assumptions
\begin{align} \label{Eq: Conditions on Lambda and Eta II}
\delta<\frac{1}{12}-\frac{\omega+197\eta_1+9\eta_2}{2(2+\omega)}, \quad \lambda<\frac{1}{6}-\frac{\omega+4\eta_1+4\eta_2}{2(2+\omega)}, \quad \textrm{and} \quad \rho<\frac{1}{6}.
\end{align}
We therefore conclude the following.
\begin{lem}\label{Dispersion II}
Under the notation and hypotheses of Corollary \ref{Cor: Decomposition}, and assuming \eqref{Eq: Conditions on Lambda and Eta II}, we have
$$
T_2 \ll_{\varepsilon} Q^{1-2\eta_1-\eta_2-\varepsilon} \sqrt{X}.
$$
The implied constant may also depend at most on $\lambda, \delta, \rho$ and $\omega$.
\end{lem}
\subsection{Type II sums.} For the type II case \eqref{T3 Def}, we use the dispersion method of Linnik \cite{Linnik}, closely following the frameworks developed by Fouvry \cite{Fouvry}, Bombieri--Friendlander--Iwaniec \cite{BFI1986}, Drappeau \cite{D2017}, Fouvry--Radziwiłł \cite{FouvryRadziwill2018} and Drappeau–Pratt–Radziwiłł \cite{DPR2023}. We want to establish the bound
$$
T_{\mathrm{II}}(M, N):=\underset{w}{\sum\nolimits^*} \widetilde{\Theta}_k\left(\frac{w}{W}\right) \sum_{m} \sum_n \alpha_m \beta_n \mathfrak{u}_R(m n, b w) \ll_{\varepsilon}  Q^{1-2\eta_1-\eta_2-\varepsilon} \sqrt{X},
$$
where $\alpha_m$ is supported around $m \sim M, \beta_n$ is supported around $n \sim N$, $M N \asymp X$, and $X^\sigma \ll N \ll X^{\frac{1}{3}-\delta}$. We have the bounds $\left|\alpha_m\right| \leq \tau(m)^{\mathcal{O}(1)}$, and similarly for $\beta$. 
\par
Following \cite[Section 4.4]{DPR2023}, we interchange summation and apply the triangle inequality to get
$$
\bigg|T_{\mathrm{II}}(M, N)\bigg| \leq \sum_m\bigg|\sum_w \sum_n\bigg| .
$$
Applying the Cauchy--Schwarz inequality, we arrive at
$$
T_{\mathrm{II}}(M, N)^2 \ll M(\log M)^{\mathcal{O}(1)} \mathcal{D},
$$
where
$$
\mathcal{D}=\sum_m f\left(\frac{m}{M}\right)\bigg| \underset{w}{\sum\nolimits^*} \widetilde{\Theta}_k\left(\frac{w}{W}\right) \sum_{mn \equiv 1 \bmod bw} \beta_n-\frac{1}{\varphi(b w)} \sum_{\substack{\chi \bmod b w \\ \operatorname{cond}(\chi) \leq R}} \underset{w}{\sum\nolimits^*} \sum_{\substack{n \\(m n, b w)=1}} \widetilde{\Theta}_k\left(\frac{w}{W}\right) \beta_n \chi(m n)\bigg|^2 .
$$
Here $f$ is some fixed, nonnegative smooth function supported in $[\frac{1}{2},3]$ and majorizing $\mathbbm{1}_{[1,2]}$. It suffices to show that
$$
\mathcal{D} \ll_{\varepsilon} N Q^{2-4\eta_1-2\eta_2-\varepsilon} .
$$
We open the square and arrive at

$$
\mathcal{D}=\mathcal{D}_1-2 \operatorname{Re} \mathcal{D}_2+\mathcal{D}_3,
$$
say. We will now study each sum $\mathcal{D}_i$.
\subsubsection{Evaluation of $\mathcal{D}_3$.} By definition we have
$$
\mathcal{D}_3:=\sum_m f\left(\frac{m}{M}\right) \underset{\substack{w_1, w_2, n_1, n_2 \\\left(m n_1, b w_1\right)=1 \\\left(m n_2, b w_2\right)=1}} {\sum\nolimits^*} \sum_{\substack{\chi_1, \chi_2 \\ \chi_j \bmod b w_j \\ \operatorname{cond}\left(\chi_j\right) \leq R}} \widetilde{\Theta}_k\left(\frac{w_1}{W}\right) \widetilde{\Theta}_k \left(\frac{w_2}{W}\right) \beta_{n_1} \overline{\beta}_{n_2} \frac{\chi_1\left(m n_1\right) \overline{\chi_2}\left(m n_2\right)}{\varphi\left(b w_1\right) \varphi\left(b w_2\right)}.
$$
We follow the computations in \cite[pp. 711-712]{D2017} with the choice
\begin{align}\label{Gamma Sequence}
\gamma(q)=\mathbbm{1}(b \mid q) \widetilde{\Theta}_k(q /(b W)) \mathbbm{1}(qb^{-1} \equiv w_0 \bmod D) 
\end{align}
and the modification that $\operatorname{cond}\left(\chi_1 \overline{\chi_2}\right) \leq R^2$. Writing $H=Q^{\varepsilon} b\left[w_1, w_2\right] M^{-1}$, we get
$$
\begin{aligned}
\mathcal{D}_3 & =\mathcal{M}_3+\mathcal{O}_{\varepsilon}\bigg(Q^{\varepsilon} \underset{\substack{w_1, w_2 \asymp W \\
n_1, n_2 \asymp N}}{\sum\nolimits^*} \frac{1}{\varphi\left(b w_1\right) \varphi\left(b w_2\right)} \sum_{\substack{\chi_1, \chi_2 \\
\text { cond }\left(\chi_j\right) \leq R}} \frac{M}{b\left[w_1, w_2\right]} \sum_{0<|h| \leq H} R \sum_{d \mid\left(h, b\left[w_1, w_2\right]\right)} d\bigg) \\
& =\mathcal{M}_3+\mathcal{O}_{\varepsilon}\left(Q^{\varepsilon} N^2 R^5 Q^{-2\eta_2}\right),
\end{aligned}
$$
where the main term provided by \cite[p. 712]{D2017} is 
$$
\mathcal{M}_3:=M \widehat{f}(0) \underset{\substack{w_1, w_2, n_1, n_2 \\\left(m n_1, b w_1\right)=1 \\\left(m n_2, b w_2\right)=1}} {\sum\nolimits^*} \sum_{\substack{\chi \text { primitive } \\ \text { cond }(\chi) \leq R \\ \operatorname{cond}(\chi) \mid b\left(w_1, w_2\right)}} \widetilde{\Theta}_k\left(\frac{w_1}{W}\right) \widetilde{\Theta}_k\left(\frac{w_2}{W}\right) \beta_{n_1} \overline{\beta}_{n_2} \chi\left(n_1 \overline{n_2}\right) \frac{\varphi\left(b w_1 w_2\right)}{b w_1 w_2 \varphi\left(b w_1\right) \varphi\left(b w_2\right)}.
$$
The error term is acceptable provided
$N R^5 \ll_{\varepsilon} Q^{2-4\eta_1-\varepsilon}$. Since $N \ll X^{\frac{1}{3}}$, this is acceptable provided
\begin{align}\label{Rho condition I}
\rho<\frac{4-\omega-12\eta_1}{15(2+\omega)}.
\end{align}
\subsubsection{Evaluation of $\mathcal{D}_2$.} We have

$$
\mathcal{D}_2:=\underset{\substack{w_1, w_2, n_1, n_2 \\\left(m n_1, b w_1\right)=1 \\\left(m n_2, b w_2\right)=1}} {\sum\nolimits^*}  \sum_{\substack{\chi \bmod b w_2 \\ \operatorname{cond}(\chi) \leq R}} \widetilde{\Theta}_k \left(\frac{w_1}{W}\right) \widetilde{\Theta}_k\left(\frac{w_2}{W}\right) \overline{\beta_{n_1}} \beta_{n_2} \frac{\chi\left(n_2\right)}{\varphi\left(b w_2\right)} \sum_{\substack{m n_1 \equiv 1\left(b w_1\right) \\\left(m, w_2\right)=1}} \chi(m) f\left(\frac{m}{M}\right) .
$$
Again, we follow the computations in \cite[pp. 712-713]{D2017} with the choice
\eqref{Gamma Sequence} to obtain
\begin{align}\label{Dispersion Term 2}
\mathcal{D}_2=\mathcal{M}_3+\mathcal{O}_{\varepsilon}\bigg(Q^{\varepsilon} R^{\frac{3}{2}} N^2(bW)^{-1} \underset{\substack{w_1 \asymp W, \,  w_2 \asymp W}} {\sum\nolimits^* \sum\nolimits^*} (bw_2,bw_1^{\infty})\bigg).
\end{align}
To address the sum over $w_1$ and $w_2$, we write
\begin{align}\label{Bw sums}
\underset{\substack{w_1 \asymp W , \, w_2 \asymp W}} {\sum\nolimits^* \sum\nolimits^*} (bw_2,bw_1^{\infty}) &= \sum_{d \leq 3 bW} d \underset{\substack{W/2 \leq w_1, w_2 \leq 3W \\ (bw_2,bw_1^{\infty})=d }} {\sum\nolimits^* \sum\nolimits^*} 1 = \sum_{\substack{d \leq 3 bW \\ 5bW < 2[\operatorname{ker}(d),D]}} +\sum_{\substack{d \leq 3 bW \\ 5bW \geq 2[\operatorname{ker}(d),D]}},
\end{align}
For the second sum, using the change of variables $w_1'=bw_1/\operatorname{ker}(d)$ and $w_2' = bw_2/d$, we have
\begin{align}\label{AP Condition}
\sum_{\substack{d \leq 3 bW \\ 5bW \geq 2[\operatorname{ker}(d),D]}} &\ll  \sum_{\substack{d \leq 3 bW \\ 5bW \geq 2[\operatorname{ker}(d),D]}} d \sum_{\substack{ w_1' \asymp bW/\operatorname{ker}(d) \\ w_1' \equiv b w_0 \operatorname{ker}(d)^{-1} \bmod D/(\operatorname{ker}(d),D) }} \sum_{\substack{ w_2' \asymp bW/d \\ w_2' \equiv b w_0 d^{-1} \bmod D/(d,D) }} 1 \notag \\
&\ll \sum_{d \ll bW} d \left ( \frac{bW (\operatorname{ker}(d),D)}{\operatorname{ker}(d)D}\right) \left ( \frac{bW (d,D)}{dD}+1 \right) \ll_{\varepsilon} Q^{\varepsilon} \frac{(bW)^2}{D} \sum_{d \ll bW} \frac{(\operatorname{ker}(d),D)}{\operatorname{ker}(d)}.
\end{align}
Let $d=d_1d_2$, where $p \!\mid\! d_1$ only if $p \!\mid\! D$, and $(d_2,D)=1$. Using the bound $\sum_{n \ll X} \operatorname{ker}(n)^{-1} \ll_{\varepsilon} X^{\varepsilon}$, (see Bruijn \cite{Bruijn}), we arrive at
\begin{align}\label{Breaking d}
\sum_{d \ll bW} \frac{(\operatorname{ker}(d),D)}{\operatorname{ker}(d)} \ll_{\varepsilon} (bW)^{\varepsilon} \sum_{\substack{d_1 \ll bW \\ p \mid d_1 \implies p \mid D }} 1. 
\end{align}
Let $\mathcal{C}\!=\!\{ n \leq X : p\!\mid\! n \text{ only if } p \!\mid\! D \}$ and $\widehat{\mathcal{C}}\!=\!\{ n \leq X : p \!\mid\! n \text{ only if } p \in \{2,3,\dots, p_{\omega(D)} \} \}$, where $p_{\omega(D)}$ is the $\omega(D)$-th prime. There is a canonical injection from $\widehat{\mathcal{C}}$ to $ \mathcal{C}$. Therefore, if $\log X \asymp \log D$, then for any $\delta>0$, $\#\mathcal{C} \leq \# \widehat{\mathcal{C}} \leq \#\mathcal{S}(X, (\log X)^{1+\delta})$ where $\mathcal{S}(X,y)$ denotes the set of $y$-smooth numbers up to $X$. Using estimates for smooth numbers (see Granville \cite[Eq. 1.12]{Granville}) and allowing $\delta>0$ small depending on $\varepsilon$, \eqref{Breaking d} is $\ll_{\varepsilon} (bW)^{\varepsilon}$. Substituting this in \eqref{AP Condition}, the contribution from the second sum in \eqref{Bw sums} is $\ll_{\varepsilon} Q^{\varepsilon}(bW)^2D^{-1}$.  
\par
For the first sum, the condition on $d$ forces $w_1$ and $w_2$ to be equal and therefore, these are diagonal contributions which is also $\ll_{\varepsilon} Q^{\varepsilon}(bW)^2D^{-1}$. Hence the error term from \eqref{Dispersion Term 2} is acceptable if
\begin{align}\label{Rho condition II}
\rho<\frac{2\delta}{3}+\frac{2(1-\omega)-6(5\eta_1+2\eta_2)}{9(2+\omega)}.
\end{align}
\subsubsection{Evaluation of $\mathcal{D}_1$} We have
$$
\mathcal{D}_1:=\underset{\substack{w_1, w_2, n_1, n_2 \\\left(n_j, b w_j\right)=1 \\ n_1 \equiv n_2 \bmod b}} {\sum\nolimits^* } \widetilde{\Theta}_k\left(\frac{w_1}{W}\right) \widetilde{\Theta}_k\left(\frac{w_2}{W}\right) \beta_{n_1} \overline{\beta}_{n_2} \sum_{m n_j \equiv 1 \bmod b w_j } f\left(\frac{m}{M}\right).
$$
We separate the variables $w_1, w_2, n_1, n_2$ from each other, following \cite{FouvryRadziwill2018} and \cite{DPR2023}:
$$
\left\{\begin{aligned}
&d_0=\left(n_1, n_2\right),  \quad  n_1=d_0d_1d_2 v_1 \text { with } d_1 \mid d_0^{\infty}, d_2 \mid D^{\infty} \text { and }\left(d_0D, v_1\right)=1, \quad n_2=d_0 v_2, \\
&q_0=\left(w_1, w_2\right), \quad w_i=q_0 q_i \text { for } i \in\{1,2\} .
\end{aligned}\right.
$$
The coprimality conditions in $\mathcal{D}_1$ imply
$\left(d_0 d_1 d_2v_1, bq_0 q_1\right)=\left(d_0 v_2, bq_0 q_2\right)=1.$ Thus we have
$$
\begin{aligned}
\mathcal{D}_1= & \sum_{(d_0, b)=1} \sum_{d_1 \mid d_0^{\infty}} \sum_{\substack{d_2 \mid D^{\infty} \\ (d_2,b)=1 }} \sum_{\substack{\left(q_0, d_0\right)=1 \\ (q_0,D)=1}} \mathcal{D}_1\left(d_0, d_1, d_2, q_0\right) \\
\mathcal{D}_1(\cdots)= & \underset{\substack{q_1, q_2, v_1, v_2 \\
(d_1d_2 v_1, v_2)=\left(q_1, q_2\right)=1 \\
\left(q_1 q_2, d_0\right)=\left(v_1, d_0D\right)=1 \\
\left(v_1, q_1\right)=\left(v_2, q_2\right)=\left(v_1 v_2, b q_0\right)=1}}{\sum\nolimits^*} \widetilde{\Theta}_k\left(\frac{q_0 q_1}{W}\right) \widetilde{\Theta}_k\left(\frac{q_0 q_2}{W}\right) \beta_{d_0 d_1 d_2 v_1} \overline{\beta}_{d_0 v_2} \sum_{\substack{m d_0 d_1 d_2 v_1 \equiv 1 \bmod b q_0 q_1 \\
m d_0 v_2 \equiv 1 \bmod b q_0 q_2 }} f\left(\frac{m}{M}\right) .
\end{aligned}
$$
Using smooth partitions of unity, we can break the variables into dyadic ranges $d_0 \asymp D_0, d_1 \asymp D_1, d_2 \asymp D_2$ and $ q_0 \asymp Q_0$. The contribution from $d_0 \asymp D_0, d_1 \asymp D_1$ and $d_2 \asymp D_2$ is
\begin{align}\label{Bound 1}
& \ll_{\varepsilon} Q^{\varepsilon} M \sum_{d_0 \asymp D_0} \sum_{\substack{d_1 \mid d_0^{\infty} \\
d_1 \asymp D_1}} \sum_{\substack{d_2 \mid D^{\infty} \\ d_2 \asymp D_2}} \sum_{v_1 \asymp N / d_0 d_1 d_2} \sum_{v_2 \asymp N / d_0}\left|\beta_{d_0 d_1 d_2 v_1}\right|\left|\beta_{d_0 v_2}\right| \notag \\
&\ll_{\varepsilon} Q^{\varepsilon} M N^2 \sum_{d_0 \asymp D_0} \frac{1}{d^2} \sum_{d_1 \mid d_0^{\infty}}  \frac{\tau\left(d_1\right)^{O(1)}}{d_1}\left(\frac{d_1}{D_1}\right)^{1-\varepsilon^2} \sum_{\substack{d_2 \mid D^{\infty}}} \frac{\tau\left(d_2\right)^{O(1)}}{d_2}\left(\frac{d_2}{D_2}\right)^{1-\varepsilon^2} \notag \\ 
&\ll_{\varepsilon} Q^{\varepsilon} M N^2 (D_1D_2)^{-1+\varepsilon^2} D_0^{-1}
\end{align}
where the sum over $q_0$ and $q_1$ was bounded by $\mathcal{O}\left(\tau_3\left(\left|m d_0 d_1 d_2 v_1-1\right|\right)\right)=\mathcal{O}_{\varepsilon}\left(Q^{\varepsilon}\right)$, likewise for the sum over $q_2$ (since $m d_0 v_2 \neq 1$ and $m d_0 d_1 d_2 v_1 \neq 1$ ). This bound is acceptable provided
$
D_0 D_1 D_2 \gg X Q^{-2+4\eta_1+2\eta_2 +\varepsilon},
$
so we assume $D_0 D_1 D_2 \ll X Q^{-2+4\eta_1+2\eta_2 +\varepsilon}$.
The contribution from $q_0 \asymp Q_0$ is
\begin{align}\label{Bound 2}
& \ll_{\varepsilon} Q^{\varepsilon} \sum_{q_0 \asymp Q_0} \sum_{q_1 \asymp Q / q_0} \sum_{\substack{n_1 \equiv n_2 \bmod q_0 \\
n_j \asymp N}} \sum_{\substack{m \asymp M \\
m \equiv \overline{n_1} \bmod q_0 q_1 }} 1 \notag \\
& \ll_{\varepsilon} Q^{\varepsilon} M \sum_{q_0 \asymp Q_0} \sum_{q_1 \asymp Q / q_0} \frac{1}{q_0 q_1} \sum_{\substack{n_1 \equiv n_2 \bmod q_0 \\
n_j \asymp N}} 1 \ll_{\varepsilon} Q^{\varepsilon}\left(M N^2 Q_0^{-1}+MN\right),
\end{align}
where in the first line the sum over $q_2$ was again bounded by $\tau\left(\left|m d_0 v_2-1\right|\right)$. This is acceptable if
$$
N \gg X Q^{-2+4\eta_1+2\eta_2 +\varepsilon} \quad \text { and } \quad Q_0 \gg X Q^{-2+4\eta_1+2\eta_2 +\varepsilon},
$$
so we assume $Q_0 \ll X Q^{-2+4\eta_1+2\eta_2+\varepsilon}$.
We now use Poisson summation, as in \cite[714-716]{D2017}. Let
$$
\widetilde{q}=b q_0 q_1 q_2 \quad \text { and } \quad \mu \equiv\left\{\begin{array}{cc}
\overline{d_0 d_1 d_2 v_1} & \bmod \hspace{0.1cm} b q_0 q_1, \\
\overline{d_0 v_2} & \bmod \hspace{0.1cm} b q_0 q_2 .
\end{array}\right.
$$
With $H=\widetilde{q}^{1+\varepsilon} M^{-1} \ll bW^{2+\varepsilon} /\left(q_0 M\right)$, we get for any fixed $A>0$,
\begin{align}\label{Poisson Step}
\sum_{m \equiv \mu \bmod \widetilde{q}} f\left(\frac{m}{M}\right)=\frac{M}{\widetilde{q}} \sum_{|h| \leq H} \widehat{f}\left(\frac{h M}{\widetilde{q}}\right) \mathrm{e}\left(\frac{\mu h}{\widetilde{q}}\right)+\mathcal{O}_{\varepsilon,A}(Q^{-A}) .
\end{align}
The zero frequency contributes the main term. We sum over $d_0, d_1, d_2$ and $q_0$ (and reintegrate the values $D_0 D_1D_2$ and $Q_0$ larger than $X Q^{-2+4\eta_1+2\eta_2+\varepsilon}$ which were discarded earlier) to obtain
$$
\mathcal{M}_1:=\frac{M}{b} \widehat{f}(0) \underset{\substack{w_1, w_2, n_1, n_2 \\ (n_j, b w_j) =1 \\ n_1 \equiv n_2 \bmod b\left(w_1, w_2\right) }}{\sum\nolimits^*}  \widetilde{\Theta}_k\left(\frac{w_1}{W}\right) \widetilde{\Theta}_k\left(\frac{w_2}{W}\right) \beta_{n_1} \overline{\beta}_{n_2} \frac{1}{\left[w_1, w_2\right]} .
$$
The error term from the reintegration process can be bounded similarly how we obtained \eqref{Bound 1} and \eqref{Bound 2}. Therefore in the ranges $D_0D_1D_2 \gg X Q^{-2+4\eta_1+2\eta_2+\varepsilon}$ and $Q_0 \gg X Q^{-2+4\eta_1+2\eta_2+\varepsilon}$, these errors are again acceptable. Choosing $A>0$ sufficiently large and fixed, the error induced in $\mathcal{D}_1$ from the error in \eqref{Poisson Step} is also acceptable. 
\par
Following \cite{DPR2023}, we now solve the congruence conditions on $\mu$ by writing
$$
d_1 d_2 v_1-v_2=b q_0 t, \quad \mu d_0 d_1d_2 v_1=1+b q_0 q_1 \ell, \quad \mu d_0 v_2=1+b q_0 q_2 m
$$
with $t, \ell, m \in \mathbb{Z}$. We deduce that
$$
\mu d_0t=q_1 \ell-q_2 m, \quad t=q_1 v_2 \ell-q_2 d_1 d_2 v_1 m .
$$
A little calculation shows that
$$
\begin{aligned}
\frac{\mu}{\widetilde{q}}=\frac{\mu}{b q_0 q_1 q_2}\equiv \frac{1}{d_0 d_1 d_2 v_1 b q_0 q_1 q_2}+\frac{d_1 d_2 v_1-v_2}{b q_0} \frac{\overline{q_1 v_2 d_0 d_1 d_2}}{v_1 q_2}-\frac{\overline{b q_0 q_1 v_1 q_2}}{d_0 d_1 d_2} \bmod 1.
\end{aligned}
$$
Then we have
\begin{align}\label{Reciprocity}
\mathrm{e}\left(\frac{h \mu}{\widetilde{q}}\right)=\mathrm{e}\left(h \frac{d_1 d_2 v_1-v_2}{b q_0} \frac{\overline{q_1 v_2 d_0 d_1 d_2}}{v_1 q_2}-\frac{h \overline{b q_0 q_1 v_1 q_2}}{d_0 d_1 d_2}\right)+O\left(\frac{H q_0}{bN W^2}\right).
\end{align}
The error term here is $\ll_{\varepsilon} Q^{\varepsilon} X^{-1}$, which contributes to $\mathcal{D}_1\left(d_0, d_1, d_2,  q_0\right)$ at most
$$
\frac{Q^{2+\varepsilon} N}{X q_0^2 d_0 d_1 d_2}\left(1+\frac{N}{d_0bq_0}\right)
$$
and then after summing over $d_0, d_1, d_2$ and $q_0$, this contributes to $\mathcal{D}_1$ at most $\mathcal{O}_{\varepsilon}\left(Q^{2+\varepsilon} N^2 X^{-1}\right)$. This error is acceptable if $N \ll Q^{2-4\eta_1-2\eta_2-\varepsilon}$. We now insert the first term of \eqref{Reciprocity} in \eqref{Poisson Step}, and write $\widehat{f}$ in terms of the Fourier integral. Therefore after a change of variables, the nonzero frequencies contribute
$$
\begin{aligned}
\mathcal{E}\left(d_0, d_1, d_2, q_0\right) = &\frac{M q_0}{b W^2} \int  \underset{\substack{q_1, q_2, v_1, v_2 \\
\left(d_1 d_2 v_1, v_2\right)=\left(q_1, q_2\right)=1 \\
\left(q_1 q_2, d_0\right)=\left(v_1, d_0 D\right)=1 \\
\left(v_1, q_1\right)=\left(v_2, q_2\right)=\left(v_1 v_2, b q_0\right)=1 \\ d_1d_2 v_1 \equiv v_2 \bmod bq_0}}{\sum\nolimits^*}  \sum_{0<|h| \leq H} \widetilde{\Theta}_k\left(\frac{q_0 q_1}{W}\right) \widetilde{\Theta}_k\left(\frac{q_0 q_2}{W}\right) \beta_{d_0 d_1 d_2 v_1} \overline{\beta}_{d_0 v_2} \\
& \times f\left(t \frac{q_0^2 q_1 q_2}{W^2}\right) \mathrm{e}\left(h \frac{d_1 d_2 v_1-v_2}{b q_0} \frac{\overline{q_1 v_2 d_0 d_1 d_2}}{v_1 q_2}-\frac{h \overline{b q_0 q_1 v_1 q_2}}{d_0 d_1 d_2}\right) \mathrm{e}\left(\frac{-h t M q_0}{b W^2}\right) \mathrm{d} t.
\end{aligned}
$$
If we let
$$
\begin{aligned}
\mathcal{E}_1 :=\sum_{\substack{Q_0, D_0 D_1 D_2 \ll X Q^{-2+4\eta_1+2\eta_2+\varepsilon} \\ Q, D_0, D_1, D_2 \text { dyadic }}} \sum_{\substack{d_0 \asymp D_0, d_1 \asymp D_1 \\ d_2 \asymp D_2, q_0 \asymp Q_0 \\ d_1 \mid d_0^{\infty}, d_2 \mid D^{\infty}}} \mathcal{E}\left( d_0, d_1, d_2, q_0\right), 
\end{aligned}
$$
then so far, under the conditions 
\begin{align}\label{Preliminary N Condition 1}
X Q^{-2+4\eta_1+2\eta_2+\varepsilon} \ll  N \ll  Q^{2-4\eta_1-2\eta_2-\varepsilon},
\end{align}
we have the estimate 
$$
\mathcal{D}_1=\mathcal{M}_1+\mathcal{E}_1+\mathcal{O}_{\varepsilon}\left(N Q^{2-4\eta_1-2\eta_2-\varepsilon}\right).
$$
We now address $\mathcal{E}$. Breaking the sum over $q_1$ and $q_2$ mod $d_0 d_1 d_2$, we rewrite $\mathcal{E}\left(d_0, d_1, d_2, q_0\right)$ as
\begin{align}\label{Rewriting}
\frac{M q_0}{b W^2} \underset{\substack{\lambda_1 \bmod d_0 d_1 d_2 \\ \lambda_2 \bmod d_0 d_1 d_2}}{\sum\nolimits^*} &\int \underset{\substack{v_1, v_2 \\
\left(d_1 d_2 v_1, v_2\right) = \left(v_1, d_0 D \right)=1 \\
\left(v_1 v_2 , b q_0\right)=1 \\ d_1d_2 v_1 \equiv v_2 \bmod bq_0}}{\sum}  \sum_{0<|h| \leq H} \beta_{d_0 d_1 d_2 v_1} \overline{\beta}_{d_0 v_2} \mathrm{e}\left(-\frac{h t M q_0}{b W^2} - \frac{h \overline{b q_0 \lambda_1 v_1 \lambda_2}}{d_0 d_1 d_2}\right)   \notag\\
& \hspace{-1.75cm} \times \underset{\substack{q_1 \equiv \widetilde{\lambda}_1 \bmod [d_0 d_1 d_2,D] \\ q_2 \equiv \widetilde{\lambda}_2 \bmod [d_0 d_1 d_2,D] \\
\left(q_1, q_2 \right)=\left(v_1, q_1\right)=\left(v_2, q_2 \right)=1}}{\sum }  \widetilde{\Theta}_k\left(\frac{q_0 q_1}{W}\right) \widetilde{\Theta}_k\left(\frac{q_0 q_2 }{W}\right) f\left(t \frac{q_0^2 q_1 q_2 }{W^2}\right) \mathrm{e}\left(h \frac{d_1 d_2 v_1-v_2}{b q_0} \frac{\overline{q_1 v_2 d_0 d_1 d_2}}{v_1 q_2}\right) \mathrm{d} t.
\end{align}
We mention that the notation $\mathop{\sum\nolimits^*}_{\lambda_i}$ indicates that the sum is over $\lambda_1,\lambda_2$ coprime to $d_0d_1d_2$. We plan to apply Lemma \ref{New Drappeau's Theorem with Theta} with the following dictionary:
\begin{align*}
&c=q_2, \quad d=q_1, \quad n =h \frac{d_1d_2v_1-v_2}{bq_0}, \quad r = d_0d_1 d_2v_2, \quad s=v_1,\\
&\boldsymbol{C} \sim W/q_0,\quad  \boldsymbol{D} \sim W/q_0, \quad \boldsymbol{N} \sim \frac{NH}{d_0bq_0}, \quad \boldsymbol{R} \sim Nd_1 d_2 , \quad \boldsymbol{S} \sim N/d_0d_1 d_2,\\
&g(c,d,n,r,s) := g(q_1,q_2) = \widetilde{\Theta}_k\left(\frac{q_0q_1}{W}\right) \widetilde{\Theta}_k\left(\frac{q_0q_2}{W}\right) f\left(t \frac{q_0^2 q_1 q_2 }{W^2}\right) ,\\
&b_{n,r,s} = \mathbbm{1}_{\substack{r \equiv 0 \bmod d_0d_1 d_2 \\ d_1 d_2 s \equiv r/(d_0d_1 d_2) \bmod bq_0 \\ n \equiv 0 \bmod (d_1 d_2 s- r/(d_0d_1 d_2)) (bq_0)^{-1}  } }\times \mathbbm{1}_{ \substack{ (d_1 d_2s,r/(d_0d_1 d_2))=1 \\ (s,d_0D)=1 \\ (rs/(d_0d_1d_2), bq_0)=1}}  \notag \\
&\quad \quad \quad  \quad \times \beta_{d_0 d_1 d_2 s} \overline{\beta}_{r/d_1d_2} \mathrm{e}\left( \left (-\frac{t M q_0}{b W^2} - \frac{\overline{b q_0 s\lambda_1  \lambda_2}}{d_0 d_1 d_2}\right) \frac{n bq_0}{d_1d_2s-r/(d_0d_1 d_2)}  \right),\notag   \\
&\varepsilon_1 = 2\eta_1+\varepsilon, \quad \varepsilon_2 =2\eta_1+ \varepsilon, \quad \varepsilon_3,\varepsilon_4,\varepsilon_5 = 0.
\end{align*} 
With the above choices, we rewrite \eqref{Rewriting} as
\begin{align}\label{Our Lemma setup}
\frac{M q_0}{b W^2} \underset{\substack{\lambda_1 \bmod d_0 d_1 d_2 \\ \lambda_2 \bmod d_0 d_1 d_2}}{\sum\nolimits^*} &\int \underset{\substack{c,d,n,r,s \\c \equiv \widetilde{\lambda}_2 \bmod [d_0 d_1 d_2,D] \\ d \equiv \widetilde{\lambda}_1 \bmod [d_0 d_1 d_2,D] \\
(rd [d_0d_1d_2,D],sc)=1}}{\sum}  b_{n,r,s} g(c,d,n,r,s) \mathrm{e} \left ( n \frac{\overline{rd}}{sc} \right)   \mathrm{d} t.
\end{align}
Before estimating our bounds, we make some observations to simplify our calculations. Let $X=Q^2 Y$. Then $Y=Q^{\omega}$. We note the estimates
$$
\boldsymbol{R} \boldsymbol{S} \asymp N^2 D_0^{-1}, \quad \boldsymbol{N} \ll \frac{Q^{\varepsilon} N^2}{YD_0Q_0^2}\ll Q^{\varepsilon} \boldsymbol{R} \boldsymbol{S}, \quad \boldsymbol{C} \ll Q^{\varepsilon} \boldsymbol{R} \boldsymbol{D} \quad \textrm{and} \quad \boldsymbol{CDNRS} \ll \frac{Q^{2+\varepsilon} N^4}{D_0^2Q_0^4} .
$$
Now applying Lemma \ref{New Drappeau's Theorem with Theta}, the quantity inside the integral in \eqref{Our Lemma setup} is
\begin{align}\label{First Bound}
\ll_{\varepsilon} Q^{\varepsilon} \left ( \frac{Q^{2} N^4}{D_0^2Q_0^4} \right)^{48\eta_1} \left(D_0 D_1D_2D\right)^{\frac{3}{2}} K(\boldsymbol{C}, \boldsymbol{D}, \boldsymbol{N}, \boldsymbol{R}, \boldsymbol{S}) \left\|b_{\boldsymbol{N}, \boldsymbol{R}, \boldsymbol{S}}\right\|_2,
\end{align}
where
\begin{align}\label{K bound}
Q^{-\varepsilon} K^2 &\ll_{\varepsilon} Q^2 N^4 D_0^{-1} (DD_1D_2) Q_0^{-2}+Q^{2+4 \theta} N^{4-6 \theta} D_0^{-2+2 \theta} D_1^{-2 \theta} D_2^{-2 \theta} Q_0^{-2-4 \theta}\left(1+\frac{D_0D}{N}\right)^{1-4 \theta} \notag \\
&\quad+Q^2 N^3 Y^{-1} D_0^{-1} D_1D_2 Q_0^{-4}.
\end{align}
To bound the term $\left\|b_{\boldsymbol{N}, \boldsymbol{R}, \boldsymbol{S}}\right\|_2$, we assume that
\begin{align}\label{Preliminary N Condition 2}
X Q^{-2+4\eta_1+2\eta_2+\varepsilon}=o(N)
\end{align}
so that $D_0=o(N)$. Then the case $d_1 d_2 v_1=v_2$ (which only occurs if both expressions are equal to 1) never occurs in $b_{n,r,s}$. Thus we have
\begin{align}\label{B sequence norm Bound}
\left\|b_{\boldsymbol{N}, \boldsymbol{R}, \boldsymbol{S}}\right\|_2^2 & \leq \sum_{\substack{v_1, v_2, h \\
d_1 d_2 v_1 \equiv v_2 \bmod bq_0 \\
0<|h|<H}}\left|\beta_{d d_1 d_2 v_1} \beta_{d v_2}\right|^2 \ll_{\varepsilon} \frac{bQ^{2+\varepsilon}}{Q_0 M} \frac{N}{D_0 D_1 D_2}\left(\frac{N}{bD_0 Q_0}+1\right) \notag \\
& \ll_{\varepsilon} Q^{\varepsilon} b \left(N^3 Y^{-1}D_0^{-2} (D_1D_2)^{-1} Q_0^{-2}+N^2 Y^{-1} (D_0D_1D_2)^{-1}  Q_0^{-1}\right).
\end{align}
Combining \eqref{First Bound}, \eqref{K bound} and \eqref{B sequence norm Bound}, we deduce that the quantity inside the integral in \eqref{Our Lemma setup} is
$$
\ll_{\varepsilon} Q^{\varepsilon} b^{\frac{1}{2}} \left ( \frac{Q^{2} N^4}{D_0^2Q_0^4} \right)^{48\eta_1} \sum_{k=1}^6 Q^{\delta_{1,k}} N^{\delta_{2,k}} Y^{\delta_{3,k}} D^{\delta_{4,k}} D_0^{\delta_{5,k}} D_1^{\delta_{6,k}} D_2^{\delta_{7,k}} Q_0^{\delta_{8,k}}
$$
where $\delta=\left(\delta_{j, k}\right)_{1 \leq j \leq 8, 1 \leq k \leq 6}$ is given by
\[
\delta =
\left(
\begin{array}{rrrrrr}
1 & 1 & 2\theta + 1 & 2\theta + 1 & 1 & 1 \\[3pt]
\tfrac{7}{2} & 3 & \tfrac{7}{2} - 3\theta & 3 - 3\theta & 3 & \tfrac{5}{2} \\[3pt]
-\tfrac{1}{2} & -\tfrac{1}{2} & -\tfrac{1}{2} & -\tfrac{1}{2} & -1 & -1 \\[3pt]
2 & 2 & \tfrac{3}{2} & \tfrac{3}{2} & \tfrac{3}{2} & \tfrac{3}{2} \\[3pt]
0 & \tfrac{1}{2} & \theta - \tfrac{1}{2} & \theta & 0 & \tfrac{1}{2} \\[3pt]
\tfrac{3}{2} & \tfrac{3}{2} & 1 - \theta & 1 - \theta & \tfrac{3}{2} & \tfrac{3}{2} \\[3pt]
\tfrac{3}{2} & \tfrac{3}{2} & 1 - \theta & 1 - \theta & \tfrac{3}{2} & \tfrac{3}{2} \\[3pt]
-2 & -\tfrac{3}{2} & -2\theta - 2 & -2\theta - \tfrac{3}{2} & -3 & -\tfrac{5}{2}
\end{array}
\right).
\]
Note that the integral over $t$ in \eqref{Our Lemma setup} is supported around $\asymp_f 1$. Therefore integrating over $t$, summing over $\lambda_1, \lambda_2$, and multiplying by $M q_0\left(b W^2\right)^{-1}$ which is $\ll_{\varepsilon} Q^{4\eta_1+4\eta_2+\varepsilon} N^{-1} Y Q_0$, we get
\begin{align*}
\mathcal{E}\left(d_0, d_1, d_2, q_0\right) &\ll_{\varepsilon} Q^{4\eta_1+4\eta_2+\varepsilon} \left ( \frac{Q^{2} N^4}{D_0^2Q_0^4} \right)^{48\eta_1} \notag \\
&\quad \times \sum_{k=1}^6 Q^{\delta_{1,k}} N^{\delta_{2,k}-1} Y^{\delta_{3,k}+1} D^{\delta_{4,k}+2} D_0^{\delta_{5,k}+2} D_1^{\delta_{6,k}+2} D_2^{\delta_{7,k}+2} Q_0^{\delta_{8,k}+1}.
\end{align*}
We sum over $d_0, d_1, d_2$ and $q_0$ in dyadic intervals of lengths $D_0, D_1, D_2$ and $Q_0$, obtaining
\begin{align*}
\sum_{\substack{d_0 \asymp D_0, d_1 \asymp D_1 \\ d_2 \asymp D_2, q_0 \asymp Q_0 \\ d_1 \mid d_0^{\infty}, d_2 \mid D^{\infty}}} \mathcal{E}\left( d_0, d_1, d_2, q_0\right) &\ll_{\varepsilon} Q^{4\eta_1+4\eta_2+\varepsilon} \left ( \frac{Q^{2} N^4}{D_0^2Q_0^4} \right)^{48\eta_1}  \notag \\
&\quad \times \sum_{k=1}^6 Q^{\delta_{1,k}} N^{\delta_{2,k}-1} Y^{\delta_{3,k}+1} D^{\delta_{4,k}+2} D_0^{\delta_{5,k}+3} D_1^{\delta_{6,k}+2} D_2^{\delta_{7,k}+2} Q_0^{\delta_{8,k}+2}.
\end{align*}
Now we sum dyadically over $Q_0, D_0,D_1$ and $D_2$, subject to $Q_0+D_0D_1 D_2 \ll Y Q^{4\eta_1+2\eta_2+\varepsilon}$. We get
\begin{align*}
\mathcal{E}_1 &\ll_{\varepsilon} Q^{100\eta_1+4\eta_2+\varepsilon} N^{192 \eta_1} \notag \\
&\quad \times \sum_{k=1}^6 Q^{\delta_{1, k}} N^{\delta_{2,k}-1} D^{\delta_{4,k}+2} Y^{\delta_{3,k}+1} (Y Q^{4\eta_1+2\eta_2})^{\max \left(0, \delta_{8,k}+2-192\eta_1\right)+\max \left(0, \delta_{5,k}+3-96\eta_1, \delta_{6,k}+2, \delta_{7,k}+2\right) }.
\end{align*}
Note that the terms for $k=5,6$ are majorized by the term $k=2$. Furthermore, to make the following computations slightly easier, we use the bound $N \ll X^{\frac{1}{3}} \ll Q^{\frac{3}{4}}$. It follows that
$$
\mathcal{E}_1 \ll_{\varepsilon} Q^{244\eta_1+4\eta_2+\varepsilon} D^4 Q^{4(4\eta_1+2\eta_2)} \sum_{k=1}^4 Q^{\widetilde{\delta}_{1, k}} N^{\widetilde{\delta}_{2,k}} Y^{\widetilde{\delta}_{3,k}} \ll_{\varepsilon} Q^{260\eta_1+16\eta_2+\varepsilon} \sum_{k=1}^4 Q^{\widetilde{\delta}_{1, k}} N^{\widetilde{\delta}_{2,k}} Y^{\widetilde{\delta}_{3,k}},
$$
where $\widetilde{\delta}=(\widetilde{\delta}_{j, k})_{1 \leq j \leq 3, 1 \leq k \leq 4}$ is given by
\[
\widetilde{\delta} =
\left(
\begin{array}{rrrrrr}
1 & 1 & 1+2\theta & 1+2\theta \\[3pt]
\tfrac{5}{2} & 2 & \tfrac{5}{2} - 3\theta & 2 - 3\theta \\[3pt]
4 & \tfrac{9}{2} & \tfrac{7}{2} -\theta & 4-\theta \\
\end{array}
\right).
\]
We conclude that
$$
\mathcal{D}_1=\mathcal{M}_1+\mathcal{O}_{\varepsilon} \left(NQ^{2-4\eta_1-2\eta_2-\varepsilon}\right)
$$
on the condition that 
\begin{align}\label{N Conditions 0}
N \ll Q^{-\varepsilon} \, \min \Big(& 
Q^{\frac{2}{3}-166\eta_1-12\eta_2} Y^{-\frac{8}{3}}, 
Q^{1-264\eta_1-18\eta_2} Y^{-\frac{9}{2}}, \notag\\
& Q^{\frac{2}{3}-\frac{166\eta_1+12\eta_2}{1-2\theta}} Y^{-\frac{7-2 \theta}{3(1-2 \theta)}}, 
Q^{\frac{1-2 \theta}{1-3 \theta} -\frac{264\eta_1+18\eta_2}{1-3\theta}} Y^{-\frac{4-\theta}{1-3 \theta}} 
\Big).
\end{align}
Upon using $\theta \leq \frac{7}{64}$, these conditions are implied by
\begin{align}\label{N Conditions I}
N \ll X^{-\varepsilon} \min \left(X^{\frac{2-9\omega-2(264\eta_1+18\eta_2)}{2(2+\omega)}}, X^{\frac{50-217\omega-96(166\eta_1+12\eta_2)}{75(2+\omega)}}, X^{\frac{50-249\omega-64(264\eta_1+18\eta_2)}{43(2+\omega)}}\right),
\end{align}
along with \eqref{Preliminary N Condition 1} and \eqref{Preliminary N Condition 2}, which implies that
\begin{align}\label{N Conditions II}
X^{\frac{\omega+4\eta_1+2\eta_2}{2+\omega}+\varepsilon} = o(N) \quad \textrm{and} \quad  N \ll  X^{\frac{2-4\eta_1-2\eta_2}{2+\omega}-\varepsilon}.
\end{align}
For clarity, we mention that we have allowed 
$Y$ to attain its maximal value, and consequently 
$\omega$ to be as large as possible, in order to deduce \eqref{N Conditions I} from \eqref{N Conditions 0}.
 \subsubsection{Final Conclusion.} The main terms $\mathcal{M}_1$ and $\mathcal{M}_3$ combine to give us
$$
\begin{aligned}
\mathcal{M}_1-\mathcal{M}_3 & =M \widehat{f}(0) \underset{\substack{w_1, w_2}}{\sum\nolimits^*}  \widetilde{\Theta}_k\left(\frac{w_1}{W}\right) \widetilde{\Theta}_k\left(\frac{w_2}{W}\right) \frac{1}{b\left[w_1, w_2\right] \varphi\left(b\left(w_1, w_2\right)\right)} \notag \\
&\quad \times \sum_{\substack{\chi \text { primitive } \\
\text { cond }(\chi)>R \\
\text { cond }(\chi) \mid b\left(w_1, w_2\right)}} \sum_{\substack{n_1, n_2 \\
\left(n_j, b w_j\right)=1}} \beta_{n_1} \overline{\beta}_{n_2}
\chi\left(n_1\right) \overline{\chi}\left(n_2\right).
\end{aligned}
$$
We use \cite[p. 717]{D2017} with the choice \eqref{Gamma Sequence} to obtain
$$
\left|\mathcal{M}_3-\mathcal{M}_1\right| \ll_{\varepsilon} Q^{\varepsilon} M\left(N+N^2 R^{-2}\right) \ll_{\varepsilon} Q^{\varepsilon}\left(X+N X R^{-2}\right).
$$
This is acceptable provided
\begin{align}\label{N and R conditions}
N \gg X^{\frac{\omega+4\eta_1+2\eta_2}{2+\omega}+\varepsilon} \quad \text { and } \quad R \gg X^{\frac{\omega+4\eta_1+2\eta_2}{2(2+\omega)}+\varepsilon}.
\end{align}
After a little calculation, we see that the conditions \eqref{Rho condition I}, \eqref{Rho condition II}, \eqref{N Conditions I}--\eqref{N and R conditions} are all satisfied if 
\begin{align}
\omega+75\eta_1+6 \eta_2 &< \frac{1}{8}, \label{Prefinal I} \\
\frac{\omega+4\eta_1+2\eta_2}{2+\omega}&<\sigma<\frac{1}{3}-\delta<\frac{1}{3}-\frac{242 \omega+96(166\eta_1+12\eta_2)}{75(2+\omega)}, \label{Prefinal II} \\
\textrm{and} \quad \frac{\omega+4\eta_1+2\eta_2}{2(2+\omega)}&<\rho<\frac{1}{9}-\frac{\omega+(10\eta_1+4\eta_2)}{3(2+\omega)} \label{Prefinal III}.
\end{align}
We therefore conclude the following.
\begin{lem}\label{Dispersion III}
Under the notation and hypotheses of Corollary \ref{Cor: Decomposition}, and assuming \eqref{Prefinal I}--\eqref{Prefinal III}, we have
$$
T_{\mathrm{II}} \ll_{\varepsilon} Q^{1-2\eta_1-\eta_2-\varepsilon} \sqrt{X}.
$$
The implied constant may also depend at most on $\lambda, \delta, \rho$ and $\omega$.
\end{lem}
\subsection{Proof of Lemma \ref{Primes in AP 2}} We now combine Lemmas \ref{Combinatorial Decomposition 1}, \ref{Dispersion I}, \ref{Dispersion II} and \ref{Dispersion III}. First, we choose
\[
\sigma = \frac{\omega+4\eta_1+2\eta_2}{2+\omega}+\varepsilon.
\]
Putting together all the conditions, we see that
\[
\sum_{w \equiv w_0 \bmod D}  \widetilde{\Theta}_k\left(\frac{bvw}{Q}\right) \Delta(b w) \ll_{\varepsilon}  Q^{1-2\eta_1-\eta_2-\varepsilon} \sqrt{X},
\]
if we allow
\begin{align}
\omega+75\eta_1+6 \eta_2 &< \frac{1}{8}, \label{final I} \\
\frac{\omega+8\eta_1+6\eta_2}{3(2+\omega)}&< \lambda < \frac{1}{6}-\frac{\omega+4\eta_1+4\eta_2}{2(2+\omega)}, \label{final II} \\
\frac{242 \omega+96(166\eta_1+12\eta_2)}{75(2+\omega)} &< \delta < \frac{1}{12}-\frac{\omega+197\eta_1+9\eta_2}{2(2+\omega)} , \label{final III} \\
\textrm{and} \quad \frac{\omega+4\eta_1+2\eta_2}{2(2+\omega)}&<\rho<\frac{1}{9}-\frac{\omega+(10\eta_1+4\eta_2)}{3(2+\omega)} \label{final IV}.
\end{align}
The conditions on $R$ in the statement of Lemma \ref{Primes in AP 2} ensure that \eqref{final IV} holds. The remaining conditions hold under the constraint 
\[
-o(1) \leq \omega < \widetilde{c}(\eta_1,\eta_2) - o(1).
\]
This completes the proof. \qed
\section{Proofs of Theorem \ref{Main Theorem A} and \ref{Main Theorem C}}\label{sec: Proof of Theorem A: Final Details}
We now present the proof of Theorem \ref{Main Theorem A}.
\begin{proof}[Proof of Theorem \ref{Main Theorem A}]
 It suffices to consider $D= \lfloor Q^{\eta_2} \rfloor$. Similar to the proof of Theorem \ref{Main Theorem}, we write the left hand side of \eqref{Main Theorem Inequality A} as $\mathcal{S}_1+\mathcal{S}_2$, where
\begin{align*}
\mathcal{S}_1 &= \sum_{q \in \mathcal{Q}(a,D)} \Phi_{1} \left( \frac{q}{Q}\right)\sum_{\substack{\chi\bmod q\\ \chi\:  \textup{primitive}}}\sum_{\gamma_{\chi}} \phi \left ( \frac{\log Q}{2\pi}\gamma_{\chi}\right), \notag \\
\textrm{and} \quad \mathcal{S}_2 &= \sum_{q \in \mathcal{Q}(a,D)} \Phi_{2} \left( \frac{q}{Q}\right)\sum_{\substack{\chi\bmod q\\ \chi\:  \textup{primitive}}}\sum_{\gamma_{\chi}} \phi \left ( \frac{\log Q}{2\pi}\gamma_{\chi}\right).
\end{align*}
where $\Phi_1(t)$ and $\Phi_2(t)$ are as in \eqref{Breaking H}. The estimate for $\mathcal{S}_1$ follows from Lemma \ref{Main Theorem Inequality Twisted}. For $\mathcal{S}_2$, we have
\begin{align}\label{S2 Case}
\mathcal{S}_2 &\ll \sum_{q \in \mathcal{Q}(a,D)} \bigg \lvert \sum_{\lvert k \rvert >K} b(k) \mathrm{e}\left ( \frac{kq}{Q}\right)\bigg \rvert \Psi \left( \frac{q}{Q}\right)\sum_{\substack{\chi\bmod q\\ \chi\:  \textup{primitive}}}\sum_{\gamma_{\chi}} \phi \left ( \frac{\log Q}{2\pi}\gamma_{\chi}\right) \notag \\
& \ll \frac{Q^{2\eta_1}}{K}  \sum_{q \in \mathcal{Q}(a,D)} \Psi \left( \frac{q}{Q}\right)\sum_{\substack{\chi\bmod q\\ \chi\:  \textup{primitive}}}\sum_{\gamma_{\chi}} \phi \left ( \frac{\log Q}{2\pi}\gamma_{\chi}\right)
\end{align}
The sum over $q$ in \eqref{S2 Case} can be treated exactly how we established Lemma \ref{Main Theorem Inequality Twisted}. Since $K = Q^{2\eta_1+\varepsilon}$, we see that $\mathcal{S}_{2} \ll_{\varepsilon} Q^{2-\eta_2-\varepsilon}.$ Putting together $\mathcal{S}_1$ and $\mathcal{S}_2$, we arrive at
\begin{align}\label{Final Equality 2}
\mathcal{S}_{1}+\mathcal{S}_2 = \widehat{\phi}(0) \sum_{q \in \mathcal{Q}(a,D)}  \Phi_{1} \left( \frac{q}{Q}\right) \sum_{\substack{\chi\bmod q\\ \chi\: \textup{primitive}}}1+o(Q^{2-\eta_2}),
\end{align}
as $Q \to \infty$. Noting that 
\[
\sum_{q \in \mathcal{Q}(a,D)} \Phi_2 \left ( \frac{q}{Q}\right) \sum_{\substack{\chi\bmod q\\ \chi\: \textup{primitive}}}1\ll_{\varepsilon} Q^{2-\eta_2-\varepsilon},
\]
it follows that the left hand side of \eqref{Main Theorem Inequality A} is equal to 
\begin{align*}
\widehat{\phi}(0) \sum_{q \in \mathcal{Q}(a,D)}  \Phi \left( \frac{q}{Q}\right) \sum_{\substack{\chi\bmod q\\ \chi\: \textup{primitive}}}1+o(Q^{2-\eta_2}),
\end{align*}
which completes the proof of Theorem \ref{Main Theorem A}.
\end{proof}
Following the arguments from \cite[p. 808, Proof of Corollary 2]{DPR2023}, we now prove Theorem \ref{Main Theorem C}.
\begin{proof}[Proof of Theorem \ref{Main Theorem C}]
Assume GRH. Let \(\eta_1, \eta_2 \geq 0\) be fixed satisfying \eqref{Eta 1-2 Constraint C}. Let $\widetilde{c}(\eta_1,\eta_2)$ be the constant provided by Theorem \ref{Main Theorem A}. Let $0<\kappa <\widetilde{c}(\eta_1,\eta_2)$ be fixed and $\lambda>1$ be small enough that $\kappa^{\prime}:=2(\lambda-1)+\lambda \kappa \in\left(0, \widetilde{c}(\eta_1,\eta_2)\right)$ as well. We define
$$
\widetilde{\phi}(x)=\lambda\left(\frac{\sin \pi(2+\kappa') x}{\pi(2+\kappa') x}\right)^2 \quad \textrm{ and } \quad \phi=\widetilde{\phi} * u,
$$
where $u$ is a smooth, positive approximation of unity such that $\phi(0) \geq \lambda^{-1} \widetilde{\phi}(0)=1$. Since
$$
1-\sum_{\gamma_\chi} \phi\left(\frac{\log Q}{2 \pi} \gamma_\chi\right) \leq \mathbbm{1}_{L\left(\frac{1}{2}, \chi\right) \neq 0},
$$
using Theorem \ref{Main Theorem A}, we see that for $Q$ sufficiently large in terms of $\eta_1,\eta_2$ and $\varepsilon$,
\begin{align}\label{Main Theorem Inequality C2}
\sum_{q \equiv a \bmod D} \Psi \bigg(\frac{q}{Q}\bigg) \mathcal{H}_{Q^{\eta_1}} &\bigg( \frac{q}{Q}\bigg) \sum_{\substack{\chi\bmod q\\ \chi\:  \textup{primitive}\\ L(\frac{1}{2}, \chi)\neq 0}}1 \notag \\
&\geq (1-\frac{\lambda}{2+\kappa'}-\varepsilon) \sum_{q \equiv a \bmod D} \Psi \bigg( \frac{q}{Q}\bigg) \mathcal{H}_{Q^{\eta_1}} \bigg( \frac{q}{Q}\bigg) \sum_{\substack{\chi\bmod q\\ \chi\: \textup{primitive}}}1.
\end{align}
Choosing $\kappa = \widetilde{c}(\eta_1,\eta_2)-\varepsilon$ and
\begin{align}\label{c_3 definition}
c^{*}(\eta_1,\eta_2) = \frac{1}{2}+\frac{\widetilde{c}(\eta_1,\eta_2)}{2+\widetilde{c}(\eta_1,\eta_2)} = \frac{50 - 1093(86\eta_1 + 6\eta_2)}{2\big(2236 - 1093(86\eta_1 + 6\eta_2)\big)},
\end{align}
and readjusting $\varepsilon$, the desired conclusion follows.
\end{proof}
\section{Acknowledgements}
The author is grateful to Martin \v{C}ech, Kaisa Matomäki, Kunjakanan Nath, Kyle Pratt, Igor Shparlinski, Jesse Thorner and Alexandru Zaharescu for many useful comments and discussions on the subject of the paper.
\printbibliography
\end{document}